    \author{Manuel M. M{\"u}ller$^\ast$, Yuetian Luo$^\dagger$ and Rina Foygel Barber$^\ddagger$\\ \\
$^\ast$\textit{Statistical Laboratory, University of Cambridge}\\
$^\dagger$\textit{Data Science Institute, University of Chicago}\\
$^\ddagger$\textit{Department of Statistics, University of Chicago}
}
    \title{Are all models wrong?\linebreak Fundamental limits in distribution-free empirical model falsification}
    \newenvironment{keywords}
    {\bgroup\leftskip 20pt\rightskip 20pt \small\noindent{\bfseries
    Keywords:} \ignorespaces}%
    {\par\egroup\vskip 0.25ex}
    \newcommand{\acks}[1]{\section*{Acknowledgments}#1}
        \newcommand{\one}{\mathbbm{1}}
        \newtheorem{theorem}{Theorem}
        \newtheorem{lemma}[theorem]{Lemma}
        \newtheorem{corollary}[theorem]{Corollary}
        \newtheorem{definition}[theorem]{Definition}
        \newtheorem{proposition}[theorem]{Proposition}
\newcommand{\R}{\mathbb{R}}
\renewcommand{\P}{\mathbb{P}}
\newcommand{\E}{\mathbb{E}}
\newcommand{\Fcal}{\mathcal{F}}
\newcommand{\Dcal}{\mathcal{D}}
\newcommand{\Qcal}{\mathcal{Q}}
\newcommand{\Zcal}{\mathcal{Z}}
\newcommand{\bX}{\mathbf{X}}
\newcommand{\bx}{\mathbf{x}}
\newcommand{\bY}{\mathbf{Y}}
\newcommand{\bI}{\mathbf{I}}
\newcommand{\Unif}{\mathrm{Unif}}
\newcommand\Fpwc{\Fcal_{\mathrm{pwc}}}
\newcommand\Flin{\Fcal_{\mathrm{lin}}}
\newcommand{\sample}{\Dcal}
\newcommand{\dtv}{\mathrm{d}_{\mathrm{TV}}}
\newcommand{\RPF}{R_P(\Fcal)}
\newcommand{\RhatFn}{\hat{R}(\Fcal,\sample_n)}
\newcommand{\RhatFnB}{\hat{R}(\Fcal,\sample_n;B)}
\newcommand{\RhatFN}{\hat{R}(\Fcal,\sample_N)}
\newcommand{\iidsim}{\stackrel{\mathrm{iid}}{\sim}}
\NewExpandableDocumentCommand{\mytextsuperscript}{m}{\raisebox{1.25ex}{\scriptsize #1}}
\newcommand{\labeltext}[2]{%
  \@bsphack
  \MakeLinkTarget*{#1}%
  \def\@currentlabel{#1}{\label{#2}}%
  \@esphack
}
\date{\today}
\begin{document}

\maketitle

\begin{abstract}
    In statistics and machine learning, when we train a fitted model on available data, we typically want to ensure that we are searching within a model class that contains at least one accurate model---that is, we would like to ensure an upper bound on the \emph{model class risk} (the lowest possible risk that can be attained by any model in the class). However, it is also of interest to establish lower bounds on the model class risk, for instance so that we can determine whether our fitted model is at least approximately optimal within the class, or, so that we can decide whether the model class is unsuitable for the particular task at hand. 
    Particularly in the setting of interpolation learning where machine learning models are trained to reach zero error on the training data, we might ask if, at the very least, a positive lower bound on the model class risk is possible---or are we unable to detect that ``all models are wrong''?
    In this work, we answer these questions in a distribution-free setting by establishing a model-agnostic, fundamental hardness result for the problem of constructing a lower bound on the best test error achievable over a model class, and examine its implications on specific model classes such as tree-based methods and linear regression.
\end{abstract}

\begin{keywords}%
  Distribution-free Inference, Risk Bounds, Interpolation Learning %
\end{keywords}

\section{Introduction}\label{sec:intro}

A central goal in machine learning and statistics is to learn a rule that captures properties of interest of an unknown, data-generating distribution. For instance, in a regression setting, we might seek to identify a function whose output can accurately predict the response given an observed covariate. Typically in such a setting, we do not allow the fitted function to take any arbitrary form, but rather require it to be chosen from a (parametric or non-parametric) family of functions, a so-called \emph{model class}, $\Fcal$. Such restrictions to certain model classes can have many different motivations, such as encoding prior information, aiding interpretability, or allowing computationally efficient selection of a function. Indeed, any specific choice of analysis method restricts our model class in some way. This restriction may be explicit---for instance, with linear regression, we clearly limit ourselves to linear functions. But such a restriction can also be more implicit, such as through the choice of a specific neural network architecture, which is known to restrict the set of functions that can be represented, or the employed method used to eventually select a rule from the model class \citep{devore2021neural, petrova2023limitations}. 

Choosing a model class $\Fcal$ involves trading off between multiple considerations. A model class $\Fcal$ that is too constrained might mean that there is no good model $f\in\Fcal$ (all models in the class are too simple to capture the true distribution). On the other hand, a model class $\Fcal$ that is too complex may lead to challenges in selecting a good model $f\in\Fcal$, if the available sample size is too small. In statistical learning theory, this is often referred to as the \emph{approximation--estimation} tradeoff \citep{shalev2014understanding}. Moreover, an overly rich model class $\Fcal$ can also lead to computational constraints, where searching within $\Fcal$ is too costly, leading to an approximation--estimation--computation tradeoff \citep{bottou2007tradeoffs}. 

With this tradeoff in mind, we may ask the following question: after using our observed data to select a model $\hat{f}\in\Fcal$, can we determine whether this selected model $\hat{f}$ is, at least approximately, optimal within this model class $\Fcal$? This question has deep practical importance---if  we are dissatisfied with $\hat{f}$'s accuracy (say, for predicting future data), then if we determine that $\hat{f}$ is nearly optimal within $\Fcal$, we will need to consider switching to a richer class of models than $\Fcal$; on the other hand, if $\hat{f}$ is far from optimal within $\Fcal$, then we might simply need to gather more data in order to find a better model in the same class. 

To make this question more precise, let $R_P(f)$ denote the risk of a model relative to the (unknown) data distribution $P$---for example, the misclassification probability, in the context of a classification problem. Then we would like to determine whether the quantity $R_P(\hat{f}) - \inf_{f\in\Fcal}R_P(f)$ (sometimes referred to as the \emph{estimation error} \citep{devroye2013probabilistic}) is approximately zero, without relying on untestable assumptions on $P$. In order to do so, let us examine the feasibility of estimating both quantities: $R_P(\hat{f})$, the risk of our fitted model, and $\inf_{f\in\Fcal}R_P(f)$, which we will refer to as the \emph{model class risk}. Indeed, the difficulty of estimation is very different for these two quantities:
\begin{itemize}
    \item Estimating $R_P(\hat{f})$ has a simple solution: by training $\hat{f}$ on only part of the available data, we may simply use the remaining data as a holdout set to provide an estimate of $R_P(\hat{f})$.
    \item Estimating $\inf_{f\in\Fcal}R_P(f)$, on the other hand, is potentially quite challenging. Particularly in an overparameterized regime where a sample of size $n$ can be interpolated by the model class, minimizing the empirical risk does not necessarily provide a reliable estimate of the true model class risk $\inf_{f\in\Fcal}R_P(f)$.
\end{itemize}
While the former question is hence easy to address with a holdout set, in this paper we investigate the latter question. Is it possible to provide nontrivial bounds on the model class risk $\inf_{f\in\Fcal}R_P(f)$, without placing assumptions on $P$---that is, in a distribution-free setting?

\paragraph{Outline.} The remainder of this paper is organized as follows. 
Section~\ref{sec:background} defines the questions of interest in more detail, and provides background and definitions.
In Section~\ref{sec:extremes}, we present our results for two regimes, where the model class $\Fcal$ is ``low-complexity'' or ``high-complexity'', while Section~\ref{sec:boundary} examines examples lying in an interesting in-between regime. We conclude with a discussion in Section~\ref{sec:discussion}, including connections to related literature. Most proofs are deferred to the Appendix.

\section{Problem formulation and background}\label{sec:background}
In this section, we introduce some notation to formalize our questions, and give an overview
of our contributions on the problem of inference
 on the model class risk.
\subsection{Setting and notation}\label{sec:setting_and_notation}
Let $P$ denote the unknown distribution on the data space $\Zcal$, and let $\ell: \Fcal\times\Zcal\to[0,\infty)$ denote a loss function, where $\Fcal$ is the model class of interest. As a canonical example, in the setting of a binary classification problem, we might have $\Zcal = \R^d\times\{0,1\}$, where a data point $Z=(X,Y)$ consists of features $X\in\R^d$ and a binary response $Y\in\{0,1\}$, and we may then use the zero/one loss, which is defined as $\ell(f,Z) = \ell(f,(X,Y)) = \one\{f(X)\neq Y\}$. 

Given a choice of loss function $\ell$, we define the risk of a particular model $f$ as 
\[R_P(f) = \E_P[\ell(f,Z)],\]
\sloppy and define the model class risk as $\RPF:=\inf_{f\in\Fcal}R_P(f)$. Given a data set $\sample_n = (Z_1,\dots,Z_n) \in\Zcal^n$, the empirical risk is defined as
\[\hat{R}(f, \sample_n) = \frac{1}{n}\sum_{i=1}^n \ell(f,Z_i),\]
and we also write $\RhatFn := \inf_{f\in\Fcal}\hat{R}(f,\sample_n)$ to denote
the lowest risk achieved on the sample over all possible models $f\in\Fcal$.

In the regression setting with data points $Z_i=(X_i,Y_i)$, it is common to say that $\Fcal$ can \emph{interpolate} the observed data $\sample_n$ if we can find some $f\in\Fcal$ with $f(X_i)=Y_i$ for all $i\in[n]:=\{1,\dots,n\}$; in this work, we will use this term more broadly and will say that $\Fcal$ interpolates the data whenever 
$\RhatFn=0$, even though our work is not restricted to regression problems.

\subsection{Key questions: lower bounds and upper bounds}\label{sec:upper_lower}
As highlighted above, we are interested in performing distribution-free inference on the model class risk $\RPF$, which we formalize as follows. 
\begin{definition}\label{def:L_U_DF}
    Fix $\alpha \in (0,1)$, $n\geq 1$, and model class $\Fcal$. We say that $\hat{L}_\alpha(\Fcal, \cdot):  \Zcal^n \rightarrow [0, \infty]$ is a valid distribution-free lower bound on the model class risk of $\Fcal$ if 
    \begin{equation}\label{eqn:lowerbd_def:L_U_DF}
        \P_P\Bigl\{\RPF \geq \hat{L}_\alpha(\Fcal, \sample_n)\Bigr\} \geq 1 - \alpha\textnormal{\quad for all distributions $P$ on $\Zcal$,}
    \end{equation}
    and similarly, 
    $\hat{U}_\alpha(\Fcal, \cdot): \Zcal^n \rightarrow [0, \infty]$ is a valid distribution-free upper bound on the model class risk of $\Fcal$ if 
    \[
        \P_P\Bigl\{\RPF \leq \hat{U}_\alpha(\Fcal, \sample_n)\Bigr\} \geq 1 - \alpha\textnormal{\quad for all distributions $P$ on $\Zcal$,}\]
    where $\P_P\{\cdot\}$ denotes that the probability is computed with respect to $\sample_n\sim P^n$, i.e., a sample of size $n$ drawn i.i.d.\ from $P$.\footnote{We may also allow randomization in our lower and upper bounds. Formally, for the lower bound (and similarly for the upper bound), we define $\hat{L}_\alpha(\Fcal,\cdot,\cdot):\Zcal^n\times[0,1]\to[0,\infty]$, where $\hat{L}_\alpha(\Fcal,\sample_n,\xi)$ now depends also on a random seed $\xi$, and the probability in~\eqref{eqn:lowerbd_def:L_U_DF} is now computed with respect to both $\sample_n\sim P^n$ and $\xi\sim\textnormal{Unif}[0,1]$. All our results hold for both deterministic and randomized bounds, but for simplicity we suppress the random seed $\xi$ in our notation.}
\end{definition}

While the questions of obtaining lower and upper bounds appear symmetric in the above definition, in fact they are substantially different. This is because the target of inference is an infimum, $\RPF = \inf_{f\in\Fcal}R_P(f)$. Therefore, a valid upper bound on $\RPF$ can be obtained by simply bounding $R_P(f)$ for any \emph{single} choice of $f\in\Fcal$. For example, a common strategy would be to partition the available data $\sample_n$ into two independent subsets, $\sample_{n/2} = (Z_1,\dots,Z_{\lceil n/2\rceil})$ and $\sample_{n/2}' = (Z_{\lceil n/2\rceil+1},\dots,Z_n)$, then train a model $\hat{f}\in\Fcal$ using $\sample_{n/2}$, and finally use $\sample_{n/2}'$ as a holdout set to provide an unbiased estimate $\hat{R}(\hat{f}, \sample_{n/2}')$ of the risk $R_P(\hat{f})$ (and we can also construct an upper bound on $R_P(\hat{f})$, via concentration arguments). 

On the other hand, a lower bound on $\RPF$ is valid only if it bounds $R_P(f)$ simultaneously for \emph{all} $f\in\Fcal$, which is therefore a much more challenging task. The central aim of this paper is to explore the problem of constructing valid distribution-free lower bounds for $\RPF$. Of course, we may simply take $\hat{L}_\alpha(\Fcal,\cdot)\equiv 0$ to ensure validity, but this is not an informative lower bound. If we allow randomization in our answer, moreover, we can even provide a trivial solution without always returning zero: since a valid lower bound can have error $\alpha$ according to Definition~\ref{def:L_U_DF}, we may return a trivial lower bound 
\begin{equation}\label{eqn:trivial_alpha}\hat{L}_\alpha(\Fcal,\sample_n) = \begin{cases} 0, & \textnormal{ with probability $1-\alpha$},\\ \infty, & \textnormal{ with probability $\alpha$.}\end{cases}\end{equation}
In other words, any meaningful answer to this question must have $\P_P\big\{\hat{L}_\alpha(\Fcal,\sample_n)>0\} $ substantially larger than $\alpha$; we will therefore refer to any lower bound with $\P_P\big\{\hat{L}_\alpha(\Fcal,\sample_n)>0\} \leq \alpha + \mathrm{o}(1)$ as \emph{trivial}.
Thus, the central question of this work is to determine whether it is possible to construct a lower bound on $\RPF$ that is always valid, and is also nontrivial, to ensure that it is more informative than the meaningless solution constructed in~\eqref{eqn:trivial_alpha}.

\subsection{Overview of contributions}\label{sec:contribution}
We will now turn to the question of constructing valid distribution-free lower bounds for the model class risk.
As a special case, we can ask when it is possible to verify that $\RPF>0$, i.e., that there is no perfect model in the class. This aim recalls George E.~P.~Box's often-quoted claim that, in statistics, ``all models are wrong'' \citep{box1976science}. While this well-known quotation is referring to the idea that any practical choice of the model class $\Fcal$ must be wrong in terms of what it implies about the data distribution,\footnote{In other words, \cite{box1976science} uses the term ``model'' to refer to a class of functions $\Fcal$, such as ``the linear model'', while in this paper we use ``model'' to refer to a single $f\in\Fcal$.} here we interpret the question in a different way: given a fixed model class $\Fcal$, can we determine whether all models in the class are ``wrong'' in the sense that they do not lead to a zero risk?

As a starting point, consider the random variable
$\RhatFn = \inf_{f\in\Fcal}\hat{R}(f,\sample_n)$.
In expectation, this quantity is an underestimate of the target, since
\begin{equation}\label{eqn:lower_bound_E}\E[\RhatFn] = \E\left[\inf_{f\in\Fcal}\hat{R}(f,\sample_n)\right] \leq \inf_{f\in\Fcal}\E\left[\hat{R}(f,\sample_n)\right] = \inf_{f\in\Fcal}R_P(f) = \RPF.\end{equation}
Therefore we might expect that the empirical model class risk $\RhatFn$ could be a useful ingredient for constructing a distribution-free lower bound for $\RPF$. But if $\Fcal$ is a high-complexity model class and can interpolate the training data, we will have $\RhatFn=0$. We may believe that this is due solely to the high complexity of the model class $\Fcal$---but would it ever be possible to be confident that $\RPF>0$ in such a setting, or is $\hat{L}_\alpha(\Fcal,\sample_n)=0$ the only valid lower bound in this regime? More generally, is it ever possible to have a valid lower bound $\hat{L}_\alpha(\Fcal,\sample_n)$ that is higher than the empirical model class risk $\RhatFn$?

In this paper we find that the problem can be characterized by three regimes, which are defined via the \emph{interpolation capacity} of the model class $\Fcal$---the largest sample size $N$ for which $\RhatFN=0$. Of course, the definitions and results will be formalized below, but here we give an overview of our findings:
\begin{enumerate}[(i)]
    \item \textbf{The low-complexity regime.} If $\Fcal$ is not able to interpolate our data (i.e.,~$\RhatFn > 0$), then we can always construct a valid distribution-free lower bound $\hat{L}_\alpha(\Fcal,\sample_n)>0$.
    \item \textbf{The high-complexity regime.} At the other extreme, consider a highly complex model class $\Fcal$, which is able to interpolate not just our $n$ training points but is in fact able to interpolate a data set of\footnote{For $a, b \geq 0$, we write $a \gg b$ in informal descriptions of our results whenever we mean that $b/a = \mathrm{o}(1)$.} size $\gg n^2$. Then any valid distribution-free lower bound must necessarily be trivial, with $\P\{\hat{L}_\alpha(\Fcal,\sample_n)>0\}\leq \alpha + \mathrm{o}(1)$.
    \item \textbf{The in-between regime.} Finally, we find an interesting regime in between these two extremes. If $\Fcal$ interpolates a data set of size $n$, but its interpolation capacity is $\mathcal{O}(n^2)$, then the question does not have a general answer: we will see specific examples of model classes with qualitatively different behavior within this regime. Counterintuitively, it is sometimes possible to verify that $\RPF > 0$ (``all models are wrong'') even when $\RhatFn=0$ (i.e., the empirical risk does not provide any evidence that ``all models are wrong'').
\end{enumerate}
These three regimes are illustrated in Figure~\ref{fig:tikz_intro}. 

\begin{figure}[t]
    \centering
    \fbox{\begin{tikzpicture}

    \draw[thick, ->] (0,0) -- (12, 0);
    \draw[thick] (0, -0.15) -- (0, 0.15) node[anchor=north, yshift = -10.8] {$0$};
    \draw[thick] (4, -0.15) -- (4, 0.15) node[anchor=north, yshift = -10.8] {$n$};
    \draw[thick] (8, -0.15) -- (8, 0.15) node[anchor=north, yshift = -7] {$n^2$};

    \node[align = center] at (2,.5) {Low-complexity regime};
    \node[align = center] at (6,.5) {In-between regime};
    \node[align = center] at (10,.5) {High-complexity regime};

    \draw[thick, ->, dotted] (2,-0.1) -- (2,-1.1);
    \node[align = center] at (2.2, -1.8) {A valid $\hat{L}_\alpha(\Fcal,\Dcal_n)>0$\\always exists (see Corollary~\ref{cor:low_complexity_interpolation_capacity})};

    \draw[thick, ->, dotted] (6,-0.1) -- (6,-2.7);
    \node[align = center] at (6, -3.3) {Existence of a valid and nontrivial $\hat{L}_\alpha(\Fcal,\Dcal_n)$ \\ depends on $\Fcal$ (see Section~\ref{sec:boundary})};
    
    \draw[thick, ->, dotted] (10,-0.1) -- (10,-0.85);
    \node[align = center] at (10, -1.75) {Any valid $\hat{L}_\alpha(\Fcal,\Dcal_n)$ must be\\ trivial, with $\P(\hat{L}_\alpha(\Fcal,\Dcal_n)>0)$\\ $\leq \alpha + \mathrm{o}(1)$ (see Corollary~\ref{cor:high_complexity_interpolation_capacity})};

    \node[align = center] at (-1.5, 0) {Interpolation\\capacity of $\Fcal$};

    \node[align = center] at (-1.6, -2.5) {Our results};
    \draw [decorate,decoration={brace,amplitude=10pt}, thick, rotate=90](-3.9, 0.2) -- (-1.1, 0.2);

\end{tikzpicture}}
    \caption{A schematic depiction of the role of the complexity of $\Fcal$ (see Section~\ref{sec:contribution} for discussion).}
    \label{fig:tikz_intro}
\end{figure}
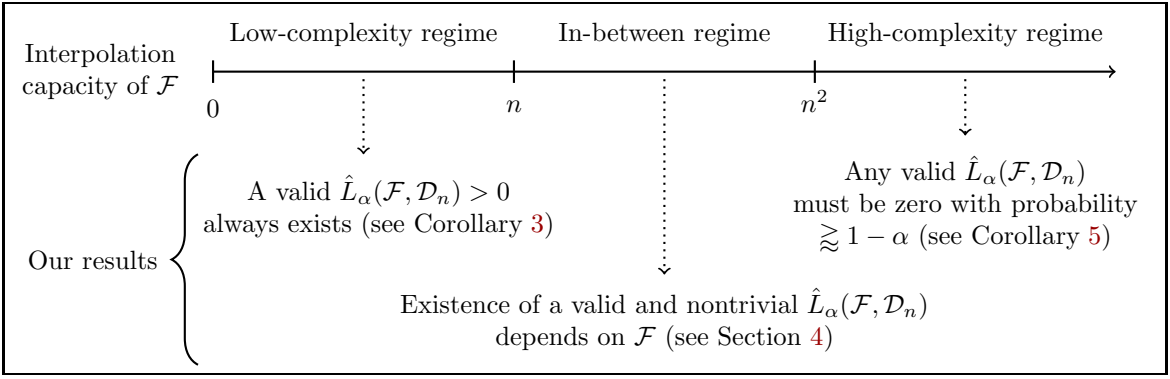

\section{Information-theoretic extremes: the low-complexity and high-complexity regimes}\label{sec:extremes}
In this section, we will develop theoretical results 
to determine whether it is possible to construct a meaningful and valid
lower bound on $\RPF$.

\subsection{Defining interpolation capacity}
As summarized in Figure~\ref{fig:tikz_intro}, the low-complexity
and high-complexity regimes are characterized by the \emph{interpolation capacity}
of the model class $\Fcal$---informally, how large of a sample can be interpolated
by some $f\in\Fcal$. Before presenting our theoretical
results, we begin by formalizing this measure of the complexity of $\Fcal$. These definitions are closely related to complexity measures such as VC dimension and fat-shattering dimension that appear in the statistical learning theory literature \citep{devroye2013probabilistic}.

Given a model class $\Fcal$ and a distribution $P$ on $\Zcal$ (and some fixed
choice of the
loss function $\ell$), we define 
\[N(\Fcal,P) = \sup\left\{n: \RhatFn=0\textnormal{ $P$-almost surely}\right\}.\] 
It will also be useful to define a related quantity, 
\[N_+(\Fcal,P) = \sup\bigg\{ n: \P_P\big\{\RhatFn = 0\big\} >0\bigg\}.\]
In other words, for a sample size $n$, if $n\leq N(\Fcal,P)$ then 
we will \emph{always} observe zero empirical risk, $\RhatFn=0$,
while if $n\leq N_+(\Fcal,P)$ then we \emph{may} observe zero empirical risk.

By definition, we have $N(\Fcal,P)\leq N_+(\Fcal,P)$---and in fact, in many 
common examples, these two measures of interpolation capacity are equal
or approximately equal. For instance,
in a regression setting with data points $(X_i,Y_i)\in\R^d\times\R=:\Zcal$,
    if $\Fcal$ is the class of all linear models (without an intercept), then $N(\Fcal,P)=N_+(\Fcal,P)=d$ for any distribution $P$ with a density on $\R^d\times\R$. 
For this reason, when discussing our results, we treat these two different measures of interpolation capacity
as essentially interchangeable, for the purpose of intuition.\footnote{There are however also cases in which $N(\Fcal, P) \ll N_+(\Fcal,P)$---for instance, consider the classification setting from Section~\ref{sec:setting_and_notation} with $\Fcal$ containing all constant functions on $\R^d$ and $P$ being a distribution of $(X, Y)$ on $\R^d \times \{0,1\}$ such that $\P_P(Y = 1|X)$ is in $(0, 1)$ almost surely. Then, $N(\Fcal, P) = 1$, while $N_+(\Fcal, P) = \infty$.}

\subsection{A lower bound via the empirical risk for the low-complexity regime}
We will now see that the empirical risk can 
provide a distribution-free lower bound on the model class risk $\RPF$.
\begin{theorem}\label{thm:low_complexity}
Fix $\alpha\in(0,1)$, $n\geq1$, and model class $\Fcal$. 
Then
\[\hat{L}_\alpha^{\mathrm{ERM}}(\Fcal,\sample_n) := \alpha\cdot \RhatFn\]
is a valid distribution-free lower bound on $\RPF$. 
\end{theorem}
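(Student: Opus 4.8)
The plan is to reduce the claim to a single application of Markov's inequality to the nonnegative random variable $\RhatFn$. By Definition~\ref{def:L_U_DF}, it suffices to show that $\P_P\{\RPF \geq \alpha\cdot\RhatFn\} \geq 1-\alpha$ for every distribution $P$ on $\Zcal$, or equivalently that $\P_P\{\alpha\cdot\RhatFn > \RPF\} \leq \alpha$. Since the loss $\ell$ takes values in $[0,\infty)$, the quantity $\RhatFn = \inf_{f\in\Fcal}\hat{R}(f,\sample_n)$ is a well-defined, $[0,\infty)$-valued (measurable) function of $\sample_n$, and its expectation $\E[\RhatFn]\in[0,\infty]$ satisfies $\E[\RhatFn]\leq\RPF$ by the chain of inequalities in~\eqref{eqn:lower_bound_E} (moving an infimum inside an expectation can only decrease the value). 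This bias bound is the only ingredient about $P$ that is needed.

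Next I would split into cases according to the value of $\RPF$. If $\RPF=0$, then $\E[\RhatFn]\leq 0$ together with $\RhatFn\geq 0$ forces $\RhatFn=0$ $P$-almost surely, so $\alpha\cdot\RhatFn=0=\RPF$ with probability one and the bound is trivially valid. If $\RPF>0$ (allowing $\RPF=\infty$), I would apply Markov's inequality with threshold $t=\RPF/\alpha>0$:
\[
\P_P\Bigl\{\RhatFn\geq \tfrac{1}{\alpha}\RPF\Bigr\}\;\leq\;\frac{\E[\RhatFn]}{\RPF/\alpha}\;=\;\frac{\alpha\,\E[\RhatFn]}{\RPF}\;\leq\;\alpha,
\]
where the last inequality uses $\E[\RhatFn]\leq\RPF$; when $\RPF=\infty$ the left-hand probability is simply $0$ since $\RhatFn<\infty$ always. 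Taking complements, $\P_P\{\alpha\cdot\RhatFn<\RPF\}\geq 1-\alpha$, and since $\{\alpha\cdot\RhatFn<\RPF\}\subseteq\{\RPF\geq\alpha\cdot\RhatFn\}$, this yields the required $\P_P\{\RPF\geq\alpha\cdot\RhatFn\}\geq 1-\alpha$.

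There is essentially no serious obstacle here: the argument is a one-line Markov bound, and the only points requiring a moment's care are (a) checking that $\E[\RhatFn]$ is well-defined and that~\eqref{eqn:lower_bound_E} applies with no finiteness assumptions on $P$ (both handled by the nonnegativity of $\ell$), and (b) disposing of the degenerate endpoints $\RPF\in\{0,\infty\}$ separately so that the division by $\RPF$ in Markov's inequality is legitimate. The factor $\alpha$ in front of $\RhatFn$ is precisely what converts the in-expectation bound $\E[\RhatFn]\leq\RPF$ into a high-probability statement; notably, no concentration of $\RhatFn$ about its mean is invoked, which is exactly why the result holds uniformly over all $P$. The same argument applies verbatim to the randomized setting of the footnote after Definition~\ref{def:L_U_DF}, since $\hat{L}_\alpha^{\mathrm{ERM}}$ does not use the random seed.
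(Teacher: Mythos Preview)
Your proposal is correct and follows essentially the same approach as the paper's proof: a case split on whether $\RPF=0$, combined with Markov's inequality applied to the nonnegative random variable $\RhatFn$ using the bias bound $\E[\RhatFn]\leq\RPF$ from~\eqref{eqn:lower_bound_E}. The paper's version is terser and does not separately discuss $\RPF=\infty$, but the argument is the same.
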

\begin{proof}
If $\RPF=0$, then we must have $\RhatFn=0$ almost surely. If instead $\RPF>0$, then
by Markov's inequality, we have
$\P_P\{\hat{L}_\alpha^{\mathrm{ERM}}(\Fcal,\sample_n)\leq \RPF\}=\P_P\{\RhatFn\leq \alpha^{-1}\RPF\}\geq 1-\alpha$, since $\E_P[\RhatFn]\leq \RPF$ as computed in~\eqref{eqn:lower_bound_E}. 
\end{proof}
In general, this lower bound is far from optimal---if $n$ is large (and so the minimum empirical risk $\RhatFn$ exhibits strong concentration), we might hope for a lower bound that is approximately equal to $\RhatFn$, but $\hat{L}_\alpha^{\mathrm{ERM}}(\Fcal,\cdot)$ is a constant factor smaller. In the Appendix, we will establish a tighter result: in the setting of a bounded loss, we will construct a valid lower bound of the form $\hat{L}_\alpha(\Fcal,\sample_n) = (1-\mathrm{o}(1))\cdot\RhatFn$. 

However, even though the bound constructed here is loose, it is sufficient for answering our questions about whether a nontrivial lower bound is possible: to examine the implications of this theorem, 
the following corollary interprets this result in terms of the interpolation capacity.
\begin{corollary}\label{cor:low_complexity_interpolation_capacity}
In the setting of Theorem~\ref{thm:low_complexity}, suppose $N_+(\Fcal,P) < n$.
Then
\[\P_P\left\{\hat{L}_\alpha^{\mathrm{ERM}}(\Fcal,\sample_n)>0\right\} = 1.\]
\end{corollary}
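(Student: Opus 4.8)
The plan is to unwind the definitions directly; there is essentially no analytic content to this corollary. First I would observe that, since $\alpha \in (0,1)$ is a fixed strictly positive constant, the events $\bigl\{\hat{L}_\alpha^{\mathrm{ERM}}(\Fcal,\sample_n) > 0\bigr\}$ and $\bigl\{\alpha\cdot\RhatFn > 0\bigr\}$ and $\bigl\{\RhatFn > 0\bigr\}$ all coincide. Hence the assertion $\P_P\bigl\{\hat{L}_\alpha^{\mathrm{ERM}}(\Fcal,\sample_n) > 0\bigr\} = 1$ is equivalent to $\P_P\bigl\{\RhatFn > 0\bigr\} = 1$; and since the loss $\ell$ takes values in $[0,\infty)$, we always have $\RhatFn = \inf_{f\in\Fcal}\hat{R}(f,\sample_n) \geq 0$, so this is in turn equivalent to $\P_P\bigl\{\RhatFn = 0\bigr\} = 0$.

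Next I would invoke the hypothesis $N_+(\Fcal,P) < n$ together with the definition $N_+(\Fcal,P) = \sup\bigl\{m : \P_P\{\hat{R}(\Fcal,\sample_m) = 0\} > 0\bigr\}$. Writing $S$ for the set appearing on the right-hand side, the assumption reads $n > \sup S$. If we had $n \in S$, then by definition of the supremum we would have $n \leq \sup S$, a contradiction; therefore $n \notin S$, which means precisely that $\P_P\bigl\{\RhatFn = 0\bigr\}$ fails to be strictly positive. Being a probability, it is nonnegative, so $\P_P\bigl\{\RhatFn = 0\bigr\} = 0$, which is exactly what was needed. The degenerate case in which $S = \emptyset$ (so that one reads $\sup S = -\infty$, or equivalently $N_+(\Fcal,P) = 0$) is handled identically, since $n \geq 1$.

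There is no genuine obstacle here: the corollary is a one-line consequence of the explicit formula for $\hat{L}_\alpha^{\mathrm{ERM}}$ from Theorem~\ref{thm:low_complexity} and the definition of $N_+(\Fcal,P)$. The only point I would state with a little care is the passage from ``$\P_P\{\RhatFn = 0\}$ is not positive'' to ``$\P_P\{\RhatFn = 0\} = 0$'', which uses nonnegativity of the loss so that $\{\RhatFn > 0\}$ is the complement of $\{\RhatFn = 0\}$ and therefore has full probability; everything else is bookkeeping with the definitions.
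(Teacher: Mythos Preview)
Your proposal is correct and follows essentially the same approach as the paper's proof, which is also a direct unwinding of definitions: the paper simply notes that $N_+(\Fcal,P) < n$ implies $\RhatFn > 0$ almost surely, and hence $\hat{L}_\alpha^{\mathrm{ERM}}(\Fcal,\sample_n) = \alpha\cdot\RhatFn > 0$ almost surely. Your version is more explicit about the bookkeeping (e.g., the edge case $S=\emptyset$ and the passage from ``not positive'' to ``equals zero''), but the logical content is identical.
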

\begin{proof}
If $N_+(\Fcal,P) < n$, then almost surely we have $\RhatFn>0$ and so $\hat{L}_\alpha^{\mathrm{ERM}}(\Fcal,\sample_n)>0$.
\end{proof}
That is, whenever $\Fcal$ fails to interpolate the training data, we are able
to conclude that ``all models are wrong'', as in the low-complexity regime
of Figure~\ref{fig:tikz_intro}. Outside of the low-complexity regime, on the other hand, the result of Theorem~\ref{thm:low_complexity} yields a lower bound $\hat{L}_\alpha^{\mathrm{ERM}}(\Fcal,\sample_n)=0$---that is, the lower bound on the model class risk is valid,
but meaningless. However, these results do not yet answer whether it is impossible for \emph{any} valid lower bound to be positive---it only tells us that this particular lower bound $\hat{L}_\alpha^{\mathrm{ERM}}(\Fcal,\cdot)$, computed via the empirical risk, is not informative. To resolve this remaining question, we will need to study the high-complexity regime.

\subsection{Hardness results in the high-complexity regime}

We now turn to the high-complexity regime, where we will see that there are fundamental
limits on the ability of \emph{any} distribution-free lower bound to provide
meaningful inference for $\RPF$. Based on the results above for the low-complexity setting, we might conjecture the following:
\begin{quote}
    Is it true that, if $\RhatFn=0$, then we must 
    have $\hat{L}_\alpha(\Fcal,\sample_n)=0$ as well (at least, with large probability) for any valid lower bound?
\end{quote}
That is, if $\Fcal$ can interpolate the training sample $\sample_n$ of size $n$,
is it \emph{impossible} to achieve a nontrivial and valid lower bound?
In fact, as previewed earlier in Figure~\ref{fig:tikz_intro}, this is not exactly the case, but a weaker result holds---if $\Fcal$ can interpolate a sample of size $N\gg n^2$, then indeed any valid lower bound $\hat{L}_\alpha(\Fcal,\sample_n)$ must (usually) equal zero.

Before stating our main result, we need to introduce some additional notation.
We will now consider an infinite stream of data points, $Z_1,Z_2,\dots\iidsim P$. Then $\sample_n=(Z_1,\dots,Z_n)$ is
a data set of $n$ i.i.d.\ draws from $P$, as before, but now we will also
write $\sample_N=(Z_1,\dots,Z_N)$ for each $N\geq n$, to denote larger
data sets that contain $\sample_n$ as a subset.
\begin{theorem}\label{thm:high_complexity}
Fix $\alpha\in(0,1)$, $n\geq1$, and model class $\Fcal$. 
Let $\hat{L}_\alpha(\Fcal, \cdot)$ be a valid distribution-free lower bound on the model class risk. Then, for all $N\geq n$,
\[
        \P_P\Bigl\{\hat{L}_\alpha(\Fcal, \sample_n)>  \RhatFN\Bigr\} \leq  \alpha + \frac{n^2}{2N}.
\]
\end{theorem}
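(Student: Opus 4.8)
The plan is to exploit the exchangeability of the infinite i.i.d.\ stream together with the validity guarantee of $\hat{L}_\alpha$, via a subsampling/coupling argument. The key idea: although $\sample_N = (Z_1,\dots,Z_N)$ is a fixed ordered list, if I pick a \emph{uniformly random} size-$n$ subset $S\subseteq[N]$, then $\sample_S := (Z_i)_{i\in S}$ has the same distribution as $\sample_n$ (it is a sample of $n$ i.i.d.\ draws from $P$, since the $Z_i$ are i.i.d.). Hence, by validity of $\hat{L}_\alpha$ applied to the subsample, $\P_P\{\hat{L}_\alpha(\Fcal,\sample_S) > \RPF\} \le \alpha$. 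The remaining work is to relate $\hat{L}_\alpha(\Fcal,\sample_S)$ back to $\hat{L}_\alpha(\Fcal,\sample_n)$ and to relate $\RPF$ to $\RhatFN$.

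First I would introduce a coupling: draw $Z_1,Z_2,\dots \iidsim P$, and independently draw a uniformly random subset $S$ of $[N]$ of size $n$ (equivalently, a uniformly random injection $[n]\hookrightarrow[N]$, but since $\hat L_\alpha$ is presumably permutation-invariant — or we can symmetrize — a subset suffices; I'll treat $\sample_S$ as the ordered tuple in increasing index order). The point is that $\sample_S$ and $\sample_n$ are \emph{both} distributed as $P^n$, so from validity, $\P\{\hat L_\alpha(\Fcal,\sample_S) > \RPF\} \le \alpha$. Now the event of interest is $\{\hat L_\alpha(\Fcal,\sample_n) > \RhatFN\}$. I'd like to argue that, with high probability over the choice of $S$, the subsample $\sample_S$ behaves like the original $\sample_n$ in the sense that any $f$ interpolating $\sample_N$ also interpolates $\sample_S$, so $\hat R(\Fcal,\sample_S) \le \RhatFN$; more to the point, I want to compare $\hat L_\alpha(\Fcal,\sample_S)$ with $\hat L_\alpha(\Fcal,\sample_n)$. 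The cleanest route: condition on the \emph{multiset} $\{Z_1,\dots,Z_N\}$ and observe that, conditionally, $\sample_n$ (the first $n$) and $\sample_S$ (a random $n$-subset) differ only in whether they happen to land on distinct indices — but they can also collide. Here is where the $n^2/(2N)$ enters: the key comparison is between ``$n$ samples drawn without replacement from the size-$N$ pool'' versus ``$n$ samples drawn \emph{with} replacement from the size-$N$ pool.'' By a birthday-bound / coupling argument, these two distributions have total variation distance at most $\binom{n}{2}/N \le n^2/(2N)$.

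So the core chain of inequalities I'd assemble is: (1) $\sample_n \stackrel{d}{=}$ [$n$ i.i.d.\ draws from the empirical distribution of the pool, then the pool re-randomized] — more precisely, conditional on the unordered pool $\mathcal Z_N=\{Z_1,\dots,Z_N\}$, the tuple $\sample_n$ is distributed as $n$ draws \emph{with replacement} from $\mathcal Z_N$ (this uses i.i.d.-ness of the full stream and a standard de Finetti-style / direct combinatorial identity); (2) conditional on $\mathcal Z_N$, drawing $n$ indices \emph{without} replacement from $[N]$ gives a tuple $\sample_S$ whose conditional law is within TV distance $\le \binom n2/N$ of the with-replacement law, hence of the conditional law of $\sample_n$; (3) therefore $\big|\P\{\hat L_\alpha(\Fcal,\sample_n) > \RhatFN \mid \mathcal Z_N\} - \P\{\hat L_\alpha(\Fcal,\sample_S) > \RhatFN \mid \mathcal Z_N\}\big| \le \binom n2/N$, using that $\RhatFN$ is $\mathcal Z_N$-measurable (it depends only on the pool) and that TV distance bounds the gap in probabilities of any event; (4) taking expectations over $\mathcal Z_N$, $\P\{\hat L_\alpha(\Fcal,\sample_n) > \RhatFN\} \le \P\{\hat L_\alpha(\Fcal,\sample_S) > \RhatFN\} + \binom n2/N$; and (5) for the first term on the right, since $\sample_S \stackrel{d}{=} P^n$ and $\RhatFN \le \hat R(f,\sample_S)$ for... — wait, I actually want $\RhatFN \le \RPF$ is false in general, so instead I use: $\hat L_\alpha(\Fcal,\sample_S) > \RhatFN$ is implied by $\hat L_\alpha(\Fcal,\sample_S) > \RPF$ only if $\RhatFN \ge \RPF$, which need not hold. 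The correct handling: note $\RhatFN \le \hat R(f^\star,\sample_N)$ for any near-minimizer $f^\star$ of $R_P$; but more simply, since $\sample_S$ is a sub-tuple of $\sample_N$ (as a multiset, when $S$ has distinct elements), every $f$ with $\hat R(f,\sample_N)$ small also has $\hat R(f,\sample_S)$ small — indeed $\hat R(f,\sample_S) \le \frac{N}{n}\,\hat R(f,\sample_N)$ deterministically... this is the delicate point. Let me instead use the even cleaner bound that works directly: apply validity of $\hat L_\alpha$ to $\sample_S$ against the target $\RPF$, giving $\P\{\hat L_\alpha(\Fcal,\sample_S) > \RPF\}\le\alpha$; then observe that on the event $\{\RhatFN \ge \RPF\}$ — which fails with small probability controlled by Markov ($\E\RhatFN \le \RPF$ gives no upper tail control, so this too needs care) — hmm.

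The honest summary of the main obstacle: the easy part is the TV/birthday bound giving the $n^2/(2N)$ term and the with-replacement representation of $\sample_n$ given the pool. The \textbf{delicate step} is closing the loop so that the ``$\alpha$'' appears — i.e.\ showing $\P\{\hat L_\alpha(\Fcal,\sample_S) > \RhatFN\} \le \alpha$ (or $\le \alpha + $ negligible). I expect the right fix is: reveal the pool $\mathcal Z_N$ and note that \emph{conditionally on $\mathcal Z_N$}, if we let $Q = Q_{\mathcal Z_N}$ denote the uniform distribution on the multiset $\mathcal Z_N$, then $\sample_S$ (drawn without replacement) is close in TV to $\sample_n^Q :=$ ($n$ i.i.d.\ draws from $Q$), and for the distribution $Q$ we have $R_Q(\Fcal) = \inf_{f\in\Fcal} \hat R(f,\sample_N) = \RhatFN$ exactly (the population risk under the uniform-on-pool measure is the empirical risk on the pool). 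Thus validity of $\hat L_\alpha$ \emph{applied with the distribution $Q$} and the $n$-sample $\sample_n^Q$ gives $\P\{\hat L_\alpha(\Fcal,\sample_n^Q) > \RhatFN \mid \mathcal Z_N\} = \P\{\hat L_\alpha(\Fcal,\sample_n^Q) > R_Q(\Fcal)\mid\mathcal Z_N\} \le \alpha$; transferring from $\sample_n^Q$ to $\sample_S$ costs $\le \binom n2/N$ in TV, and transferring from $\sample_S$-law to $\sample_n$-law costs another... no — $\sample_n$ given $\mathcal Z_N$ \emph{is} exactly $\sample_n^Q$ (with replacement = i.i.d.\ from $Q$), so that transfer is free. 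Hence $\P\{\hat L_\alpha(\Fcal,\sample_n) > \RhatFN\mid\mathcal Z_N\} \le \P\{\hat L_\alpha(\Fcal,\sample_S) > \RhatFN\mid\mathcal Z_N\} + \binom n2/N \le \alpha + \binom n2/N$, and averaging over $\mathcal Z_N$ and using $\binom n2/N \le n^2/(2N)$ finishes the proof. The one genuinely subtle point to get exactly right is the measurability and the application of Definition~\ref{def:L_U_DF} to the \emph{random} distribution $Q_{\mathcal Z_N}$ — this works because validity holds for \emph{all} $P$, hence for $Q = q$ for every fixed realization $q$ of $Q_{\mathcal Z_N}$, and we then integrate; I would spell this conditioning argument out carefully.
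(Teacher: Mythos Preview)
Your overall strategy---apply the distribution-free validity of $\hat L_\alpha$ to the empirical distribution $Q$ of the pool $(Z_1,\dots,Z_N)$, for which $R_Q(\Fcal)=\RhatFN$, and then pay the with/without-replacement total variation cost $\binom{n}{2}/N\le n^2/(2N)$---is exactly the paper's ``sample--resample'' argument. However, your execution contains a wrong identification: you claim that, conditional on the unordered pool $\mathcal Z_N$, the tuple $\sample_n=(Z_1,\dots,Z_n)$ is distributed as $n$ i.i.d.\ draws from $Q$ (i.e., with replacement). This is false. Conditional on $\mathcal Z_N$, the ordered tuple $(Z_1,\dots,Z_N)$ is a uniformly random permutation of the multiset, so $\sample_n$ is a sample \emph{without} replacement---in your notation, $\sample_n\mid\mathcal Z_N \stackrel{d}{=}\sample_S$, not $\sample_n^Q$. (Sanity check: if $P$ is nonatomic, $\sample_n$ almost surely has no repeats, whereas $\sample_n^Q$ repeats an index with probability $\approx\binom{n}{2}/N$.) Your final displayed chain inherits this confusion: you bound by the $\sample_S$-probability and then invoke $\alpha$, but validity was established for $\sample_n^Q$, not $\sample_S$.

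With the identification corrected the argument goes through cleanly: conditionally on $\mathcal Z_N$, validity for the distribution $Q$ gives $\P\{\hat L_\alpha(\Fcal,\sample_n^Q)>\RhatFN\mid\mathcal Z_N\}\le\alpha$; the TV bound transfers this to the without-replacement sample, which \emph{is} $\sample_n\mid\mathcal Z_N$; then average over $\mathcal Z_N$. The paper sidesteps the conditioning by first fixing deterministic $z_1,\dots,z_N$, applying validity to $Q=\frac{1}{N}\sum_i\delta_{z_i}$ and the TV bound there, and only then randomizing the $z_i$'s and using exchangeability to identify $(Z_{J_1},\dots,Z_{J_n})\stackrel{d}{=}(Z_1,\dots,Z_n)$. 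This is equivalent to your (corrected) argument, and arguably tidier because it never needs to reason about the conditional law of $\sample_n$ given the pool.
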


The proof of this result follows the ``sample--resample'' strategy used for establishing certain hardness results in the distribution-free inference literature (see \citet{angelopoulos2024theoretical}), which relies on the fact that, when sampling $n$ times from a population of size $N$, 
the total variation distance between sampling with replacement and sampling without replacement is bounded by $\frac{n^2}{2N}$.

To interpret this theorem, let us return to the question of whether we 
can determine that ``all models are wrong'': can a valid distribution-free lower bound satisfy $\hat{L}_\alpha(\Fcal,\sample_n)>0$, certifying that the model class risk $\RPF$ is not zero, more often than the trivial solution in~\eqref{eqn:trivial_alpha}?
The following result shows how the interpolation capacity of $\Fcal$ allows us to characterize 
a regime where any valid lower bound must necessarily be trivial.
\begin{corollary}\label{cor:high_complexity_interpolation_capacity}
In the setting of Theorem~\ref{thm:high_complexity}, for any valid distribution-free lower bound on the model class risk, it holds that
\[\P_P\Bigl\{\hat{L}_\alpha(\Fcal, \sample_n) > 0\Bigr\} \leq \alpha + \frac{n^2}{2N(\Fcal,P)}.\]
\end{corollary}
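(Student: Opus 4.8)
The plan is to read this off from Theorem~\ref{thm:high_complexity} by specializing to $N = N(\Fcal,P)$. First I would dispose of the cases where this choice is not available. If $N(\Fcal,P) < n$, then either $N(\Fcal,P) = 0$, so the right-hand side is $+\infty$ and there is nothing to prove, or $n \ge 2$ and $1 \le N(\Fcal,P) \le n-1$, in which case $\tfrac{n^2}{2N(\Fcal,P)} \ge \tfrac{n^2}{2(n-1)} \ge 1$ (since $n^2 - 2(n-1) = (n-1)^2 + 1 > 0$), so the bound is again vacuous. Hence it suffices to treat $N(\Fcal,P) \ge n$.

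The one step that needs a short argument is the claim that, writing $N := N(\Fcal,P)$, we have $\RhatFN = 0$ holding $P$-almost surely --- i.e.\ that the supremum in the definition of $N(\Fcal,P)$ is attained (when finite). For this I would first note a monotonicity property: for any $f \in \Fcal$ and $m \ge 2$, nonnegativity of $\ell$ gives $\hat{R}(f,\sample_{m-1}) \le \tfrac{m}{m-1}\hat{R}(f,\sample_m)$, and taking infima over $f \in \Fcal$ yields $\hat{R}(\Fcal,\sample_{m-1}) \le \tfrac{m}{m-1}\hat{R}(\Fcal,\sample_m)$; so if $\hat{R}(\Fcal,\sample_m) = 0$ holds $P$-a.s.\ then so does $\hat{R}(\Fcal,\sample_{m-1})$. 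Thus $\{m \ge 1 : \hat{R}(\Fcal,\sample_m) = 0 \textnormal{ $P$-a.s.}\}$ is an initial segment of $\N$; when its supremum $N(\Fcal,P)$ is finite it therefore lies in the set, giving $\RhatFN = 0$ $P$-a.s., and when $N(\Fcal,P) = \infty$ we instead have $\hat{R}(\Fcal,\sample_N) = 0$ $P$-a.s.\ for every finite $N \ge n$.

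Putting the pieces together: if $N := N(\Fcal,P) \in [n,\infty)$, then since $\RhatFN = 0$ $P$-a.s.\ the events $\{\hat{L}_\alpha(\Fcal,\sample_n) > 0\}$ and $\{\hat{L}_\alpha(\Fcal,\sample_n) > \RhatFN\}$ agree up to a $P$-null set, so Theorem~\ref{thm:high_complexity} gives $\P_P\{\hat{L}_\alpha(\Fcal,\sample_n) > 0\} = \P_P\{\hat{L}_\alpha(\Fcal,\sample_n) > \RhatFN\} \le \alpha + \tfrac{n^2}{2N} = \alpha + \tfrac{n^2}{2N(\Fcal,P)}$; and if $N(\Fcal,P) = \infty$, the same argument with an arbitrary finite $N \ge n$ gives $\P_P\{\hat{L}_\alpha(\Fcal,\sample_n) > 0\} \le \alpha + \tfrac{n^2}{2N}$, and letting $N \to \infty$ yields $\P_P\{\hat{L}_\alpha(\Fcal,\sample_n) > 0\} \le \alpha$, which matches $\alpha + \tfrac{n^2}{2N(\Fcal,P)}$ in this case. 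The only genuine obstacle is the attainment/limit argument of the middle paragraph; everything else is a direct substitution into Theorem~\ref{thm:high_complexity}.
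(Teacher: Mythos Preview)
Your proposal is correct and follows the same route as the paper's proof: apply Theorem~\ref{thm:high_complexity} at $N = N(\Fcal,P)$ after noting that $\RhatFN = 0$ almost surely there. You are in fact more careful than the paper, which tacitly assumes both that the supremum defining $N(\Fcal,P)$ is attained and that $N(\Fcal,P)\ge n$; your monotonicity argument and your handling of the vacuous cases $N(\Fcal,P)<n$ and $N(\Fcal,P)=\infty$ fill in exactly those gaps.
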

\begin{proof}
\sloppy This result follows immediately from Theorem~\ref{thm:high_complexity}: for any $N\leq N(\Fcal,P)$, we
have $\RhatFN=0$ almost surely, and so $\P_P\{\hat{L}_\alpha(\Fcal,\sample_n)>0\} = \P_P\{\hat{L}_\alpha(\Fcal,\sample_n)> \RhatFN\}\leq \alpha + \frac{n^2}{2N}$.
\end{proof}

To summarize what we have seen for the low-complexity and high-complexity regimes in terms of the interpolation capacity, if $\Fcal$ can interpolate data sets of size $N\gg n^2$, then $\P_P\{\hat{L}_\alpha(\Fcal, \sample_n) > 0\} \leq \alpha + \mathrm{o}(1)$ for every valid lower bound $\hat{L}_\alpha(\Fcal,\cdot)$. Recalling the trivial solution in~\eqref{eqn:trivial_alpha}, this means that there do not exist any nontrivial valid lower bounds.
On the other hand, if $\Fcal$ cannot interpolate data sets of size $n$,
then there always exists a positive valid lower bound, namely, $\hat{L}_\alpha^{\mathrm{ERM}}(\Fcal,\cdot)$.

\section{Examples at the boundaries: the in-between regime}\label{sec:boundary}
Through the results of Section~\ref{sec:extremes} (as summarized in Figure~\ref{fig:tikz_intro}), we have seen that providing a nontrivial lower bound on the model class risk, using a sample of size $n$, is always possible if $\Fcal$ does not interpolate $n$ data points, but inevitably impossible if it interpolates $\gg n^2$ many data points. 

In this section, we shed some light on what happens in the ``in-between'' regime, where the interpolation capacity $N(\Fcal, P)$ lies somewhere between $n$ and $\mathcal{O}(n^2)$. Throughout this section, we will work in a regression setting, where we observe data points $(X_i,Y_i)\in\R^d\times\R=:\Zcal$, the model class $\Fcal$ is some set of functions $f:\R^d\to\R$, and $\ell$ is the squared loss, $\ell(f,(x,y))=(f(x)-y)^2$.

We will present two examples that exhibit different behavior in terms of how interpolation capacity relates to the problem of inference on $\RPF$:
\begin{itemize}
    \item \sloppy For one example (piecewise constant functions), we will see that a nontrivial lower bound is possible whenever $N(\Fcal,P) =\mathcal{O}(n^2)$---and in particular, we may have $N(\Fcal,P)\gg n$.
    \item For the second example (linear functions), we will see that it is impossible to construct a nontrivial lower bound whenever $N(\Fcal,P) \gg n$.
\end{itemize}
In particular, the contrast between these two examples implies that there is no universal phase transition for the problem of constructing a nontrivial lower bound: while a nontrivial lower bound is possible for any $\Fcal$ in the low-complexity regime, and impossible for any $\Fcal$ in the high-complexity regime, its existence in the in-between regime depends on the nature of $\Fcal$.

\subsection{Example: piecewise constant regression}\label{sec:piecewise_constant}
Define the model class of \emph{piecewise constant functions} with $\leq m$ components as
\[\Fpwc^{(m)} = \left\{ f:\R^d\to\R \  : \ \big|\{f(x):x\in\R^d\}\big|\leq m\right\}.\]
Equivalently, a function $f$ lies in $\Fpwc^{(m)}$ if it can be expressed as
\[f(x) = \sum_{i=1}^m y_i \cdot\one_{x\in A_i},\]
for some $y_1,\dots,y_m\in\R$ and some partition $\R^d=A_1\cup\dots\cup A_m$.
For example, methods such as regression trees or random forests will produce fitted models of this form.

For any distribution $P$ on $\R^d\times\R$ such that its marginal $P_X$ is nonatomic (i.e., $\P_P\{X=x\}=0$ for all $x\in\R^d$), we can calculate the interpolation complexity of this model class as
\[N_+(\Fpwc^{(m)},P) \geq N(\Fpwc^{(m)},P) \geq m,\]
since, for any $(X_1,Y_1),\dots,(X_m,Y_m)$ with distinct values $X_1,\dots,X_m$, we can construct a function $f\in\Fpwc^{(m)}$ with $f(X_i)=Y_i$ for each $i\in[m]$.
In particular, the lower bound 
\[\hat{L}_\alpha^{\mathrm{ERM}}(\Fpwc^{(m)},\sample_n) = \alpha\cdot \hat{R}(\Fpwc^{(m)},\sample_n)\]
constructed in Theorem~\ref{thm:low_complexity} is a valid but meaningless lower bound for any sample size $n\leq m$, since it is zero almost surely. However, we will now show that a better result can be obtained by constructing a different valid lower bound.

\begin{theorem}\label{thm:piecewise_constant_example}
    Fix $\alpha\in(0,1)$, and let $\alpha_0+\alpha_1=\alpha$, with $\alpha_0,\alpha_1>0$. Fix any $n\geq1$ and $m\geq 1$ with
    \[ m \leq \frac{n(n-1)}{2\log(1/\alpha_0)}.\] 
    Then
    \[\hat{L}_\alpha^{\mathrm{pwc}}(\Fpwc^{(m)},\sample_n) :=  
    \alpha_1 \cdot \hat{R}(\Fpwc^{(n-1)},\sample_n)\]
    is a valid distribution-free lower bound on $R_P(\Fpwc^{(m)})$.
\end{theorem}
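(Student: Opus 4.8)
The plan is to compare $\hat R(\Fpwc^{(n-1)},\sample_n)$ with the empirical risk of a near-optimal model in $\Fpwc^{(m)}$, and to use a birthday-problem argument to show that, with probability at least $1-\alpha_0$, two of the $n$ sample points land in a common cell of that model's partition---which is exactly the coincidence that lets $\Fpwc^{(n-1)}$ compete on the sample. (Informally, $\hat R(\Fpwc^{(n-1)},\sample_n)=\tfrac{1}{2n}\min_{i\ne j}(Y_i-Y_j)^2$ whenever the $X_i$ are distinct, so the proposed bound is small precisely when the responses cluster tightly, i.e.\ when $\Fpwc^{(m)}$ can fit the data well.) If $R_P(\Fpwc^{(m)})=\infty$ the claimed bound is vacuous, so assume it is finite; fix $\epsilon>0$ and pick $f^\ast\in\Fpwc^{(m)}$ with partition $\R^d=A_1\cup\dots\cup A_m$ and $R_P(f^\ast)\le R_P(\Fpwc^{(m)})+\epsilon$. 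Let $\mathcal{C}$ be the event that $X_i,X_j\in A_k$ for some $i\ne j$ and some $k$. I would then establish two estimates: (i) $\P_P(\mathcal{C}^c)\le\alpha_0$; and (ii) $\E_P[\hat R(\Fpwc^{(n-1)},\sample_n)\,\one_{\mathcal{C}}]\le R_P(f^\ast)\le R_P(\Fpwc^{(m)})+\epsilon$. Given these, Markov's inequality yields $\P_P\{\alpha_1\hat R(\Fpwc^{(n-1)},\sample_n)\,\one_{\mathcal{C}}\ge R_P(\Fpwc^{(m)})+\epsilon\}\le\alpha_1$, and adding (i) gives $\P_P\{\alpha_1\hat R(\Fpwc^{(n-1)},\sample_n)\ge R_P(\Fpwc^{(m)})+\epsilon\}\le\alpha_1+\alpha_0=\alpha$; letting $\epsilon\downarrow0$ along a sequence then proves the theorem.

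For (i): under $\mathcal{C}^c$ the cells containing $X_1,\dots,X_n$ are pairwise distinct, so, writing $p_k=\P_P(X\in A_k)$, the quantity $\P_P(\mathcal{C}^c)$ equals the classical probability that $n$ i.i.d.\ draws from the distribution $(p_1,\dots,p_m)$ on $[m]$ are all distinct, namely $n!\,e_n(p_1,\dots,p_m)$, where $e_n$ is the $n$-th elementary symmetric polynomial. Maclaurin's inequality gives $e_n(p)\le\binom{m}{n}\,(m^{-1}\sum_k p_k)^n=\binom{m}{n}m^{-n}$, so $\P_P(\mathcal{C}^c)\le\prod_{j=1}^{n-1}(1-j/m)\le\exp(-n(n-1)/(2m))\le\alpha_0$, where the last inequality is exactly the hypothesis $m\le n(n-1)/(2\log(1/\alpha_0))$; bins with $p_k=0$ only reduce the effective number of bins and so only sharpen the estimate.

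For (ii): on $\mathcal{C}$ the $n$ sample points occupy at most $n-1$ distinct cells of $f^\ast$, so the partition of $\{1,\dots,n\}$ according to which cell contains each $X_i$ has at most $n-1$ blocks. The function that equals the sample mean of $\{Y_i:X_i\in A_k\}$ on each occupied cell $A_k$ (with the sample-free part of $\R^d$ lumped into any one of these cells) therefore lies in $\Fpwc^{(n-1)}$, and, since the mean minimizes the sum of squared residuals within each block, its empirical risk is at most $\hat R(f^\ast,\sample_n)$. Hence $\hat R(\Fpwc^{(n-1)},\sample_n)\,\one_{\mathcal{C}}\le\hat R(f^\ast,\sample_n)$ pointwise, and taking expectations, $\E_P[\hat R(\Fpwc^{(n-1)},\sample_n)\,\one_{\mathcal{C}}]\le\E_P[\hat R(f^\ast,\sample_n)]=R_P(f^\ast)$, which is (ii).

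The one genuinely non-routine ingredient is (i): converting the birthday probability into the clean tail $\exp(-\binom{n}{2}/m)$ requires the sharp symmetric-function inequality of Maclaurin (equivalently, Newton's inequalities), and this is precisely where the quadratic scaling $m\lesssim n^2$ enters, paralleling the $n^2/N$ term of Theorem~\ref{thm:high_complexity}. The competitor used in (ii) is essentially just the observation that a collision among the sample's cells frees up a block of the partition, and the remaining work (the Markov/union-bound bookkeeping and the $\epsilon\downarrow0$ limit) is standard. It also remains to check measurability of $\hat R(\Fpwc^{(n-1)},\sample_n)$ and of $\mathcal{C}$, and to note that ties among the $X_i$ are harmless---they only force additional sample points to share a cell, which only makes $\mathcal{C}$ more likely and the estimate in (ii) easier.
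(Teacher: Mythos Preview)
Your proof is correct and follows essentially the same route as the paper's: pick a near-optimal $f_\varepsilon\in\Fpwc^{(m)}$, use a birthday/occupancy bound to show that with probability $\ge 1-\alpha_0$ the sample occupies at most $n-1$ cells of $f_\varepsilon$'s partition (your event $\mathcal{C}$ is exactly the paper's $\{|I(\sample_n)|\le n-1\}$), deduce that $\hat R(\Fpwc^{(n-1)},\sample_n)\le\hat R(f_\varepsilon,\sample_n)$ on that event, and finish with Markov's inequality and a union bound. The only cosmetic differences are that the paper cites \cite{munford1977birthday} for the uniform-maximizer step while you use Maclaurin's inequality, and the paper's competitor in $\Fpwc^{(n-1)}$ simply copies $f_\varepsilon$'s values on the occupied cells rather than taking within-cell sample means (so the paper's argument does not use the squared-loss structure, whereas yours does---but that is the ambient loss here anyway).
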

The intuition for proving the above theorem is the following: consider any $f\in\Fpwc^{(m)}$, which takes (at most) $m$ distinct values in its output. 
If $m$ is not too large, then with high probability, at most $n-1$ of these values will actually be observed in a random sample of size $n$---that is, at least one observed value will be repeated. Consequently, on the sample $\sample_n$, the empirical risk of $f$ can also be achieved by some function in $\Fpwc^{(n-1)}$, which is a much less complex model class.

How should we interpret this result?
Consider a setting where $P_X$ and $P_Y$ are both nonatomic. In this case, $\hat{R}(\Fpwc^{(n-1)},\sample_n)>0$ must hold almost surely, since we cannot interpolate $n$ unique $Y$ values with any function $f\in\Fpwc^{(n-1)}$. In particular, since we can take $m\propto n^2$ in this theorem, this result
verifies that the fundamental hardness result provided in Theorem~\ref{thm:high_complexity} is tight: for this particular model class, although $N(\Fpwc^{(m)},P) \geq m\propto n^2$, it is nonetheless possible to provide a nontrivial valid lower bound, since $\hat{L}_\alpha^{\mathrm{pwc}}(\Fpwc^{(m)},\sample_n)>0$ almost surely.\footnote{We will also prove a stronger version of this result in the Appendix, that establishes a more precise connection between the scaling of $m$ and the behavior of a valid lower bound.}

\subsection{Example: linear models}\label{sec:linear}
For our second example, define the class of all linear predictors,
\begin{equation*}
	\Flin^{(d)} = \bigl\{x \mapsto x^\top \beta : \beta \in \R^d \bigr\}.
\end{equation*} For this class, for any distribution $P$ with a density on $\R^d\times \R$, we have
\[N_+(\Flin^{(d)},P) = N(\Flin^{(d)},P) = d,\]
since any $d$ data points in generic position can be interpolated by some $f\in\Flin^{(d)}$.
In particular, the valid lower bound $\hat{L}_\alpha^{\mathrm{ERM}}(\Flin^{(d)},\cdot)$, constructed in Theorem~\ref{thm:low_complexity}, provides a nontrivial lower bound for the model class risk if and only if $d<n$. However, might there be some other valid lower bound that is nontrivial even for $d\geq n$?

Our main result in this section will show that, in terms of finding a nontrivial lower bound, the performance of $\hat{L}_\alpha^{\mathrm{ERM}}(\Flin^{(d)},\cdot)$ is essentially the best that we could hope for, over a wide class of distributions.
That is, when $d\gg n$, it is impossible to construct a nontrivial lower bound.

We first need to set up some notation and definitions. To begin, let us define a class of noise-free linear distributions on $\R^d \times \R$,
\[\Qcal_{\textnormal{lin}} := \Bigl\{P \ : \textnormal{ for some $\beta\in\R^d$, $Y=X^\top\beta$ holds $P$-almost surely}\Bigr\},\]
and the class of $n$-fold product distributions from this class, \[\Qcal^{(n)}_{\textnormal{lin}} = \bigl\{P^n  : P\in\Qcal_{\textnormal{lin}}\bigr\}.\] 
Let
$\tilde{\Qcal}^{(n)}_{\textnormal{lin}}$ denote the family of mixtures of distributions in $\Qcal^{(n)}_{\textnormal{lin}}$, which can be seen as the convex hull of $\Qcal^{(n)}_{\textnormal{lin}}$.
Finally, for any distribution $P$ on $\R^d \times \R$, define
\[\lambda_{n,d}(P) := \inf_{  Q \in \tilde{\Qcal}^{(n)}_{\textnormal{lin}}} \dtv\bigl(P^n, Q  \bigr).\]

The following theorem provides a hardness result that bounds our ability to provide a nontrivial valid lower bound on the model class risk.
\begin{theorem} \label{thm:linear_example}
	Fix $\alpha \in (0,1)$, $n\geq 1$, and $d\geq 1$. Let $\hat{L}_\alpha(\Flin^{(d)},\cdot)$ be a valid distribution-free lower bound on the model class risk. Then for any distribution $P$ on $\R^{d} \times \R$,
    \[\P_P\left\{\hat{L}_\alpha(\Flin^{(d)},\sample_n) > 0 \right\} \leq \alpha  + \lambda_{n,d}(P).\]
\end{theorem}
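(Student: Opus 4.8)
The plan is to reduce the problem to the family of noise-free linear distributions, on which $\Flin^{(d)}$ is exactly correct and hence has model class risk zero, and then to transport the resulting bound to an arbitrary $P$ by a total variation comparison. The key structural observation is that $\{\hat{L}_\alpha>0\}$ is a \emph{fixed} subset of $\Zcal^n$, not depending on the data-generating distribution, so that the same event can be evaluated and compared across different distributions; everything else is bookkeeping built into the definitions of $\Qcal_{\textnormal{lin}}$, $\tilde{\Qcal}^{(n)}_{\textnormal{lin}}$ and $\lambda_{n,d}(P)$.

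First, for any $P_0\in\Qcal_{\textnormal{lin}}$ there is $\beta\in\R^d$ with $Y=X^\top\beta$ holding $P_0$-a.s., so the predictor $x\mapsto x^\top\beta$ lies in $\Flin^{(d)}$ and achieves $\E_{P_0}[(X^\top\beta-Y)^2]=0$; since the loss is nonnegative this forces $R_{P_0}(\Flin^{(d)})=0$. Applying the validity of $\hat{L}_\alpha$ (Definition~\ref{def:L_U_DF}) with $P=P_0$, and using that $\hat{L}_\alpha\ge 0$,
\[\P_{P_0}\bigl\{\hat{L}_\alpha(\Flin^{(d)},\sample_n)>0\bigr\}=1-\P_{P_0}\bigl\{R_{P_0}(\Flin^{(d)})\ge \hat{L}_\alpha(\Flin^{(d)},\sample_n)\bigr\}\le \alpha.\]
Next, set $A=\{\mathbf{z}\in\Zcal^n:\hat{L}_\alpha(\Flin^{(d)},\mathbf{z})>0\}$ (in the randomized case, replace $\one_A$ by the measurable function $\mathbf{z}\mapsto \P_\xi\{\hat{L}_\alpha(\Flin^{(d)},\mathbf{z},\xi)>0\}\in[0,1]$). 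The display above says $P_0^n(A)\le\alpha$ for every $P_0\in\Qcal_{\textnormal{lin}}$; since every $Q\in\tilde{\Qcal}^{(n)}_{\textnormal{lin}}$ is a mixture $Q=\int P_0^n\,d\mu(P_0)$ over $P_0\in\Qcal_{\textnormal{lin}}$, Tonelli's theorem (applied to the nonnegative bounded integrand $\one_A$) gives $Q(A)=\int P_0^n(A)\,d\mu(P_0)\le\alpha$ for every such $Q$. Finally, for an arbitrary distribution $P$ on $\R^d\times\R$ and any $Q\in\tilde{\Qcal}^{(n)}_{\textnormal{lin}}$, the definition of total variation distance gives $P^n(A)\le Q(A)+\dtv(P^n,Q)\le\alpha+\dtv(P^n,Q)$; taking the infimum over $Q\in\tilde{\Qcal}^{(n)}_{\textnormal{lin}}$ yields $\P_P\{\hat{L}_\alpha(\Flin^{(d)},\sample_n)>0\}=P^n(A)\le\alpha+\lambda_{n,d}(P)$, as claimed.

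I do not anticipate a genuine obstacle: the argument is short, and the only points requiring care are (a) verifying that $A$ (or its randomized analogue) is a fixed measurable subset of $\Zcal^n$, so that the identical object is integrated against $P_0^n$, $Q$, and $P^n$; (b) the mixture interchange, which is just Tonelli for a nonnegative bounded integrand and works equally for finite convex combinations and general mixtures; and (c) keeping straight that $\hat{L}_\alpha$ takes values in $[0,\infty]$, so that $\{\hat{L}_\alpha>0\}$ and $\{\hat{L}_\alpha=0\}$ are complementary and the validity statement $\P_{P_0}\{R_{P_0}(\Flin^{(d)})\ge\hat{L}_\alpha\}\ge1-\alpha$ translates directly into $\P_{P_0}\{\hat{L}_\alpha>0\}\le\alpha$. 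The real content of the theorem is in having set up $\lambda_{n,d}(P)$ so that this reduction goes through cleanly.
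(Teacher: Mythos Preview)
Your proposal is correct and follows essentially the same argument as the paper: verify $R_{P_0}(\Flin^{(d)})=0$ for each $P_0\in\Qcal_{\textnormal{lin}}$, use validity to get $P_0^n(A)\le\alpha$ for $A=\{\hat{L}_\alpha>0\}$, pass to mixtures, then compare to $P^n$ via total variation and take the infimum. Your version is, if anything, slightly more explicit about the mixture step (invoking Tonelli) and about the randomized case.
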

For intuition, this holds simply due to the definition of total variation distance: if $\lambda_{n,d}(P)$ is low, then the distribution $P^n$ of the observed data is nearly indistinguishable from a convex combination of noise-free linear distributions---and consequently we cannot construct a nontrivial lower bound.

From this result, we can therefore see that any valid lower bound is essentially trivial whenever $\lambda_{n,d}(P)\approx 0$---but when will this be the case?  The next result establishes that, in a high-dimensional setting where $d\geq n$, the quantity $\lambda_{n,d}(P)$ is small for a broad class of distributions $P$.
\begin{proposition} \label{prop:general-bound-lambda}
Let $P$ be a distribution on $\R^d\times\R$ with a density. Let $(X_1,Y_1),\dots,(X_n,Y_n)\iidsim P$, and define $\bX\in\R^{n\times d}$ as the matrix with rows $X_i$.
If $d \geq n$, then for any positive definite matrix $\Omega\in\R^{d\times d}$,
	\[		 \lambda_{n,d}(P) \leq  \frac{1}{2}\E_P\left[\left|\frac{h_\Omega(\bX)}{\E_P[h_\Omega(\bX)]} - 1\right|\right], 
	\] where $h_\Omega(\bX) = (\det(\bX \Omega \bX^\top ))^{-1/2} $ (and we implicitly assume that $\E_P[h_\Omega(\bX)]<\infty$).
\end{proposition}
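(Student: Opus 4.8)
The plan is to construct, for each $B>0$, a mixture $Q_B\in\tilde{\Qcal}^{(n)}_{\textnormal{lin}}$ and show that $\dtv(P^n,Q_B)$ converges, as $B\to\infty$, to the right-hand side of the claimed inequality; since $\lambda_{n,d}(P)$ is an infimum over $\tilde{\Qcal}^{(n)}_{\textnormal{lin}}$, this suffices. The distribution reached in the limit is the ``covariate-tilted'' version $Q^\ast$ of $P^n$: writing $p$ for the density of $P$, with marginal density $p_X$ and conditional density $p_{Y\mid X}$, the density of $Q^\ast$ is $q^\ast(\bx,\mathbf{y})=\frac{h_\Omega(\bx)}{\E_P[h_\Omega(\bX)]}\prod_{i=1}^{n}p(x_i,y_i)$. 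Because $h_\Omega$ is a function of $\bx$ alone and $\E_P\big[h_\Omega(\bX)/\E_P[h_\Omega(\bX)]\big]=1$, the standard formula for the total variation distance between a density and a reweighting of it gives $\dtv(P^n,Q^\ast)=\frac12\E_P\big[\,\big|h_\Omega(\bX)/\E_P[h_\Omega(\bX)]-1\big|\,\big]$, exactly the claimed bound. So the whole task is to realize this covariate-tilted distribution as a total-variation limit of mixtures of noise-free linear product distributions.

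To do so, for $\beta\in\R^d$ let $P^{(\beta)}\in\Qcal_{\textnormal{lin}}$ be the noise-free linear distribution under which $Y=X^\top\beta$ and $X$ has density proportional to $x\mapsto p_X(x)\,p_{Y\mid X}(x^\top\beta\mid x)$ (the covariate density tilted by the conditional likelihood of the fitted value $x^\top\beta$), with normalizing constant $c(\beta)=\int_{\R^d}p_X(x)\,p_{Y\mid X}(x^\top\beta\mid x)\,dx$. I would then set $Q_B=\int (P^{(\beta)})^{n}\,d\nu_B(\beta)\in\tilde{\Qcal}^{(n)}_{\textnormal{lin}}$, mixing over $\beta$ according to the probability density $\nu_B(\beta)\propto c(\beta)^{n}\,\one\{\|\Omega^{-1/2}\beta\|\le B\}$; truncating to an $\Omega$-ellipsoid is needed both to make $\nu_B$ a genuine probability distribution (one checks $\int_{\R^d}c(\beta)^n\,d\beta=\infty$ once $d>n$) and to make the matrix $\Omega$ enter the final formula correctly. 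One then writes the density of $Q_B$ on $(\R^d\times\R)^n$ by integrating out $\beta$, using the substitution $\gamma=\Omega^{-1/2}\beta$ and the coarea formula for the map $\gamma\mapsto\bx\Omega^{1/2}\gamma$ to split the $\gamma$-integral into an integral over $\mathbf{y}=\bx\beta\in\R^n$ and an integral over the affine fibre $\{\beta:\bx\beta=\mathbf{y}\}$. This is legitimate because $d\ge n$ and $\bX$ has full row rank $P^n$-almost surely (this is where the assumption that $P$ has a density is used). In the resulting expression three things happen at once: the factor $c(\beta)^{-n}$ normalizing the covariate marginal of $P^{(\beta)}$ cancels the $c(\beta)^{n}$ in $\nu_B$; the identities $x_i^\top\beta=y_i$ turn $\prod_i p_{Y\mid X}(x_i^\top\beta\mid x_i)$ into $\prod_i p_{Y\mid X}(y_i\mid x_i)$, which combines with $\prod_i p_X(x_i)$ to give $\prod_i p(x_i,y_i)$; and the Jacobian of $\gamma\mapsto\bx\Omega^{1/2}\gamma$ equals $\det(\bx\Omega\bx^\top)^{1/2}=h_\Omega(\bx)^{-1}$. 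The outcome is that $Q_B$ has density $q_B(\bx,\mathbf{y})=a_B\,h_\Omega(\bx)\,V_B(\bx,\mathbf{y})\prod_{i=1}^{n}p(x_i,y_i)$, where $V_B(\bx,\mathbf{y})$ is the $(d-n)$-dimensional volume of the part of the fibre $\{\beta:\bx\beta=\mathbf{y}\}$ lying in $\{\|\Omega^{-1/2}\beta\|\le B\}$, and $a_B>0$ is a normalizing constant.

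Finally I would send $B\to\infty$. For every $(\bx,\mathbf{y})$ with $\bx$ of full row rank the affine fibre is at finite distance from the origin, so $V_B(\bx,\mathbf{y})$, divided by the volume of the radius-$B$ ball in a $(d-n)$-dimensional subspace, tends to $1$; combined with the constraint $\int q_B=1$ and dominated convergence (with $h_\Omega(\bX)$, integrable by assumption, as dominating function), this forces $q_B/\prod_i p(x_i,y_i)\to h_\Omega(\bx)/\E_P[h_\Omega(\bX)]$ pointwise $P^n$-a.e., with both sides integrating to $1$ against $P^n$. Scheffé's lemma then yields $\dtv(P^n,Q_B)=\frac12\E_P\big[\,\big|q_B/\!\prod_i p(x_i,y_i)-1\big|\,\big]\to\frac12\E_P\big[\,\big|h_\Omega(\bX)/\E_P[h_\Omega(\bX)]-1\big|\,\big]$, which completes the proof.

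I expect the change-of-variables step to be the main obstacle: one must track carefully that the fibre $\{\beta:\bx\beta=\mathbf{y}\}$ is an unbounded $(d-n)$-dimensional affine set, and verify that $\Omega$ enters the Jacobian precisely through $\det(\bx\Omega\bx^\top)$ --- this is exactly why the truncation in $\nu_B$ is by an $\Omega$-ellipsoid rather than a Euclidean ball, and why the bound is stated with an arbitrary positive definite $\Omega$. A secondary but real nuisance is regularity: one needs $0<c(\beta)<\infty$ and $\int_{\|\Omega^{-1/2}\beta\|\le B}c(\beta)^n\,d\beta<\infty$ so that $P^{(\beta)}$ and $\nu_B$ are well defined, and if $p_{Y\mid X}$ is unbounded this may first require replacing $p_{Y\mid X}(\cdot\mid x)$ by its truncation at a level $K$ and sending $K\to\infty$ as an outer limit; the various interchanges of integrals and limits then need to be checked, which is routine but should be done with care.
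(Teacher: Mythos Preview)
Your proposal is correct and rests on the same underlying idea as the paper: approximate $P^n$ by a mixture of noise-free linear product laws $(P^{(\beta)})^n$ with $\beta$ drawn from an (improper, regularized) prior weighted by $c(\beta)^n$, and observe that the resulting mismatch is precisely the $h_\Omega(\bX)$-tilt of $P^n$. The two arguments differ only in execution. You regularize the flat prior on $\beta$ by truncation to the ellipsoid $\{\|\Omega^{-1/2}\beta\|\le B\}$ and send $B\to\infty$, whereas the paper (after reducing to $\Omega=\bI_d$ by invariance) uses a Gaussian weight $e^{-\|\beta\|^2/(2c)}$ and sends $c\to\infty$. More substantially, you work directly in $(\bX,\bY)$-space and obtain the density of $Q_B$ via the coarea formula; the paper instead works in $(\bX,\beta)$-space, introducing a coupling $\tilde P_*$ in which $(\bX,\bY)\sim P^n$ \emph{exactly} and $\beta\mid(\bX,\bY)$ is Gaussian on the null space of $\bX$, then compares the $(\bX,\beta)$-marginals $\tilde P$ and $\tilde Q$ and applies the data-processing inequality for the map $(\bX,\beta)\mapsto(\bX,\bX\beta)$. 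This replaces your coarea step by an explicit orthogonal change of variables, which makes the Jacobian $\det(\bx\bx^\top)^{1/2}$ appear without invoking the coarea formula. Both routes reach the same bound.

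One remark on the ``regularity nuisance'' you flag: it is lighter than you suggest. Your own coarea computation, combined with the trivial bound $V_B\le v_{d-n}B^{d-n}$, gives
\[
Z_B=\int_{\{\|\Omega^{-1/2}\beta\|\le B\}} c(\beta)^n\,d\beta \le \det(\Omega)^{1/2}\,v_{d-n}B^{d-n}\,\E_P[h_\Omega(\bX)]<\infty
\]
directly from the hypothesis $\E_P[h_\Omega(\bX)]<\infty$, so $\nu_B$ is well defined (and $c(\beta)<\infty$ for a.e.\ $\beta$) without any truncation of $p_{Y|X}$.
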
 
Proposition \ref{prop:general-bound-lambda} suggests that as long as $h_\Omega(\bX)= (\det(\bX \Omega \bX^\top ))^{-1/2}$ concentrates for some positive definite matrix $\Omega$, we expect $\lambda_{n,d}(P)\approx 0$. To make this intuition more precise, consider the case $\Omega = \mathbf{I}_d$, and observe that we can write the determinant as
\[h_{\mathbf{I}_d}(\bX)^{-1} = \det(\bX\bX^\top)^{1/2} = \|X_1\|_2 \cdot \|\mathbf{P}_{X_1}^\perp X_2\|_2\cdot \|\mathbf{P}_{X_1,X_2}^\perp X_3\|_2 \cdot \hdots \cdot \|\mathbf{P}_{X_1,\dots,X_{n-1}}^\perp X_n\|_2,\]
where for each $i$, the notation $\mathbf{P}_{X_1,\dots,X_i}^\perp$ denotes projection to the orthogonal complement of the span of $X_1,\dots,X_i\in\R^d$. Thus, as long as we expect concentration of the norms of the $X_i$'s, and also expect that the $X_i$'s are not likely to be strongly collinear, we can expect $h_{\mathbf{I}_d}(\bX)$ to have low variance (and so $\lambda_{n,d}(P)$ will be small). 

To make this more concrete, the next result derives an explicit upper bound on $\lambda_{n,d}(P)$ for the special case of a Gaussian distribution.\footnote{We show in the Appendix how this type of bound can be extended to more general distributions.}
\begin{corollary}\label{cor:Gaussian-error}
    Let $P$ be any distribution on $\R^d\times\R$ with a density, such that the marginal distribution of $X$ is $P_X = \mathcal{N}(0,\Sigma)$ for some positive definite $\Sigma\in\R^{d\times d}$. Then, if $d\geq n+2$,
    \[
    \lambda_{n,d}(P) \leq \frac{1}{2}\sqrt{\frac{n}{d-n-1}}.
    \]
\end{corollary}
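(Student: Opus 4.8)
The plan is to invoke Proposition~\ref{prop:general-bound-lambda} with the specific choice $\Omega = \Sigma^{-1}$, which is positive definite since $\Sigma$ is, and then to control the resulting expectation by a direct moment computation exploiting the Gaussianity of the design. With $\Omega = \Sigma^{-1}$ we have $h_\Omega(\bX) = \bigl(\det(\bX\Sigma^{-1}\bX^\top)\bigr)^{-1/2}$, and since $h_\Omega$ depends on the data only through $\bX$, whose rows are $n$ i.i.d.\ draws from $\mathcal{N}(0,\Sigma)$, we may write $\bX \stackrel{\textnormal{d}}{=} \bZ\Sigma^{1/2}$ with $\bZ\in\R^{n\times d}$ having i.i.d.\ $\mathcal{N}(0,1)$ entries. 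Then $\bX\Sigma^{-1}\bX^\top \stackrel{\textnormal{d}}{=} \bZ\bZ^\top$, so the law of $h_\Omega(\bX)$ does not depend on $\Sigma$, and it suffices to bound $\tfrac12\,\E\bigl[\,\bigl|h/\E[h] - 1\bigr|\,\bigr]$ for $h := (\det(\bZ\bZ^\top))^{-1/2}$.

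First I would identify the distribution of $\det(\bZ\bZ^\top)$ via the classical Bartlett decomposition: $\bZ\bZ^\top$ is a $\mathrm{Wishart}_n(d,\bI_n)$ matrix, whose determinant satisfies $\det(\bZ\bZ^\top) \stackrel{\textnormal{d}}{=} \prod_{i=1}^n \chi^2_{d-i+1}$ with the factors independent. Hence $h \stackrel{\textnormal{d}}{=} \prod_{i=1}^n (\chi^2_{d-i+1})^{-1/2}$. Next, by the Cauchy--Schwarz bound $\E\bigl[\,|h/\E[h]-1|\,\bigr] \le \sqrt{\E[h^2]/\E[h]^2 - 1}$, it remains to compute $\E[h]$ and $\E[h^2]$; both factorize over $i$, and the moment formula $\E[(\chi^2_k)^{-s}] = \Gamma(k/2-s)/(2^{s}\Gamma(k/2))$ (valid for $k>2s$) gives
\[\frac{\E[h^2]}{\E[h]^2} = \prod_{i=1}^n \frac{2\,\Gamma(k_i/2)^2}{(k_i-2)\,\Gamma((k_i-1)/2)^2}, \qquad k_i := d-i+1,\]
where all factors --- and in particular $\E[h^2]$, which we need finite to apply Proposition~\ref{prop:general-bound-lambda} --- are finite exactly when $d\ge n+2$, the smallest degrees of freedom being $k_n = d-n+1 > 2$.

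The crux is to replace each factor by an expression that telescopes. Log-convexity of $\Gamma$ gives $\Gamma(x)^2 \le \Gamma(x-\tfrac12)\Gamma(x+\tfrac12)$; taking $x = k/2$ yields $\Gamma(k/2)^2/\Gamma((k-1)/2)^2 \le \Gamma((k+1)/2)/\Gamma((k-1)/2) = (k-1)/2$, so each factor above is at most $(k_i-1)/(k_i-2)$. Substituting $k_i = d-i+1$ and telescoping,
\[\frac{\E[h^2]}{\E[h]^2} \;\le\; \prod_{i=1}^n \frac{d-i}{d-i-1} \;=\; \frac{d-1}{d-n-1},\]
whence $\E[h^2]/\E[h]^2 - 1 \le n/(d-n-1)$ and the claimed bound $\lambda_{n,d}(P) \le \tfrac12\sqrt{n/(d-n-1)}$ follows from Proposition~\ref{prop:general-bound-lambda}.

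I expect the main obstacle to be precisely this last step: the exact value of each factor $2\Gamma(k/2)^2/((k-2)\Gamma((k-1)/2)^2)$ is irrational and does not telescope on its own, so one needs the right rational upper bound --- supplied here by log-convexity of the Gamma function --- for the product to collapse cleanly. The remaining ingredients (reduction to a standard Gaussian design, the Bartlett decomposition of the Wishart determinant, the $L^1\le L^2$ inequality, and the inverse-moment formula for chi-squares) are all routine, though one should take care to track the finiteness conditions so that $d\ge n+2$ emerges as the natural range of validity.
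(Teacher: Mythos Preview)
Your proof is correct and complete, but it takes a genuinely different route from the paper in the main analytic step. Both arguments begin identically: apply Proposition~\ref{prop:general-bound-lambda} with $\Omega=\Sigma^{-1}$, reduce to the standard Gaussian design, and invoke the Bartlett decomposition $\det(\bZ\bZ^\top)\stackrel{\mathrm{d}}{=}\prod_i\chi^2_{d-i+1}$. They diverge on how to bound $\tfrac12\E[|h/\E[h]-1|]$. The paper uses a Pinsker-type inequality to pass to the KL-like quantity $\log(\E[h])-\E[\log h]$, then controls each summand via the digamma function, invoking Taylor's theorem together with monotonicity and concavity of $\psi$ to obtain a bound of $\tfrac{1}{2(d-1-j)}$ per term. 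You instead use Cauchy--Schwarz to pass to the $\chi^2$-type quantity $\E[h^2]/\E[h]^2-1$, bound each factor by $(k_i-1)/(k_i-2)$ via log-convexity of $\Gamma$, and telescope. Your route is more elementary---it avoids the digamma function altogether and the telescoping is transparent---while the paper's KL bound is in principle sharper than the $\chi^2$ bound; nonetheless both land on exactly the same constant $\tfrac12\sqrt{n/(d-n-1)}$.
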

In particular, we have $\lambda_{n,d}(P)\approx 0$ for any $d \gg n$ in the Gaussian setting, meaning that any valid lower bound on the model class risk of $\Flin^{(d)}$ is essentially trivial---in particular, this holds for, say, $d\propto n^{1+a}$ for any $a>0$, meaning that we may still have $d\ll n^2$. 

This result therefore offers another example for the ``in-between'' regime of Figure \ref{fig:tikz_intro}, which complements the findings of Section~\ref{sec:piecewise_constant} for our first example: it verifies that the low-complexity regime results of Theorem~\ref{thm:low_complexity} and Corollary~\ref{cor:low_complexity_interpolation_capacity} are essentially tight, in the sense that if $N(\Fcal,P) \gg n$, then we cannot provide a universal guarantee that a nontrivial lower bound is always possible for any $\Fcal$. A related question is also addressed by \citet[Theorem~3]{kong2019estimating}, who show that even when restricted to the case where $P$ is known to be Gaussian but nothing is to be assumed about the covariance matrix, it is impossible to distinguish between $R_P(\mathcal{F}_{\mathrm{lin}}^{(d)}) > 0$ and $R_P(\mathcal{F}_{\mathrm{lin}}^{(d)}) = 0$ when given access to only $n = \mathcal{O}(d)$ data points.

\section{Discussion}\label{sec:discussion}
In this paper, we have discussed a central question in learning theory: when can we show that a model class is ``wrong'', in the sense that even the best instance of a model in the class cannot achieve zero test error? We do so by considering the more general task of lower bounding the model class risk empirically and in a distribution-free manner. Focusing on the case of highly flexible model classes capable of often perfectly interpolating the data, we identify a regime of ``mild'' interpolation (where, although $\Fcal$ interpolates the data $\sample_n$, it is not able to interpolate $\gg n^2$ many data points), a setting in which we may still empirically be able to perform nontrivial inference on $\RPF$. In contrast, in the high-complexity setting (where $\Fcal$ can interpolate $\gg n^2$ many data points), we may say that we are in a regime of ``hyper-interpolation'', where nontrivial inference is no longer possible. Our examples show how these regimes are sharply characterized.

\subsection{Related literature}
We will next discuss connections
to related areas in the statistics and machine learning literature.

\paragraph{Learning theory and the approximation--estimation tradeoff.}
As discussed in Section~\ref{sec:intro}, the model class risk is closely related to the question of excess risk $R_P(\hat{f}) - \inf_{f\in\Fcal}R_P(f)$ and the approximation--estimation tradeoff (see, for example,~\cite{bartlett2002model, bottou2007tradeoffs}). Specifically, the problem of finding an upper bound on excess risk is essentially equivalent in difficulty to the problem of a lower bound for $\RPF$---and consequently, the hardness results established in our work can be interpreted as evidence that a distribution-free upper bound on excess risk is also challenging in a regime where the model class $\Fcal$ is highly complex.

On the other hand, there are many exciting recent results on controlling the excess risk under only mild distributional assumptions. For instance, parametric convergence rates of the excess risk $R_P(\hat{f}) - \inf_{f \in \Fcal} R_P(f)$ can be achieved when the loss satisfies certain regularity conditions and the algorithm used to select $\hat{f}$ from $\Fcal$ satisfies stability properties \citep{klochkov2021stability}. \cite{mourtada2021improper} leveraged the idea of stability of the empirical risk minimizer and came up with a technique to construct models $\hat{f}$ (which are potentially improper, i.e., may take values not contained in $\Fcal$) achieving low excess risk in logistic regression and conditional density estimation problems. These results offer a sort of converse to the hardness results established in this paper, by examining settings where nontrivial guarantees can be achieved.

\paragraph{Overparametrization and interpolation.} 
In recent years, an exciting topic in machine learning research has been the \emph{benign overfitting} phenomenon, sometimes also called \emph{double descent}, where an increase in a model class's capacity beyond the interpolation threshold can help to reduce generalization error of the eventually fitted function \citep{belkin2019reconciling}. In other words, fitting a model $\hat{f}$ with $\hat{R}(\hat{f},\sample_n)=0$ (meaning, implicitly, that we are searching within in a class $\Fcal$ with $N(\Fcal,P)\geq n$) can often lead to high accuracy, in contrast to what earlier results, based on generalization bounds from the learning theory literature, might suggest. There are a number of analyses for the generalization error upper bound for different overfitting estimators; see for example \citet{belkin2018overfitting, belkin2019optimality, bartlett2020benign}. A few recent works have also provided lower bounds on the excess risk for estimators with large training errors to illustrate that interpolation or memorization is necessary for optimal generalization, for example, see \cite{cheng2022memorize} and references therein.

\paragraph{Distribution-free risk control.}
As discussed earlier, 
the problem of a distribution-free upper bound $\hat{U}_\alpha(\Fcal,\cdot)$ on the model class risk is fundamentally simpler than that of a lower bound. Concretely, as described in Section~\ref{sec:upper_lower}, we might train a model $\hat{f}\in\Fcal$ using $\sample_{n/2}$ (the first half of the available data set $\sample_n$), and then use the remaining data $\sample_{n/2}'$ to provide an upper bound on $\RPF$. If the loss function $\ell$ takes values in a bounded range $[0,B]$, then by Hoeffding's inequality~\citep{hoeffding1962probability}, with probability $\geq 1-\alpha$,
\[\RPF \leq  R_P(\hat{f}) \leq \hat{R}\big(\hat{f}, \sample_{n/2}'\big) + B\sqrt{\frac{\log(1/\alpha)}{2\lfloor n/2\rfloor}}=:\hat{U}_{\alpha}(\Fcal,\sample_n),\]
thus providing a distribution-free upper bound in the sense of Definition~\ref{def:L_U_DF}.
Such high-probability upper confidence bounds are the starting point for the distribution-free risk control methodology of \cite{bates2021distribution}; see also \cite{angelopoulos2022learntestcalibratingpredictive,angelopoulos2023conformalriskcontrol} for related approaches.

An important existing result on distribution-free inference on a model class risk is given by \citet[Theorem~8.5]{devroye2013probabilistic}, which illustrates that in a distribution-free classification setting, the task of estimating the Bayes error leads to a constant minimax lower bound.

\paragraph{Hardness of distribution-free inference.} Our work also contributes to an important line of work on impossibility or hardness results in distribution-free inference. For example, the classical work \cite{bahadur1956nonexistence} showed that inference for the mean of a random variable is impossible without any assumption on the distribution. More recent distribution-free hardness results have been established for problems such as predictive inference with conditional coverage \citep{vovk2012conditional,lei2014distribution,barber2021limits}, and inference on the conditional mean or median \citep{barber2020distribution,medarametla2021distribution}. Hardness results for assumption-free inference for algorithmic properties such as stability and risk have been established in \cite{kim2023black,luo2024limits}. This last result, which studies the problem of providing distribution-free inference on $\E[R_P(\hat{f})]$ for a model $\hat{f}$ fitted by some black-box algorithm $\mathcal{A}$, is the closest to our work in terms of the questions being investigated, but is still substantially different: while \citet{luo2024limits} study the problem of evaluating the risk of a model fitted by a specific algorithm $\mathcal{A}$, here we instead ask about the infimum risk $\inf_{f\in\Fcal}R_P(f)$ regardless of whether it is possible to construct an algorithm $\mathcal{A}$ that will identify an (approximately) optimal model.

\subsection{Extensions and open questions}
To conclude, we will briefly mention some possible directions for extensions and further research suggested by this work.
First, the questions examined in this work focus on a single model class $\Fcal$, but in practice we may want to choose $\Fcal$ in an adaptive way, by examining the performance of various fitted models on the data. In particular, returning to the approximation--estimation tradeoff, one approach to find a balance between these two conflicting goals is known as \emph{structural risk minimization}. Here, the idea is to consider a nested sequence of model classes $\Fcal_1 \subseteq\Fcal_2\subseteq\dots$, and to choose $\Fcal = \Fcal_{j_n}$ based on a suitable increasing sequence $j_n$, $n \geq 1$, or to use complexity regularization \citep{bartlett2002model, vonluxburg2008statisticallearningtheorymodels}. For instance, in a regression setting, $\Fcal_j$ may be a sequence of increasingly more complex neural network architectures that can approximate ever more complicated functions \citep{barron1994approximation}. In this setting, our results might be useful for identifying scenarios in which it is possible, or impossible, to use empirical evidence to help navigate this tradeoff. 

Second, recall that in this work we have identified an ``in-between'' regime: as summarized in Figure~\ref{fig:tikz_intro}, if the interpolation capacity of $\Fcal$ lies between $n$ and $\mathcal{O}(n^2)$, then it may be possible or impossible to provide distribution-free inference on $\RPF$. Our two examples (in Section~\ref{sec:boundary}) show that these boundaries cannot be improved---no universal guarantee of nontrivial inference can be achieved outside of the low-complexity regime, and no universal hardness result can be shown outside of the high-complexity regime.  However, an important
open question remains: might there be an alternative notion of model class complexity (i.e., different from the interpolation capacity), with which it would be possible to identify a universal phase transition directly from the low-complexity to high-complexity regime?

\acks{R.F.B. was partially supported by the National Science Foundation via grant DMS-2023109, and by the Office of Naval Research via grant N00014-24-1-2544. We thank John Duchi for bringing helpful references to our attention.}

\bibliography{mybib}

\clearpage

\numberwithin{theorem}{section}
\numberwithin{equation}{section}
\appendix

\section{Proofs and extensions for results in Section~\ref{sec:extremes}}\label{app:additional_proofs}

\subsection{A tighter lower bound for the low-complexity regime} \label{app:a-tighter-bound}

In this section, we present a stronger version of Theorem~\ref{thm:low_complexity}, for the setting of a bounded loss.
\begin{theorem}\label{thm:low_complexity_bounded_loss}
Fix $\alpha\in(0,1)$, $n\geq1$, and model class $\Fcal$. Assume that the loss function $\ell$ is bounded, taking values in $[0,B]$ for some $B>0$.
Define $\Delta_n\in (0,1]$ as the unique solution of\footnote{Note that, since $\Delta\mapsto -\Delta-\log(1-\Delta)$ is strictly increasing on $[0,1)$, with $\lim_{\Delta\searrow 0}\{-\Delta-\log(1-\Delta)\} = 0$ and $\lim_{\Delta\nearrow 1}\{-\Delta-\log(1-\Delta)\} = +\infty$, this is well-defined for any value $\RhatFn\geq 0$. In particular if $\RhatFn=0$, then this equation is solved by $\Delta_n=1$.}
\begin{equation}\label{eqn:compute_Delta_n}-\Delta_n - \log(1-\Delta_n) = \frac{B\log(1/\alpha)}{n\RhatFn}.\end{equation}
Then
\[\hat{L}_\alpha^{\mathrm{ERM},B}(\Fcal,\sample_n) := (1-\Delta_n) \RhatFn\]
is a valid distribution-free lower bound on $\RPF$. 
\end{theorem}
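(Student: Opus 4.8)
The plan is to prove the equivalent statement $\P_P\{\hat{L}_\alpha^{\mathrm{ERM},B}(\Fcal,\sample_n) > \RPF\}\le\alpha$ via a one-sided Chernoff bound, after some bookkeeping that exploits the fact that $\Delta_n$ is a deterministic function of the single random quantity $\RhatFn$. If $\RPF=0$, then by~\eqref{eqn:lower_bound_E} we have $\RhatFn=0$ almost surely, so $\hat{L}_\alpha^{\mathrm{ERM},B}(\Fcal,\sample_n)=0\le\RPF$ with probability one; so assume $\RPF>0$. Write $\phi(\Delta):=-\Delta-\log(1-\Delta)$, which is continuous and strictly increasing on $[0,1)$ from $\phi(0)=0$ to $+\infty$ (as noted in the footnote), and set $g(r):=\phi^{-1}\!\bigl(B\log(1/\alpha)/(nr)\bigr)$ for $r>0$, $g(0):=1$, so that $\Delta_n=g(\RhatFn)$ and $\hat{L}_\alpha^{\mathrm{ERM},B}(\Fcal,\sample_n)=\psi(\RhatFn)$ with $\psi(r):=(1-g(r))r$. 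I would then record the elementary facts that $g$ is continuous on $[0,B]$ and strictly decreasing on $(0,B]$, hence $\psi$ is continuous, strictly increasing on $[0,B]$, with $\psi(r)<r$ for $r>0$. Since the loss lies in $[0,B]$ we have $\RhatFn\le B$ and $\RPF\le B$; so if $\RPF>\psi(B)$ then $\hat{L}_\alpha^{\mathrm{ERM},B}(\Fcal,\sample_n)=\psi(\RhatFn)\le\psi(B)<\RPF$ deterministically and there is nothing to prove, while otherwise I set $r^\ast:=\psi^{-1}(\RPF)$, note $r^\ast>\RPF$ since $\psi(\RPF)<\RPF$, and use strict monotonicity of $\psi$ to get the exact event identity
\[
\bigl\{\hat{L}_\alpha^{\mathrm{ERM},B}(\Fcal,\sample_n)>\RPF\bigr\}=\bigl\{\RhatFn>r^\ast\bigr\}.
\]

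Next I would reduce from the model class to a single model. Suppose first that some $f^\ast\in\Fcal$ attains $R_P(f^\ast)=\RPF$ (the general case is handled at the end). Since $\RhatFn\le\hat{R}(f^\ast,\sample_n)$ pointwise, it suffices to bound $\P_P\{\hat{R}(f^\ast,\sample_n)>r^\ast\}$. The variables $W_i:=\ell(f^\ast,Z_i)/B\in[0,1]$ are i.i.d.\ with $\sum_{i=1}^n\E_P[W_i]=n\RPF/B=:M$, and the standard Chernoff bound for a sum of independent $[0,1]$-valued variables gives, for any $t\ge M$,
\[
\P_P\Bigl\{\textstyle\sum_{i=1}^n W_i\ge t\Bigr\}\le\exp\bigl(-(t\log(t/M)-t+M)\bigr).
\]
Taking $t=nr^\ast/B$ (which exceeds $M$ because $r^\ast>\RPF$) yields $\P_P\{\hat{R}(f^\ast,\sample_n)>r^\ast\}\le\exp\bigl(-\tfrac{n}{B}(r^\ast\log(r^\ast/\RPF)-r^\ast+\RPF)\bigr)$.

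It remains to simplify this exponent using the defining equation of $r^\ast$. By construction $\RPF=\psi(r^\ast)=(1-\Delta^\ast)r^\ast$ where $\Delta^\ast:=g(r^\ast)$ solves $-\Delta^\ast-\log(1-\Delta^\ast)=B\log(1/\alpha)/(nr^\ast)$, so $\log(r^\ast/\RPF)=-\log(1-\Delta^\ast)$ and $\RPF-r^\ast=-\Delta^\ast r^\ast$, whence
\[
r^\ast\log(r^\ast/\RPF)-r^\ast+\RPF=r^\ast\bigl(-\log(1-\Delta^\ast)-\Delta^\ast\bigr)=r^\ast\cdot\frac{B\log(1/\alpha)}{nr^\ast}=\frac{B\log(1/\alpha)}{n},
\]
so the bound collapses exactly to $\exp(-\log(1/\alpha))=\alpha$, as desired. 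The main thing to get right is the bookkeeping in the first paragraph: because $\Delta_n$ depends on the data only through $\RhatFn$, one must track the monotone map $\psi$ carefully to rewrite the target as a clean one-sided event for $\RhatFn$, and one must observe that the implicit equation~\eqref{eqn:compute_Delta_n} is engineered precisely so that the Chernoff exponent equals $\log(1/\alpha)$ on the nose; the probabilistic input is otherwise routine. The one remaining technicality is the case where the infimum defining $\RPF$ is not attained: there I would take $f_\varepsilon\in\Fcal$ with $R_P(f_\varepsilon)\le\RPF+\varepsilon$, replace $\RPF$ by $R_P(f_\varepsilon)$ in the last display (valid once $\varepsilon<r^\ast-\RPF$, so that $t>M$ still holds), and let $\varepsilon\searrow0$, using continuity in $\varepsilon$ of the resulting bound, whose limit is $\alpha$.
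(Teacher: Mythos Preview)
Your proof is correct and follows essentially the same approach as the paper: both arguments reduce to a single near-optimal $f_\varepsilon$, apply the same multiplicative Chernoff bound for sums of $[0,1]$-valued variables, and exploit the fact that the defining equation for $\Delta_n$ is tuned so the exponent equals $\log(1/\alpha)$ exactly. The only organizational difference is that you invert the monotone map $\psi(r)=(1-g(r))r$ to obtain the exact event identity $\{\psi(\RhatFn)>\RPF\}=\{\RhatFn>r^\ast\}$, whereas the paper works from the other side---defining $\delta_n$ via $h(\delta_n)=B\log(1/\alpha)/(n\RPF)$, applying Chernoff to get $\P\{\RhatFn\le(1+\delta_n)\RPF\}\ge1-\alpha$, and then introducing an auxiliary $\Delta_n'=\delta_n/(1+\delta_n)$ to check the implication $\RhatFn\le(1+\delta_n)\RPF\Rightarrow(1-\Delta_n)\RhatFn\le\RPF$; since in fact $(1+\delta_n)\RPF=r^\ast$, the two arguments are applying the same bound at the same threshold, and your monotonicity formulation simply replaces the paper's one-sided implication by an equality of events.
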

As for the lower bound constructed in Theorem~\ref{thm:low_complexity}, we again have $\hat{L}_\alpha^{\mathrm{ERM},B}(\Fcal,\sample_n)>0$ if and only if $\RhatFn>0$ (because this event is equivalent to $\Delta_n<1$).
But the theorem can be interpreted in a more quantitative way:
since $-\Delta_n - \log(1-\Delta_n)\geq  \Delta_n^2/2$, by~\eqref{eqn:compute_Delta_n} we have
\begin{equation} \label{ineq:easier-lower-bound}
   \hat{L}_\alpha^{\mathrm{ERM},B}(\Fcal,\sample_n) \geq  \biggl(1 - \sqrt{\frac{2B\log(1/\alpha)}{n\RhatFn}}\biggr) \RhatFn = \RhatFn - \sqrt{\RhatFn \cdot \frac{2B\log(1/\alpha)}{n}}. 
\end{equation}
In other words, our lower bound is of the form $(1-\mathrm{o}(1))\cdot \RhatFn$. 

In fact, this type of result can be applied to the setting of an unbounded loss, as well, via a truncation step. For any $f\in\Fcal$, define its truncated empirical risk as $\hat{R}(f,\sample_n;B) = \frac{1}{n}\sum_{i=1}^n \min\{\ell(f,Z_i),B\}$, and define $\RhatFnB = \inf_{f \in \Fcal}\hat{R}(f,\sample_n;B)$.
\begin{theorem}\label{thm:low_complexity_trunc_loss}
Fix $\alpha\in(0,1)$, $n\geq1$, and model class $\Fcal$. Fix any $B>0$, and
define $\Delta_{n,B}\in (0,1]$ as the unique solution of
\[-\Delta_{n,B} - \log(1-\Delta_{n,B}) = \frac{B\log(1/\alpha)}{n\RhatFnB}.\]
Then
\[\hat{L}_\alpha^{\mathrm{ERM-trunc},B}(\Fcal,\sample_n) := (1-\Delta_{n,B}) \RhatFnB\]
is a valid distribution-free lower bound on $\RPF$. 
\end{theorem}
To help interpret this lower bound, we can observe that as in~\eqref{ineq:easier-lower-bound} above, it holds that
\begin{multline}\label{ineq:easier-lower-bound-trunc}\hat{L}_\alpha^{\mathrm{ERM-trunc},B}(\Fcal,\sample_n) \geq \RhatFnB - \sqrt{\RhatFnB \cdot \frac{2B\log(1/\alpha)}{n}}\\\geq \RhatFnB - \sqrt{\frac{2B^2\log(1/\alpha)}{n}},\end{multline}
where the last step holds simply because $\RhatFnB\leq B$ by construction.

The proofs of these results rely on the following lemma.
\bigskip

\begin{lemma}\label{lemma:empirical_multiplicative_chernoff}
    Let $Z_1,\ldots, Z_n$ be independent random variables taking values in $[0,1]$. Denote $S := \sum_{i=1}^nZ_i$ and $\mu := \E[S]$. Fix $\alpha \in (0,1)$ and let $\Delta \in (0,1]$ be the unique solution to
    \[
    -\Delta -\log(1-\Delta)=\log(1/\alpha)/S.
    \]
    Then
    \[
    \P\bigl((1-\Delta)S\leq \mu\bigr)\geq 1-\alpha.
    \]
\end{lemma}

\begin{proof}[Proof of Lemma~\ref{lemma:empirical_multiplicative_chernoff}]
First we consider the degenerate case where $\mu=0$, i.e., $Z_i=0$ almost surely for each $i$. Then, almost surely, $S=0$ and $\Delta = 1$, and the result holds trivially. From this point on, then, we assume $\mu>0$ (note that we might still have $S=0$ with positive probability).

    By a multiplicative Chernoff bound \citep[Theorem~2.3(b)]{mcdiarmid1998concentration}, we have
    \[
        \P\Bigl\{S \leq (1+\delta)\mu\Bigr\} \geq 1 - \exp\bigl(-h(\delta)\mu\bigr)
    \]
    for $\delta > 0$, where $h(\delta)=(1+\delta)\log(1+\delta)-\delta$. Since $h$ is continuous and strictly increasing on $[0,\infty)$, with $h(0)=0$ and $h(\delta) \rightarrow \infty$ as $\delta \rightarrow \infty$, there exists a unique $\delta_\mu > 0$ such that $h(\delta_{\mu})=\log(1/\alpha)/\mu$, so that we have
    \[
        \P\Bigl\{S \leq (1+\delta_{\mu})\mu\Bigr\} \geq 1 - \alpha.
    \]
    To complete the proof, we therefore only need to verify that
     \[
        S \leq (1 + \delta_\mu)\mu  \ \Longrightarrow \ (1-\Delta)S \leq \mu .
     \]
    First, define
    \[\Delta' = \frac{\delta_\mu}{1+\delta_\mu} \in (0,1).\]
    Then $1+\delta_\mu = 1/(1-\Delta')$, and so
    \[S \leq (1 + \delta_\mu)\mu  \ \Longrightarrow \ (1-\Delta')S \leq \mu .\]
    Our last step is to check that, on the event $(1-\Delta')S \leq \mu$, we have $\Delta'\leq \Delta$. We have
    \[\frac{\log(1/\alpha)}{\mu} = h(\delta_\mu) = (1+\delta_\mu)\log(1+\delta_\mu) - \delta_\mu = \frac{1}{1-\Delta'}\log\left(\frac{1}{1-\Delta'}\right) - \frac{\Delta'}{1-\Delta'},\]
    so rearranging terms,
    \[-\Delta' - \log(1-\Delta') = (1-\Delta') \cdot \frac{\log(1/\alpha)}{\mu} \leq \frac{\log(1/\alpha)}{S} = -\Delta - \log(1-\Delta),\]
    where the inequality holds on the event that $(1-\Delta')S \leq \mu$, and 
    the last step holds by definition of $\Delta$. Finally, since $u\mapsto -u - \log(1-u)$ is a strictly increasing function on $[0,1)$, this implies $\Delta'\leq \Delta$, which completes the proof. 
\end{proof}

We are now in a position to state the proofs of Theorems~\ref{thm:low_complexity_bounded_loss} and~\ref{thm:low_complexity_trunc_loss}.\bigskip

\begin{proof}[Proof of Theorem~\ref{thm:low_complexity_bounded_loss}]
This result is simply a special case of Theorem~\ref{thm:low_complexity_trunc_loss} (since, when the loss $\ell$ takes values in $[0,B]$, we have $\RhatFnB=\RhatFn$).
\end{proof}
\begin{proof}[Proof of Theorem~\ref{thm:low_complexity_trunc_loss}]
For any $\varepsilon>0$, we can find some $f_\varepsilon\in\Fcal$ such that $\RPF\leq R_P(f_\varepsilon)\leq \RPF + \varepsilon$. Noting that $\frac{\hat{R}(f_\varepsilon, \sample_n;B)}{B}$ is an average of $n$ i.i.d.~terms, which each lie in $[0,1]$, we can apply Lemma~\ref{lemma:empirical_multiplicative_chernoff}, by taking $\Delta_\varepsilon$ to be the unique solution to 
    \[
    -\Delta_\varepsilon-\log(1-\Delta_\varepsilon)=\frac{B\log(1/\alpha)}{n\hat{R}(f_\varepsilon,\sample_n;B)}
    \]
    to see that
    \begin{multline*}
    		1-\alpha\leq \P\biggl\{(1 - \Delta_\varepsilon)\hat{R}(f_\varepsilon, \sample_n;B) \leq  \E[\hat{R}(f_\varepsilon, \sample_n;B)]\biggr\} \\\leq \P\biggl\{(1 - \Delta_\varepsilon)\RhatFnB \leq  \E[\hat{R}(f_\varepsilon, \sample_n;B)]\biggr\},
    \end{multline*}
    where the last step holds 
    since $\hat{R}(f_\varepsilon,\sample_n;B)\geq \RhatFnB$. And, again using the fact that $\hat{R}(f_\varepsilon,\sample_n;B)\geq \RhatFnB$, we have 
    \[
    -\Delta_\varepsilon-\log(1-\Delta_\varepsilon)=\frac{B\log(1/\alpha)}{n\hat{R}(f_\varepsilon,\sample_n;B)} \leq \frac{B\log(1/\alpha)}{n\RhatFnB}=-\Delta_{n,B}-\log(1-\Delta_{n,B})
    \]
    by definition of $\Delta_{n,B}$.
 Since $u \mapsto -u-\log(1-u)$ is strictly increasing on $[0,1)$, it follows that we must have $\Delta_\varepsilon \leq \Delta_{n,B}$. We conclude
    \begin{align*}
    		1 - \alpha \leq  \P\biggl\{(1 - \Delta_{n,B})\hat{R}(\Fcal, \sample_n;B) \leq  \E[\hat{R}(f_\varepsilon, \sample_n;B)]\biggr\}.
    \end{align*}
    Next, we also have
    \[\E[\hat{R}(f_\varepsilon, \sample_n;B)]\leq
    \E[\hat{R}(f_\varepsilon, \sample_n)] = R_P(f_\varepsilon) \leq \RPF + \varepsilon, \]
    where the first step holds since $\hat{R}(f,\sample_n;B)\leq \hat{R}(f,\sample_n)$ for any $f\in\Fcal$ by definition of the truncated empirical risk, and the last step holds by definition of $f_\varepsilon$. Therefore,
    \[\P\biggl\{(1 - \Delta_{n,B})\hat{R}(\Fcal, \sample_n;B) \leq  \RPF + \varepsilon\biggr\} \geq 1-\alpha,\]
    and since $\varepsilon > 0$ can be chosen to be arbitrarily small, the result follows.
\end{proof}

\subsection{Proof of Theorem~\ref{thm:high_complexity} (the hardness result)}\label{sec:proof_thm:high_complexity}
For any fixed values $z_1,\dots,z_N\in\Zcal$, let $Q=\frac{1}{N}\sum_{i=1}^N \delta_{z_i}$ denote their empirical distribution (where $\delta_z$ is the point mass at $z$). Letting $I_1,\dots,I_n\iidsim\Unif([N])$, we then see that $(z_{I_1},\dots,z_{I_n})$ is a data set that contains $n$ i.i.d.\ draws from $Q$. Since $\hat{L}_\alpha(\Fcal,\cdot)$ is a valid distribution-free lower bound, it must therefore be valid as a lower bound for $R_Q(\Fcal)$ when applied to data sampled from $Q$---and therefore,
\[\P\left\{\hat{L}_\alpha(\Fcal,(z_{I_1},\dots,z_{I_n})) \leq \hat{R}(\Fcal,(z_1,\dots,z_N))\right\}\geq 1-\alpha,\]
by noting that $R_Q(\Fcal) = \hat{R}(\Fcal,(z_1,\dots,z_N))$ by definition of $Q$ as an empirical distribution.
Next, consider indices $J_1,\dots,J_n\in[N]$ sampled uniformly \emph{without} replacement. By a total variation distance bound on the difference between sampling with and without replacement (e.g., \citet{stam1978distance}; see also \citet[Lemma 4.15]{angelopoulos2024theoretical}), we therefore have
\[\P\left\{\hat{L}_\alpha(\Fcal,(z_{J_1},\dots,z_{J_n})) \leq \hat{R}(\Fcal,(z_1,\dots,z_N))\right\}\geq 1-\alpha - \frac{n^2}{2N}.\]
Since this is true for any fixed sequence of values $z_1,\dots,z_N\in\Zcal$, it is also true for randomly sampled values, $Z_1,\dots,Z_N\iidsim P$---that is, we have
\[\P\left\{\hat{L}_\alpha(\Fcal,(Z_{J_1},\dots,Z_{J_n})) \leq \hat{R}(\Fcal,(Z_1,\dots,Z_N))\right\}\geq 1-\alpha -\frac{n^2}{2N},\]
where now the probability is computed with respect to both $Z_i$'s and $J_i$'s. But since the data points are i.i.d., by symmetry it is equivalent to write
\[\P\left\{\hat{L}_\alpha(\Fcal,(Z_1,\dots,Z_n)) \leq \hat{R}(\Fcal,(Z_1,\dots,Z_N))\right\}\geq 1-\alpha-\frac{n^2}{2N},\]
which proves the desired claim.

\section{Proofs and extensions for results in Section~\ref{sec:piecewise_constant}}

\subsection{Proof of Theorem~\ref{thm:piecewise_constant_example} (piecewise constant functions)}
Fix any $\varepsilon>0$, and choose some $f_\varepsilon\in\Fpwc^{(m)}$ with $R_P(f_\varepsilon) \leq \inf_{f\in\Fpwc^{(m)}}R_P(f) + \varepsilon = R_P(\Fpwc^{(m)}) + \varepsilon$. 
By definition of the model class, we can express $f_\varepsilon$ as
    \[f_\varepsilon(x) = \sum_{j=1}^m y_j\cdot\one_{x\in A_j},\]
for some $y_1,\dots,y_m\in \R$ and some partition $\R^d=A_1\cup\dots\cup A_m$.
Now define 
\[I(\sample_n) = \left\{j\in[m] : \sum_{i=1}^n\one_{X_i\in A_j} > 0\right\}\subseteq [m],\]
which indexes those sets $A_j$ that were observed at least once in the data set. Define a function
\[\hat{f}_\varepsilon(x) = \sum_{j\in I(\sample_n)} y_j\cdot\one_{x\in A_j} + y_{\min I(\sample_n)}\cdot\one\Big\{x\in \bigcup_{j\in[m]\backslash I(\sample_n)} A_j\Big\}.\]
We can observe that $\hat{f}_\varepsilon\in\Fpwc^{(|I(\sample_n)|)}$, since the function's output always lies in $\{y_j : j\in I(\sample_n)\}$, and moreover, $\hat{f}_\varepsilon(X_i) = f_\varepsilon(X_i)$ for all $i\in[n]$, by construction. 
On the event that $|I(\sample_n)|\leq n-1$,
we then have $\Fpwc^{(n-1)}\supseteq\Fpwc^{(|I(\sample_n)|)}\ni\hat{f}_\varepsilon$. Thus
\[\textnormal{If $|I(\sample_n)|\leq n-1$ \ then \, $\hat{R}(f_\varepsilon,\sample_n) = \hat{R}(\hat{f}_\varepsilon,\sample_n) \geq \hat{R}(\Fpwc^{(n-1)},\sample_n) = \alpha_1^{-1} \cdot \hat{L}_\alpha^{\mathrm{pwc}}(\Fpwc^{(m)},\sample_n)$.}\] 
Therefore, 
\begin{multline*}\P_P\left\{\hat{L}_\alpha^{\mathrm{pwc}}(\Fpwc^{(m)},\sample_n) \leq R_P(f_\varepsilon)\right\} \\\geq  \P_P\big\{\hat{R}(f_\varepsilon,\sample_n) \leq \alpha_1^{-1} R_P(f_\varepsilon)\textnormal{ and }|I(\sample_n)|\leq n-1\big\} \geq 1-\alpha_0-\alpha_1 = 1-\alpha,\end{multline*}
where the second step holds since $\hat{R}(f_\varepsilon,\sample_n) \leq \alpha_1^{-1} R_P(f_\varepsilon)$ with probability $\geq 1-\alpha_1$ by Markov's inequality, and $\P_P\{|I(\sample_n)|\leq n-1\}\geq 1-e^{-\frac{n(n-1)}{2m}}\geq 1- \alpha_0$  by Lemma~\ref{lem:occupancy} below together with our assumption on $m$. 
Recalling the definition of $f_\varepsilon$, we have therefore proved that
\[\P_P\left\{\hat{L}_\alpha^{\mathrm{pwc}}(\Fpwc^{(m)},\sample_n) \leq R_P(\Fpwc^{(m)}) + \varepsilon\right\} \geq 1-\alpha.\]
Since $\varepsilon>0$ can be taken to be arbitrarily small, this completes the proof of the theorem.

\begin{lemma}\label{lem:occupancy}
    \sloppy Consider a discrete distribution $P_Y$ with support $\{y_1,\dots,y_m\}$, and let $Y_1,\dots,Y_n\iidsim P_Y$. Let
    \[I = \sum_{j=1}^m \one\left\{\sum_{i=1}^n\one_{Y_i=y_j}>0\right\}\]
    denote the number of unique values observed in the sample. Then
    \[\P_{P_Y}\left\{I \leq n-1\right\} \geq 1-e^{-\frac{n(n-1)}{2m}}.\]
\end{lemma}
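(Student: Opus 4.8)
The plan is to bound the probability that $I \geq n$, i.e.\ that all $n$ draws land in distinct bins. First I would observe that the event $\{I \leq n-1\}$ is implied by the event $\{Y_i = Y_j \text{ for some } i < j\}$ (if two observations coincide, we see at most $n-1$ distinct values), so it suffices to show $\P\{Y_i \neq Y_j \text{ for all } i<j\} \leq e^{-n(n-1)/(2m)}$. This reframes the statement as a ``birthday problem'' collision bound, where the non-uniformity of $P_Y$ should only help us.

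The key computational step is to control $\P\{\text{all }Y_i\text{ distinct}\}$ for a general (possibly non-uniform) distribution on $m$ points. Writing $p_1,\dots,p_m$ for the probabilities, a clean way forward is to condition sequentially: $\P\{\text{all distinct}\} = \prod_{i=1}^{n-1} \P\{Y_{i+1}\notin\{Y_1,\dots,Y_i\} \mid Y_1,\dots,Y_i\text{ distinct}\}$. Given $i$ distinct values already observed, the conditional probability of avoiding them is $1 - \sum_{j \in \text{observed}} p_j$. The worst case (largest avoidance probability) is when the observed $p_j$ are as small as possible; since there are $m-i$ unobserved bins carrying total mass $1 - \sum_{\text{observed}} p_j$, and one can show by a symmetrization/exchangeability argument that the expected value of $\sum_{\text{observed}} p_j$ after $i$ distinct draws is at least $i/m$ (each draw, in expectation over the uniform-random labeling, contributes at least its ``fair share''), we get $\mathbb{E}[\text{avoidance prob}] \leq 1 - i/m$. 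Multiplying over $i=1,\dots,n-1$ and using $1-x \leq e^{-x}$ yields $\P\{\text{all distinct}\} \leq \prod_{i=1}^{n-1}(1 - i/m) \leq \exp\big(-\sum_{i=1}^{n-1} i/m\big) = e^{-n(n-1)/(2m)}$.

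Alternatively, and perhaps more cleanly, I would reduce directly to the uniform case: a standard coupling/majorization argument shows that among all distributions on $m$ atoms, the uniform distribution $\Unif(\{y_1,\dots,y_m\})$ \emph{maximizes} the probability that $n$ i.i.d.\ draws are all distinct (this is a Schur-concavity statement in $(p_1,\dots,p_m)$ for the function $p \mapsto \P\{\text{all distinct}\} = \sum_{j_1,\dots,j_n \text{ distinct}} p_{j_1}\cdots p_{j_n}$, which is symmetric and concave-like in the relevant sense). Then for the uniform distribution the collision probability is exactly $\prod_{i=1}^{n-1}(1 - i/m)$, and applying $1 - i/m \leq e^{-i/m}$ gives the bound $e^{-n(n-1)/(2m)}$ immediately.

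The main obstacle is making the non-uniform bound rigorous: either establishing the Schur-concavity/majorization claim that uniform is the worst case, or carrying out the conditional argument carefully enough to justify that the conditional avoidance probabilities multiply to something no larger than $\prod_{i=1}^{n-1}(1-i/m)$. Both are standard but require a little care with the conditioning; the cleanest route is likely the majorization argument, after which everything reduces to the elementary inequality $\prod_{i=1}^{n-1}(1-i/m) \leq \exp(-n(n-1)/(2m))$.
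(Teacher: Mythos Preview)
Your second approach is correct and is essentially the same as the paper's proof: the paper also observes that $\P\{I\leq n-1\}=1-\P\{\textnormal{all distinct}\}$, reduces to the uniform case by citing \cite{munford1977birthday} for the fact that $p\mapsto \sum_{|S|=n}\prod_{i\in S}p_i$ is maximized at the uniform vector (your Schur-concavity/majorization claim), and then bounds $\prod_{i=1}^{n-1}(1-i/m)\leq e^{-n(n-1)/(2m)}$. Your first approach, as you note, has a gap---the conditional avoidance probabilities are random, and the step ``$\E[\textnormal{avoidance prob}]\leq 1-i/m$'' together with multiplying does not directly control the expectation of the product---so the clean route is indeed the majorization reduction.
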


Proving bounds on this quantity $I$ is related to the \emph{occupancy problem} in probability theory (e.g., see \citet{ben2017concentration}); here the bound we need is slightly different from the type of results in the existing literature.
\begin{proof}[Proof of Lemma~\ref{lem:occupancy}]
If $m < n$, the result is immediate. Suppose from this point on then that $m \geq n$. Since $I$ takes values in the set $[n]$, we have $\P_{P_Y}\{I\leq n-1\} = 1 - \P_{P_Y}\{I=n\}$. In order to have $I=n$ we must have all observations $Y_1,\dots,Y_n$ distinct, meaning that
\[\P_{P_Y}\{I=n\}
    =\sum_{\textnormal{distinct }i_1,\dots,i_n\in[m]}\prod_{j=1}^n p_{i_j}
    =n! \cdot\sum_{\substack{S\subseteq[m]\\ |S|=n}}\prod_{i\in S}p_i = n! \cdot f(p),\]
    where $p_j := \P_{P_Y}(Y_1 = y_j)$, and where for a probability vector $p=(p_1,\dots,p_m)$ we define $f(p) = \sum_{\substack{S\subseteq[m]\\ |S|=n}}\prod_{i\in S}p_i $. The function $f$ attains its maximum at $p=(\frac{1}{m},\dots,\frac{1}{m})$ \citep{munford1977birthday}, and therefore,
\begin{multline*}\P_{P_Y}\{I=n\} = n!\cdot f(p) \\\leq n! \cdot f\left(\Big(\frac{1}{m},\dots,\frac{1}{m}\Big)\right) = n! \cdot {m\choose n} \left(\frac{1}{m}\right)^n = \frac{m(m-1)\hdots(m-n+1)}{m^n} \\= \left(1-\frac{1}{m}\right)\cdot\hdots\cdot\left(1-\frac{n-1}{m}\right)\leq e^{-1/m}\cdot \hdots\cdot e^{-(n-1)/m} = e^{-\frac{n(n-1)}{2m}}.\end{multline*}
\end{proof}

\subsection{Extensions for the piecewise constant example}
Next, we state and prove several extensions of Theorem~\ref{thm:piecewise_constant_example}, to give a more precise characterization of how the effective complexity $m$ of the model class $\Fpwc^{(m)}$ influences the behavior of the lower bound.
\begin{theorem}\label{thm:piecewise_constant_example_extension}
    Fix $\alpha\in(0,1)$, $n\geq1$, and $m\geq 1$. Let $\alpha_0+\alpha_1=\alpha$, with $\alpha_0,\alpha_1>0$.
    Then
    \[\hat{L}_\alpha^{\mathrm{pwc},r}(\Fpwc^{(m)},\sample_n) :=  
    \alpha_1 \cdot \hat{R}(\Fpwc^{(n-r)},\sample_n)\]
    is a valid distribution-free lower bound on $R_P(\Fpwc^{(m)})$, where
    \[r := \left\lceil \left(\frac{n(n-1)}{n+2m}  -  2\sqrt{\frac{n(n-1)}{n+2m} \log(1/\alpha_0)}\right)_+\right\rceil.\] 
\end{theorem}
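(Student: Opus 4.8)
The plan is to mimic the proof of Theorem~\ref{thm:piecewise_constant_example}, but to replace the crude occupancy bound $\P_P\{|I(\sample_n)|\leq n-1\}\geq 1-\alpha_0$ with a sharper concentration statement that controls $|I(\sample_n)|$ below the level $n-r$ for the larger value of $r$ claimed here. Concretely, fix $\varepsilon>0$ and choose $f_\varepsilon\in\Fpwc^{(m)}$ with $R_P(f_\varepsilon)\leq R_P(\Fpwc^{(m)})+\varepsilon$, writing $f_\varepsilon(x)=\sum_{j=1}^m y_j\one_{x\in A_j}$. As before, on the event $\{|I(\sample_n)|\leq n-r\}$ we can collapse $f_\varepsilon$ on the sample to some $\hat f_\varepsilon\in\Fpwc^{(|I(\sample_n)|)}\subseteq\Fpwc^{(n-r)}$ with $\hat f_\varepsilon(X_i)=f_\varepsilon(X_i)$ for all $i$, so that $\hat R(f_\varepsilon,\sample_n)=\hat R(\hat f_\varepsilon,\sample_n)\geq\hat R(\Fpwc^{(n-r)},\sample_n)=\alpha_1^{-1}\hat L_\alpha^{\mathrm{pwc},r}(\Fpwc^{(m)},\sample_n)$. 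Combining with Markov's inequality applied to $\hat R(f_\varepsilon,\sample_n)$ (which holds with probability $\geq 1-\alpha_1$) and a union bound gives
\[\P_P\left\{\hat L_\alpha^{\mathrm{pwc},r}(\Fpwc^{(m)},\sample_n)\leq R_P(f_\varepsilon)\right\}\geq 1-\alpha_1-\P_P\{|I(\sample_n)|> n-r\},\]
and then letting $\varepsilon\searrow 0$ reduces everything to showing $\P_P\{|I(\sample_n)|\geq n-r+1\}\leq\alpha_0$ for the stated $r$.

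The heart of the argument is therefore a new occupancy-type lemma: if $Y_1,\dots,Y_n\iidsim P_Y$ with $|\mathrm{supp}(P_Y)|\leq m$ and $I$ is the number of distinct values observed, then $\P\{I\geq n-r+1\}\leq\alpha_0$ for the $r$ in the theorem. I would prove this by a one-sided Bernstein/Bennett argument on $C:=n-I$, the number of ``collisions'' (i.e., repeated draws). The natural route is to write $C=\sum_{i<i'}\one\{Y_i=Y_{i'}\}$ minus corrections, but the cleaner object is the exchangeable pair count $T=\#\{(i,i'):i<i',\,Y_i=Y_{i'}\}$, whose mean is $\binom{n}{2}\sum_j p_j^2\geq\binom{n}{2}/m = \tfrac{n(n-1)}{2m}$ when the $p_j$ are worst-case uniform; note $C\leq T$ always, and one can lower bound $\E[C]$ (equivalently, upper bound $\E[I]$) using the birthday-inequality bound $\E[I]\leq m(1-(1-1/m)^n)$ from \citet{munford1977birthday}-type reasoning, giving $\E[C]=n-\E[I]\geq n-m(1-(1-1/m)^n)$, and this quantity is $\gtrsim\tfrac{n(n-1)}{n+2m}$ after a convexity estimate of the form $1-(1-1/m)^n\leq \tfrac{n}{m}-\tfrac{n(n-1)}{2m^2}\cdot(\text{something})$, which explains the $\tfrac{n(n-1)}{n+2m}$ appearing in $r$. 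For the deviation term, $I=n-C$ is a function of the i.i.d.\ $Y_i$'s that changes by at most $1$ when one coordinate is changed, so by the bounded-differences (McDiarmid) inequality $\P\{I\geq\E[I]+t\}\leq e^{-2t^2/n}$; setting $e^{-2t^2/n}=\alpha_0$ gives $t=\sqrt{(n/2)\log(1/\alpha_0)}$. Translating $\E[I]\leq n-\mu$ with $\mu\gtrsim\tfrac{n(n-1)}{n+2m}$ and requiring $n-r+1> \E[I]+t$ yields $r\leq \mu-t-1 < \mu - \sqrt{(n/2)\log(1/\alpha_0)}$; after replacing $\sqrt{(n/2)\log(1/\alpha_0)}$ by $2\sqrt{\mu\log(1/\alpha_0)}$ (which dominates it since $\mu\leq n$, giving a valid, slightly looser bound) and taking the floor, one arrives at exactly the stated expression for $r$.

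I expect the main obstacle to be the quantitative bookkeeping in the occupancy lemma: pinning down the precise convexity inequality that converts the exact mean $\E[I]=m(1-(1-1/m)^n)$ into the clean surrogate $\E[C]\geq \tfrac{n(n-1)}{n+2m}$, and then checking that the deviation term $\sqrt{(n/2)\log(1/\alpha_0)}$ is indeed controlled by $2\sqrt{\tfrac{n(n-1)}{n+2m}\log(1/\alpha_0)}$ uniformly (this uses $\tfrac{n(n-1)}{n+2m}\geq \tfrac{n}{8}$ only when $2m\leq 7n$, so one may instead want to keep the sharper McDiarmid radius and argue directly, or simply verify the inequality $\sqrt{(n/2)\log(1/\alpha_0)}\le 2\sqrt{\mu\log(1/\alpha_0)}$ fails to be uniform and a small reorganization of $r$'s definition is needed). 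A secondary but routine concern is making sure the collapse-to-$\Fpwc^{(n-r)}$ construction is well-defined when $|I(\sample_n)|=0$ or when $n-r\leq 0$ (in the latter case $\hat R(\Fpwc^{(n-r)},\sample_n)$ should be read as $+\infty$ or the lower bound as vacuously $0$, so the statement still holds); I would dispatch this with a one-line remark. Everything else is a direct transcription of the proof of Theorem~\ref{thm:piecewise_constant_example}, with $n-1$ replaced by $n-r$ throughout and Lemma~\ref{lem:occupancy} replaced by the new lemma.
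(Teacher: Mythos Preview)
Your reduction to an occupancy statement is exactly right, and your lower bound $\E[n-I]\geq \frac{n(n-1)}{n+2m}$ is the same one the paper uses. The gap is in the concentration step. McDiarmid's bounded-differences inequality gives a deviation of order $\sqrt{n}$, not $\sqrt{\mu}$ with $\mu:=\frac{n(n-1)}{n+2m}$, and these are \emph{not} comparable in the regime of interest: when $m\gg n$ (say $m\asymp n^{1+\delta}$ or $m\asymp n^2$), we have $\mu\ll n$, so $\sqrt{(n/2)\log(1/\alpha_0)}\gg 2\sqrt{\mu\log(1/\alpha_0)}$. Your parenthetical already flags that the replacement step fails outside the narrow range $m\lesssim n$; but ``keeping the sharper McDiarmid radius'' would yield a strictly smaller $r'<r$, and since $\hat R(\Fpwc^{(n-r)},\sample_n)\geq \hat R(\Fpwc^{(n-r')},\sample_n)$, validity for $r'$ does \emph{not} imply validity for the stated $r$. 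So McDiarmid cannot deliver the theorem as written.

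The missing idea is that the random variable $r(Y):=n-I=\sum_{j=1}^m (C_j-1)_+$ is a \emph{self-bounding} function of the i.i.d.\ sample: writing $Y_{-i}$ for the sample with the $i$th coordinate removed, one checks $0\leq r(Y)-r(Y_{-i})\leq 1$ and $\sum_i\bigl(r(Y)-r(Y_{-i})\bigr)\leq 2\,r(Y)$, i.e.\ $r$ is $(2,0)$-strongly self-bounding in the sense of Boucheron--Lugosi--Massart. Their concentration theorem then gives a subgaussian \emph{left} tail with variance proxy $\E[r(Y)]$ rather than $n$:
\[
\P\bigl\{r(Y)\geq \E[r(Y)]-t\bigr\}\geq 1-\exp\!\bigl(-t^2/(4\,\E[r(Y)])\bigr),
\]
and choosing $t=2\sqrt{\E[r(Y)]\log(1/\alpha_0)}$ produces exactly the $2\sqrt{\mu\log(1/\alpha_0)}$ term in the stated $r$. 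Combined with the monotonicity of $t\mapsto (t-2\sqrt{t\log(1/\alpha_0)})_+$ and the bound $\E[r(Y)]\geq\mu$, this closes the argument. Everything else in your plan (the collapse of $f_\varepsilon$ to $\Fpwc^{(n-r)}$, Markov on $\hat R(f_\varepsilon,\sample_n)$, the union bound, and $\varepsilon\searrow 0$) is correct and identical to the paper's proof.
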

To interpret this result, observe that if $P_X$ and $P_Y$ are nonatomic,
then we would typically expect to have 
\[\hat{R}(\Fpwc^{(n-r)},\sample_n) \propto \frac{r}{n},\]
for the squared loss. This is because the data contains $n$ i.i.d.\ $Y$ values, but the model class $\Fpwc^{(n-r)}$ only allows for functions $f$ that return $\leq n-r$ distinct $Y$ values, and so informally, we expect that $r/n$ of the variance of $Y$ remains unexplained by any $f\in\Fpwc^{(n-r)}$. 
For $m\geq n$, as long as $m\leq cn^2$ for some appropriately chosen constant $c$, we have $r\propto n^2/m$---and therefore, we expect that the lower bound will scale as
\[\hat{L}_\alpha^{\mathrm{pwc},r}(\Fpwc^{(m)},\sample_n) \propto \frac{n}{m}.\]
In particular, if $m= \mathcal{O}(n)$, then $\hat{L}_\alpha^{\mathrm{pwc},r}(\Fpwc^{(m)},\sample_n)$ is likely bounded away from zero---a very informative lower bound.

We are now ready to prove this extension. \bigskip

\begin{proof}[Proof of Theorem~\ref{thm:piecewise_constant_example_extension}]
Following an identical argument as in the proof of Theorem~\ref{thm:piecewise_constant_example}, and defining $f_\varepsilon$ as in that proof, we have
\begin{multline*}\P_P\left\{\hat{L}_\alpha^{\mathrm{pwc},r}(\Fpwc^{(m)},\sample_n) \leq R_P(f_\varepsilon)\right\} \\\geq  \P_P\big\{\hat{R}(f_\varepsilon,\sample_n) \leq \alpha_1^{-1} R_P(f_\varepsilon)\textnormal{ and }|I(\sample_n)|\leq n-r\big\} \geq 1-\alpha_0-\alpha_1 = 1-\alpha,\end{multline*}
as long as we can show that
\[\P_P\{|I(\sample_n)|\leq n-r\}\geq 1-\alpha_0,\]
which is established in Lemma~\ref{lem:occupancy_extension} below.
Therefore,
\[\P_P\left\{\hat{L}_\alpha^{\mathrm{pwc},r}(\Fpwc^{(m)},\sample_n) \leq R_P(\Fpwc^{(m)}) + \varepsilon\right\} \geq 1-\alpha,\]
and since $\varepsilon>0$ can be taken to be arbitrarily small, this completes the proof. 
\end{proof}

\begin{lemma}\label{lem:occupancy_extension}
    Consider a discrete distribution $P_Y$ with support $\{y_1,\dots,y_m\}$, and let $Y_1,\dots,Y_n\iidsim P_Y$. Let
    \[I = \sum_{j=1}^m \one\left\{\sum_{i=1}^n\one_{Y_i=y_j}>0\right\}\]
    denote the number of unique values observed in the sample. Then for any $\alpha_0\in(0,1)$,
    \[\P_{P_Y}\left\{I \leq n-r\right\} \geq 1-\alpha_0,\]
    where
        \[r := \left\lceil \left(\frac{n(n-1)}{n+2m}  -  2\sqrt{\frac{n(n-1)}{n+2m} \log(1/\alpha_0)}\right)_+\right\rceil.\] 
\end{lemma}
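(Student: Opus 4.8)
The plan is to reduce the statement to a lower-tail concentration bound for the number of repeated values in the sample, and then prove that bound by combining a Chernoff estimate with a lower bound on the mean.

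\emph{Step 1 (reduction).} I would write $N_j=\sum_{i=1}^n\one\{Y_i=y_j\}$, so that $(N_1,\dots,N_m)\sim\mathrm{Multinomial}(n;p_1,\dots,p_m)$ with $p_j=\P_{P_Y}\{Y_1=y_j\}$ and $I=\sum_{j=1}^m\one\{N_j\geq1\}$, and work with the collision count $C:=n-I=\sum_{j=1}^m(N_j-1)_+$, since $\{I\leq n-r\}=\{C\geq r\}$. Writing $\ell:=\log(1/\alpha_0)$ and $\mu:=\tfrac{n(n-1)}{n+2m}$, it then suffices to show $\P_{P_Y}\{C\leq r-1\}\leq\alpha_0$. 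This is trivial if $r=0$, and if $r=1$ it is exactly $\P_{P_Y}\{\text{all }Y_i\text{ distinct}\}\leq\alpha_0$, which follows from Lemma~\ref{lem:occupancy} because $r\geq1$ forces $m\leq\tfrac{n(n-1)}{2\ell}$; so the work is in the case $r\geq2$, which in particular gives $\mu>4\ell$ and $r-1<\mu-2\sqrt{\mu\ell}=:t$.

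\emph{Step 2 (the mean).} Next I would lower bound $\E[C]$. A direct computation gives $\E[C]=\sum_{j=1}^m\bigl(np_j-1+(1-p_j)^n\bigr)=n-m+\sum_{j=1}^m(1-p_j)^n$; since $x\mapsto(1-x)^n$ is convex on $[0,1]$ and the expression is permutation-invariant, it is minimized over the probability simplex at the uniform vector, so $\E[C]\geq n-m+m(1-1/m)^n=n-\sum_{k=0}^{n-1}(1-1/m)^k$. Verifying the elementary inequality $\sum_{k=0}^{n-1}(1-1/m)^k\leq\tfrac{n(2m+1)}{n+2m}$ then yields $\E[C]\geq\mu$.

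\emph{Step 3 (concentration).} For the tail bound I would use that the multinomial count vector is negatively associated and that $N_j\mapsto e^{-\lambda(N_j-1)_+}$ is nonincreasing for $\lambda>0$, so the moment generating function factorizes: $\E[e^{-\lambda C}]\leq\prod_{j=1}^m\E[e^{-\lambda(N_j-1)_+}]$. Bounding each factor by $\exp\bigl(-\lambda\E[(N_j-1)_+]+\tfrac{\lambda^2}{2}\E[(N_j-1)_+^2]\bigr)$ (via $e^{-x}\leq1-x+x^2/2$) and optimizing the Chernoff bound over $\lambda$ gives $\P_{P_Y}\{C\leq t\}\leq\exp\bigl(-(\E[C]-t)^2/(2V)\bigr)$ with $V=\sum_{j=1}^m\E[(N_j-1)_+^2]$. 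Since $t=\mu-2\sqrt{\mu\ell}\leq\E[C]-2\sqrt{\mu\ell}$, the exponent is at least $2\mu\ell/V$, which is at least $\ell$ as soon as $V\leq2\mu$; I would show this holds whenever $P_Y$ is not too concentrated (using $\mathrm{Var}((N_j-1)_+)\leq\mathrm{Var}(N_j)=np_j(1-p_j)$ together with $\sum_jp_j^2$-type estimates), and handle heavily concentrated $P_Y$ separately — there $C$ is essentially deterministic (a point mass gives $C=n-1$ a.s.) and already far above $r\leq\mu<n-1$. Combining the regimes gives $\P_{P_Y}\{C\leq r-1\}\leq\P_{P_Y}\{C\leq t\}\leq\alpha_0$.

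\emph{The main obstacle.} The hard part will be Step 3: controlling the fluctuations of $C$ sharply enough that a downward deviation of order $\sqrt{\mu\ell}$ from the mean already has probability $\leq\alpha_0$, which needs a variance proxy comparable to $\mu$ itself. This is precisely where the $n$ in the denominator of $\mu=\tfrac{n(n-1)}{n+2m}$ is used — it supplies the slack that absorbs the second-moment and boundedness corrections — and it is why the bound fails with the naive $\tfrac{n(n-1)}{2m}$. Making the split between the ``spread'' and ``near-deterministic'' regimes clean, and verifying the elementary inequality in Step 2, are the two places where genuine care is needed; the reduction and the negative-association factorization are routine.
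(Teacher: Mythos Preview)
Your reduction and mean computation (Steps~1--2) are correct and coincide with the paper's: the paper likewise writes $I=n-r(Y)$ with $r(Y)=\sum_j(C_j-1)_+$ (your $C$), minimizes the mean over the simplex at the uniform distribution by convexity, and then invokes an elementary inequality (attributed to \citet{lee2021distribution}) for the final step $n-m+m(1-1/m)^n\geq\mu$.

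The gap is in Step~3. Your negative-association Chernoff route yields the variance proxy $V=\sum_j\E[(N_j-1)_+^2]$, and this is \emph{not} in general controlled by $\mu$ or even by $\E[C]$: a single moderately heavy atom $p_1=q$ with $nq$ large contributes order $(nq)^2$ to $V$ but only order $nq$ to $\E[C]$, so $V/\E[C]$ can be of order $n$. Your proposed dichotomy into ``spread'' versus ``near-deterministic'' $P_Y$ does not handle the mixed case of one or a few heavy atoms sitting alongside many light ones, and making such a split rigorous would be a separate, nontrivial argument --- the claimed inequality $V\leq 2\mu$ is simply false in general, and there is no clean threshold on $\max_j p_j$ that separates the regimes.

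The paper avoids all of this by recognizing that $C$ is a $(2,0)$-\emph{self-bounding} function in the sense of \citet[Section~6.11]{boucheron2013concentration}: with $Y_{-i}$ denoting the sample with observation $i$ deleted, one checks directly that $0\leq C(Y)-C(Y_{-i})\leq 1$ and $\sum_{i=1}^n\bigl(C(Y)-C(Y_{-i})\bigr)\leq 2C(Y)$. Their Theorem~6.20 then gives
\[
\P\bigl\{C\leq\E[C]-t\bigr\}\leq\exp\bigl(-t^2/(4\E[C])\bigr),
\]
so the variance proxy is automatically $2\E[C]$ --- no second moments, no case split. That is the idea your proposal is missing; once you have it, choosing $t=2\sqrt{\E[C]\log(1/\alpha_0)}$ and using $\E[C]\geq\mu$ (together with monotonicity of $s\mapsto(s-2\sqrt{s\log(1/\alpha_0)})_+$) finishes the proof in one line.
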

\begin{proof}[Proof of Lemma~\ref{lem:occupancy_extension}]
        First, for any $k\geq 1$ and any $Y=(Y_1,\dots,Y_k)\in\{y_1,\dots,y_m\}^k$, define 
    \[r(Y) = \sum_{j=1}^m (C_j(Y)-1)_+ ,\]
    where $C_j(Y) = \sum_{i=1}^n\one_{Y_i=y_j}$ counts the number of times that the value $y_j$ was observed in the vector $Y$.
    Fixing any $k\geq 2$, and defining $Y_{-i} = (Y_1,\dots,Y_{i-1},Y_{i+1},\dots,Y_k)\in \{y_1,\dots,y_m\}^{k-1}$, note that $C_j(Y)=C_j(Y_{-i})$ for all $i$ with $Y_i\neq y_j$. Therefore,
    for any $i,j$ with $Y_i=y_j$,
    \[r(Y) - r(Y_{-i}) = (C_j(Y)-1)_+ - (C_j(Y_{-i})-1)_+ = \one_{C_j(Y)\geq 2}.\]
    In particular, this implies 
    \[0 \leq r(Y) - r(Y_{-i}) \leq 1\]
    for all $i\in[k]$,
    and also,
    \[\sum_{i=1}^k \big(r(Y) - r(Y_{-i})\big) 
    = \sum_{j=1}^m C_j(Y) \cdot \one_{C_j(Y)\geq 2} \leq  \sum_{j=1}^m 2(C_j(Y)-1)_+ = 2r(Y).\]
    This means that $r$ is a $(2,0)$-strongly-self-bounding function, in the terminology of \citet[Section 6.11]{boucheron2013concentration}. Applying \citet[Theorem 6.20]{boucheron2013concentration}, then, for $Y=(Y_1,\dots,Y_n)$ where $Y_1,\dots,Y_n\iidsim P_Y$,
    \[\P_{P_Y}\{r(Y) \geq \E_{P_Y}[r(Y)] - t\} \geq 1- e^{-t^2/4\E_{P_Y}[r(Y)]} \]
    for all $t>0$, i.e., $r(Y)$ has a subgaussian left tail. Choosing $t = 2\sqrt{\E_{P_Y}[r(Y)]\log(1/\alpha_0)}$,
    \begin{equation}\label{eqn:self_bounding_alpha0}\P_{P_Y}\left\{r(Y) \geq \E_{P_Y}[r(Y)] - 2\sqrt{\E_{P_Y}[r(Y)]\log(1/\alpha_0)}\right\} \geq 1- \alpha_0. \end{equation} 
    
    Next we calculate $\E_{P_Y}[r(Y)]$.  First, $C_j\sim\mathrm{Binom}(n,p_j)$, where $p_j = \P_{P_Y}\{Y_1=y_j\}$. Thus
    \[\E_{P_Y}[(C_j-1)_+] = \E_{P_Y}[C_j - 1 + \one_{C_j=0}]
    = np_j - 1 + (1-p_j)^n,\]
    and so
    \[\E_{P_Y}[r(Y)] = \sum_{j=1}^m \big(np_j - 1 + (1-p_j)^n\big) = n - m + \sum_{j=1}^m (1-p_j)^n.\]
    Now let $p:=(p_1,\dots,p_m)$, which lies in the probability simplex. Since $p\mapsto \sum_{j=1}^m (1-p_j)^n$ is convex, this means that $\sum_{j=1}^m (1-p_j)^n$ is minimized at $p=(\frac{1}{m},\dots,\frac{1}{m})$, i.e.,
    \[\E_{P_Y}[r(Y)] \geq n - m + m\left(1-\frac{1}{m}\right)^n \geq \frac{n(n-1)}{n+2m},\]
    where the last step holds by \citet[Lemma 4]{lee2021distribution}. 

    Combining this lower bound on $\E_{P_Y}[r(Y)]$ with the
    calculation~\eqref{eqn:self_bounding_alpha0}, we therefore have
    \[\P_{P_Y}\left\{r(Y) \geq \left(\frac{n(n-1)}{n+2m} - 2\sqrt{\frac{n(n-1)}{n+2m}\cdot \log(1/\alpha_0)} \right)_{+} \right\} \geq 1- \alpha_0,\]
    since $t\mapsto (t - 2\sqrt{t\log(1/\alpha_0)})_+$ is a nondecreasing function on $t\geq 0$, and $r(Y)$ is nonnegative. By definition of $r$, and using the fact that $r(Y)$ is integer-valued, we therefore have $\P_{P_Y}\{r(Y)\geq r\}\geq 1-\alpha_0$.

    As the last step, we need to relate the target quantity $I$ to the newly defined $r(Y)$. Note that $\sum_{j=1}^m C_j = n$ by construction. We therefore have
    \[I = \sum_{j=1}^m \one_{C_j>0} = \sum_{j=1}^m \left(C_j - (C_j - 1)_+\right) = n - r(Y),\]
    which completes the proof.
\end{proof}

Theorem~\ref{thm:piecewise_constant_example_extension} offers a more informative lower bound than our original construction, in Theorem~\ref{thm:piecewise_constant_example}, both of which are valid without placing a bound on the loss. However, by combining the proof ideas from Theorem~\ref{thm:low_complexity_trunc_loss} (which uses a truncated loss) and Theorem~\ref{thm:piecewise_constant_example_extension}, we can obtain the following potentially even more informative lower bound.\bigskip

\begin{theorem}\label{thm:piecewise_constant_example_extension_trunc_loss}
    Fix $\alpha \in (0,1)$, $n \geq 1$, and $m \geq 1$. Let $\alpha_0 + \alpha_1 = \alpha$, with $\alpha_0, \alpha_1 > 0$. Fix any $B>0$, and define $\Delta_{n,B} \in (0,1]$ as the unique solution of 
    \[
    -\Delta_{n,B} - \log(1-\Delta_{n,B})=\frac{B\log(1/\alpha_1)}{n\hat{R}(\Fpwc^{(n-r)},\sample_n;B)},
    \]
    where $r$ is defined as in Theorem~\ref{thm:piecewise_constant_example_extension}, and where the truncated empirical risk $\hat{R}(\cdot,\sample_n;B)$ is defined as in Theorem~\ref{thm:low_complexity_trunc_loss}. Then
    \[
    \hat{L}^{\mathrm{pwc-trunc},r,B}_\alpha(\Fpwc^{(m)},\sample_n) := (1-\Delta_{n,B})\hat{R}(\Fpwc^{(n-r)},\sample_n;B)
    \]
    is a valid distribution-free lower bound on $R_P(\Fpwc^{(m)})$.
\end{theorem}

\begin{proof}[Proof of Theorem~\ref{thm:piecewise_constant_example_extension_trunc_loss}]
    By the same argument and using the same notation as in the proofs of Theorems~\ref{thm:piecewise_constant_example} and~\ref{thm:piecewise_constant_example_extension}, we have 
    $\P_P\{|I(\sample_n)|\leq n-r\}\geq 1-\alpha_0$.
    Let $\Delta'_{n,B}$ be the unique solution to
    \[
    -\Delta'_{n,B} - \log(1-\Delta'_{n,B})=\frac{B\log(1/\alpha_1)}{n\hat{R}(f_\varepsilon,\sample_n;B)}.
    \]
    Therefore, on the event $\{\hat{f}_\varepsilon \in \Fpwc^{(n-r)}\} \supseteq \{|I(\sample_n)|\leq n-r\}$, we have $\hat{R}(f_\varepsilon,\sample_n;B) = \hat{R}(\hat{f}_\varepsilon, \sample_n;B) \geq \hat{R}(\Fpwc^{(n-r)}, \sample_n;B)$ as in the proof of Theorem~\ref{thm:piecewise_constant_example_extension}, and therefore
    \[
      -\Delta_{n,B} - \log(1-\Delta_{n,B})=\frac{B\log(1/\alpha_1)}{n\hat{R}(\Fpwc^{(n-r)},\sample_n;B)} \geq \frac{B\log(1/\alpha_1)}{n\hat{R}(f_\varepsilon,\sample_n;B)} = -\Delta'_{n,B} - \log(1-\Delta'_{n,B}).
    \]
    Thus, on the event that $|I(\sample_n)|\leq n-r$, we have $\Delta'_{n,B} \leq \Delta_{n,B}$ and so
    \[
    \hat{L}^{\mathrm{pwc-trunc},r,B}_\alpha(\Fpwc^{(m)},\sample_n) \leq (1-\Delta_{n,B})\hat{R}(f_\varepsilon,\sample_n;B) \leq (1-\Delta'_{n,B})\hat{R}(f_\varepsilon,\sample_n;B).
    \]
    By Lemma~\ref{lemma:empirical_multiplicative_chernoff}, $\P_P\big\{(1-\Delta_{n,B}')\hat{R}(f_\varepsilon,\sample_n;B) \leq  \E[\hat{R}(f_\varepsilon,\sample_n;B)]\big\}\geq 1-\alpha_1$, and so
    \begin{multline*}
    \P_P\left\{\hat{L}_\alpha^{\mathrm{pwc-trunc},r,B}(\Fpwc^{(m)},\sample_n) \leq \E[\hat{R}(f_\varepsilon,\sample_n;B)]\right\} \\
    \geq  \P_P\big\{(1-\Delta_{n,B}')\hat{R}(f_\varepsilon,\sample_n;B) \leq  \E[\hat{R}(f_\varepsilon,\sample_n;B)]\textnormal{ and }|I(\sample_n)|\leq n-r\big\} \geq 1-\alpha.
    \end{multline*}
    As in the proof of Theorem~\ref{thm:low_complexity_trunc_loss}, we have $\E[\hat{R}(f_\varepsilon,\sample_n;B)]\leq R_P(\Fpwc^{(m)})+\varepsilon$, so it therefore holds that $\P_P\left\{\hat{L}_\alpha^{\mathrm{pwc-trunc},r,B}(\Fpwc^{(m)},\sample_n) \leq R_P(\Fpwc^{(m)}) + \varepsilon\right\}\geq 1-\alpha$, and since $\varepsilon>0$ can be taken to be arbitrarily small, this completes the proof.
\end{proof}

\section{Proofs and extensions for results in Section~\ref{sec:linear}}

\subsection{Proofs for results on the linear model example}
In this section, we prove all results stated in Section~\ref{sec:linear} for the linear model example.
\begin{proof}[Proof of Theorem~\ref{thm:linear_example}]
First, for any $Q\in\Qcal_{\textnormal{lin}}$, we have $R_Q(\Flin^{(d)})=0$ by definition. Therefore, by distribution-free validity of $\hat{L}_\alpha(\Flin^{(d)},\cdot)$, we must have
\[\P_{\sample_n\sim Q^n}\left\{\hat{L}_\alpha(\Flin^{(d)},\sample_n)=0\right\}\geq 1-\alpha.\]
Since this is true for every $Q^n\in \Qcal^{(n)}_{\textnormal{lin}}$, it must therefore also hold for any distribution in the convex hull of this class of distributions, i.e.,
\[\P_{\sample_n\sim Q}\left\{\hat{L}_\alpha(\Flin^{(d)},\sample_n)=0\right\}\geq 1-\alpha\textnormal{ for all $Q\in\tilde{\Qcal}^{(n)}_{\textnormal{lin}}$}.\]
By definition of total variation distance, therefore,
\[\P_{\sample_n\sim P^n}\left\{\hat{L}_\alpha(\Flin^{(d)},\sample_n) =0\right\}\geq 1-\alpha - \dtv(P^n,Q)\textnormal{ for all $Q\in\tilde{\Qcal}^{(n)}_{\textnormal{lin}}$},\]
which completes the proof.
\end{proof}

\begin{proof}[Proof of Proposition~\ref{prop:general-bound-lambda}]
Since $\tilde{\Qcal}_{\mathrm{lin}}^{(n)}$ is defined by taking mixtures of noiseless linear models, the quantity $\lambda_{n,d}(P)$ is invariant to taking linear transformations of the features $X$---that is, for any invertible $A\in\R^{d\times d}$, if $P'$ is the distribution of $(A X,Y)$ when we draw $(X,Y)\sim P$, then $\lambda_{n,d}(P') = \lambda_{n,d}(P)$. Therefore, without loss of generality, from this point on we can assume $\Omega = \bI_d$, and we will write $h(\bx) = h_{\bI_d}(\bx) = (\det(\bx\bx^\top))^{-1/2}$.

First, we define two continuous distributions on $(\bX,\beta)\in\R^{n\times d}\times\R^d$. 
Let $f(x,y)$ denote the density of the distribution $P$ on $\R^d\times\R$. Now, fix any constant $c>0$, and define $\tilde{P}$ as the distribution with density 
\[g_{\tilde{P}}(\bx,b) = \prod_{i=1}^n f(x_i,x_i^\top b) \cdot (2\pi c)^{-\frac{d-n}{2}} e^{-\| b\|^2_2/(2c)} \cdot \left[ \sqrt{\det(\bx\bx^\top)}\cdot e^{\|\mathbf{P}_{\bx} b\|^2_2/(2c)} \right],\]
where $\mathbf{P}_{\bx} = \bx^\top(\bx\bx^\top)^{-1}\bx\in\R^{d\times d}$ is the projection matrix to the row span of $\bx$. (Note that we are implicitly assuming $\bx\bx^\top\in\R^{n\times n}$ is invertible---since we are defining a density, it is sufficient that this holds for almost every $\bx\in\R^{n\times d}$.) 
Next, define $\tilde{Q}$ as the distribution with density
\[g_{\tilde{Q}}(\bx,b) = \prod_{i=1}^n f(x_i,x_i^\top b) \cdot (2\pi c)^{-\frac{d-n}{2}} e^{-\| b\|^2_2/(2c)} \cdot C^{-1},\]
where
\begin{equation}\label{eqn:C_def_and_bound}
    C = \E_{\tilde{P}}\left[ (\det(\bX\bX^\top))^{-1/2}\exp\left\{-\frac{1}{2c}\|\mathbf{P}_{\bX}\beta\|^2_2\right\}\right] \leq \E_{\tilde{P}}\left[h(\bX)\right],
\end{equation}
where the inequality holds by the definition of $h(\cdot)$.  (We will verify below that these functions are indeed well-defined densities, i.e., that $C$ is finite and positive, and that $g_{\tilde{P}}$ and $g_{\tilde{Q}}$ each integrate to $1$.)
Next, let $\tilde{P}_{(\bX,\bY)}$ denote the distribution of $(\bX,\bY):=(\bX,\bX\beta)\in\R^{n\times d}\times\R^n$ induced by drawing $(\bX,\beta)\sim\tilde{P}$, and define $\tilde{Q}_{(\bX,\bY)}$ analogously.

The intuition of the remainder of the proof is the following. First, in Step 1, we will verify
that $\tilde{P}$ and $\tilde{Q}$ are close in total variation distance---that is,
we will bound $\dtv(\tilde{P},\tilde{Q})$, 
which therefore induces a bound on $\dtv(\tilde{P}_{(\bX,\bY)},\tilde{Q}_{(\bX,\bY)})$, by construction
of $\tilde{P}_{(\bX,\bY)},\tilde{Q}_{(\bX,\bY)}$ from $\tilde{P},\tilde{Q}$.
Next, in Step 2, we will verify that
\begin{equation}\label{eqn:XY_marginal}
\textnormal{$\tilde{P}_{(\bX,\bY)}=P^n$, and $\tilde{Q}_{(\bX,\bY)}\in\tilde{\Qcal}_{\mathrm{lin}}^{(n)}$},
\end{equation}
and consequently, this will mean that $\lambda_{n,d}(P) \leq \dtv(\tilde{P}_{(\bX,\bY)},\tilde{Q}_{(\bX,\bY)})$. Finally, in Step 3, we will take a limit as $c\to\infty$ to complete the proof.

\paragraph{Step 1: bounding the total variation distance.}
We calculate
\begin{align*}
    \dtv(\tilde{P},\tilde{Q})
    &=\E_{\tilde{P}}\left[\left(1 - \frac{g_{\tilde{Q}}(\bX,\beta)}{g_{\tilde{P}}(\bX,\beta)}\right)_+\right]\\
    &=\E_{\tilde{P}}\left[\left(1 - \frac{C^{-1}}{\sqrt{\det(\bX\bX^\top)}\cdot e^{\|\mathbf{P}_{\bX}\beta\|^2_2/(2c)}}\right)_+\right]\\
    &\leq\E_{\tilde{P}}\left[\left(1 - \frac{h(\bX)}{\E_{\tilde{P}}[h(\bX)]} \cdot e^{-\|\mathbf{P}_{\bX}\beta\|^2_2/(2c)}\right)_+\right]\textnormal{\quad by definition of $h(\cdot)$ and by~\eqref{eqn:C_def_and_bound}}\\
    &\leq \E_{\tilde{P}}\left[e^{-\|\mathbf{P}_{\bX}\beta\|^2_2/(2c)} \cdot \left(1 - \frac{h(\bX)}{\E_{\tilde{P}}[h(\bX)]}\right)_+\right] + \E_{\tilde{P}}\left[1 - e^{-\|\mathbf{P}_{\bX}\beta\|^2_2/(2c)}\right]\\
    &\leq \E_{\tilde{P}}\left[\left(1 - \frac{h(\bX)}{\E_{\tilde{P}}[h(\bX)]}\right)_+\right] + \E_{\tilde{P}}\left[1 - e^{-\|\mathbf{P}_{\bX}\beta\|^2_2/(2c)}\right]\\
    &= \frac{1}{2}\E_{\tilde{P}}\left[\left|\frac{h(\bX)}{\E_{\tilde{P}}[h(\bX)]} - 1\right|\right] + \E_{\tilde{P}}\left[1 - e^{-\|\mathbf{P}_{\bX}\beta\|^2_2/(2c)}\right],
\end{align*}
where the last step holds since for any random variable $A$ with $\E[A]=1$, it holds by symmetry that $\E[(1-A)_+]=\E[(A-1)_+] = \frac{1}{2}\E[|A-1|]$.

Next, we have that $\tilde{P}_{(\bX,\bY)}$-almost surely
\[\mathbf{P}_{\bX}\beta = \bX^\top(\bX\bX^\top)^{-1}\bX\beta = \bX^\top(\bX\bX^\top)^{-1}\bY, \]
by definition of $\bY=\bX\beta$. From \eqref{eqn:XY_marginal} (which we will verify below), we know that $P^n=\tilde{P}_{(\bX,\bY)}$, and therefore,
\begin{equation}\label{eqn:step1_lambda_proof}
\dtv(\tilde{P},\tilde{Q})\leq\frac{1}{2}\E_P\left[\left|\frac{h(\bX)}{\E_P[h(\bX)]} - 1\right|\right] + \E_P\left[1 - e^{-\|\bX^\top(\bX\bX^\top)^{-1}\bY\|^2_2/(2c)}\right].
\end{equation}

\paragraph{Step 2: proving~\eqref{eqn:XY_marginal}.}
We now need to verify~\eqref{eqn:XY_marginal} (and, along the way, to check that $g_{\tilde{P}}$ and $g_{\tilde{Q}}$ are well-defined densities).
We begin by considering a joint distribution $\tilde{P}_*$ on $(\mathbf{X},\mathbf{Y},\beta)\in \R^{n\times d} \times \R^n\times \R^d$ generated as follows:
first sample $(\bX,\bY)\sim P^n$, then sample
    \[\beta \mid (\bX,\bY)\sim\mathcal{N}\left(\bX^\top(\bX\bX^\top)^{-1}\bY,c\mathbf{P}_{\bX}^\perp\right),\]  where $\mathbf{P}_{\bX}^\perp = \bI_d - \bX^\top (\bX\bX^\top)^{-1}\bX = \bI_d - \mathbf{P}_{\bX}$ denotes the projection matrix onto the orthogonal complement of the row span of $\bX$. 
    (Since the distribution $P$ has a density, note that $\bX$ has rank $n$, almost surely, and so $\bX\bX^\top$ is invertible, almost surely.)
Now consider the joint distribution of $(\bX,\beta)$ under $\tilde{P}_*$ (i.e., we are marginalizing out $\bY$), by first calculating the conditional density of $\beta\mid\bX$. Let $U\in\R^{d\times (d-n)}$ be an orthonormal matrix satisfying $UU^\top = \mathbf{P}_{\bX}^\perp$. Define
\[\gamma =  \left(\begin{array}{c} \bX \\ U^\top \end{array}\right)\cdot \beta.\]
Then by construction,
\begin{align*}
\gamma\mid (\bX,\bY) &\sim \mathcal{N}\left(\left(\begin{array}{c} \bX \\ U^\top \end{array}\right)\cdot \bX^\top(\bX\bX^\top)^{-1}\bY,\left(\begin{array}{c} \bX \\ U^\top \end{array}\right)\cdot c\mathbf{P}_{\bX}^\perp\cdot \left(\begin{array}{c} \bX \\ U^\top \end{array}\right)^\top\right)  \\
&=\mathcal{N}\Biggl(\left(\begin{array}{c} \bY \\ \mathbf{0}\end{array}\right), \left(\begin{array}{cc} \mathbf{0} & \mathbf{0}\\ \mathbf{0} & c\bI_{d-n}\end{array}\right)\Biggr).
\end{align*}
Then we can calculate that the distribution of $\gamma$ conditional on $\bX$ has the following density on $\R^d$ (note that we are now marginalizing over $\bY$):
\[g_{\gamma\mid \bX}(z\mid \mathbf{X}) = \prod_{i=1}^n f_{Y\mid X}(z_i\mid X_i) \cdot (2\pi c)^{-\frac{d-n}{2}}e^{-\|(z_{n+1},\dots,z_d)\|^2_2/(2c)}.\]
(Here $f_{Y|X}$ denotes the conditional density of $Y$ given $X$ under the joint distribution $(X,Y)\sim P$.) Moreover, when we condition on $\bX$, since $\beta$ is simply a linear transformation of $\gamma$, we therefore have
\[g_{\beta|\bX}(b\mid \bX) = \left|\det\Biggl(\biggl(\begin{array}{c} \bX \\ U^\top \end{array}\biggr)\Biggr)\right| \cdot g_{\gamma|\bX}\Biggl(\left(\begin{array}{c} \bX \\ U^\top \end{array}\right) \cdot b\Biggr).\]
We calculate
\begin{multline*}\left|\det\Biggl(\left(\begin{array}{c} \bX \\ U^\top \end{array}\right)\Biggr)\right| = \sqrt{\det\left(\left(\begin{array}{c} \bX \\ U^\top \end{array}\right)\left(\begin{array}{c} \bX \\ U^\top \end{array}\right)^\top\right)}\\
= \sqrt{\det\Biggl(\left(\begin{array}{cc}\bX\bX^\top & \bX U\\U^\top\bX^\top & U^\top U\end{array}\right)\Biggr)}
= \sqrt{\det\Biggl(\left(\begin{array}{cc}\bX\bX^\top & \mathbf{0}\\\mathbf{0} & \mathbf{I}_{d-n}\end{array}\right)\Biggr)}
= \sqrt{\det(\bX\bX^\top)}.\end{multline*}
And,
\begin{align*}g_{\gamma|\bX}\Biggl(\left(\begin{array}{c} \bX \\ U^\top \end{array}\right) \cdot b\Biggr) &= \prod_{i=1}^n f_{Y|X}(X_i^\top b\mid X_i) \cdot (2\pi c)^{-\frac{d-n}{2}} e^{-\|U^\top b\|^2_2/(2c)}\\
&=\prod_{i=1}^n f_{Y|X}(X_i^\top b\mid X_i) \cdot (2\pi c)^{-\frac{d-n}{2}} e^{-\|\mathbf{P}_{\bX}^\perp b\|^2_2/(2c)},\end{align*}
since $UU^\top = \mathbf{P}_{\bX}^\perp$.
Therefore, 
\begin{align*}g_{\beta|\bX}(b\mid \bX)
&= \sqrt{\det(\bX\bX^\top)}\cdot \prod_{i=1}^n f_{Y|X}(X_i^\top b\mid X_i) \cdot (2\pi c)^{-\frac{d-n}{2}} e^{-\|\mathbf{P}_{\bX}^\perp b\|^2_2/(2c)}\\
&=\prod_{i=1}^n f_{Y|X}(X_i^\top b\mid X_i) \cdot (2\pi c)^{-\frac{d-n}{2}} e^{-\|b\|^2_2/(2c)} \cdot\left[\sqrt{\det(\bX\bX^\top)}\cdot  e^{\|\mathbf{P}_{\bX} b\|^2_2/(2c)}\right].
\end{align*}
Writing $f_X$ as the marginal density of $X$ under the joint distribution $(X,Y)\sim P$, we then see that the density of $(\bX,\beta)$ under $\tilde{P}_*$ (i.e., after marginalizing out $\bY$) is given by
\begin{multline*}\prod_{i=1}^n f_X(x_i) \cdot g_{\beta|\bX}(b\mid \bx)
\\= \prod_{i=1}^n f_X(x_i) \cdot \prod_{i=1}^n f_{Y|X}(x_i^\top b\mid x_i) \cdot (2\pi c)^{-\frac{d-n}{2}} e^{-\|b\|^2_2/(2c)} \cdot\left[\sqrt{\det(\bx\bx^\top)}\cdot  e^{\|\mathbf{P}_{\bx} b\|^2_2/(2c)}\right]\\ = g_{\tilde{P}}(\bx,b).\end{multline*}
In particular, this verifies that $g_{\tilde{P}}$ is a well-defined density, and also verifies that $\tilde{P}$ is the marginal distribution of $(\bX,\beta)$, when $(\bX,\bY,\beta)\sim\tilde{P}_*$. Moreover, under the distribution $\tilde{P}_*$, we have $(\bX,\bY)\sim P^n$ by construction, and we also have $\bY = \bX\beta$ almost surely and therefore $(\bX,\bX\beta)\sim P^n$ also holds. This proves the first part of~\eqref{eqn:XY_marginal}.

Next consider $\tilde{Q}$. First, we verify that $C$ is finite, since 
\[C \leq \E_{\tilde{P}}[h(\bX)] = \E_P[h(\bX)]<\infty,\]
where the first step holds by~\eqref{eqn:C_def_and_bound}, the second holds by the first part of~\eqref{eqn:XY_marginal}, and the third holds by assumption in the proposition. Moreover,
$C>0$ since it is the expected value of a random variable that is positive almost surely.
Next, note that
\[g_{\tilde{Q}}(\bx,b) = g_{\tilde{P}}(\bx,b) \cdot C^{-1} \cdot \frac{1}{\sqrt{\det(\bx\bx^\top)}\cdot e^{\|\mathbf{P}_{\bx} b\|^2_2/(2c)}},\]
by definition of $g_{\tilde{P}},g_{\tilde{Q}}$. 
Plugging in the definition of $C$, we have
\[g_{\tilde{Q}}(\bx,b) = g_{\tilde{P}}(\bx,b) \cdot \frac{ (\det(\bx\bx^\top))^{-1/2}\exp\left\{-\frac{1}{2c}\|\mathbf{P}_{\bx}b\|^2_2\right\}}{\E_{\tilde{P}}\left[ (\det(\bX\bX^\top))^{-1/2}\exp\left\{-\frac{1}{2c}\|\mathbf{P}_{\bX}b\|^2_2\right\}\right]},\]
which means that $g_{\tilde{Q}}$ must also integrate to $1$ (i.e., it is a well-defined density).
Next we need to verify that $\tilde{Q}_{(\bX,\bY)}\in\tilde{\Qcal}_{\mathrm{lin}}^{(n)}$. Define $\tilde{Q}_\beta$ as the distribution of $(\bX,\bY)$ conditional on $\beta$, under the joint distribution $\tilde{Q}$ on $(\bX,\beta)$, when we define $\bY=\bX\beta$. Clearly, $\tilde{Q}_{(\bX,\bY)}$ can be expressed as a mixture of such distributions. Moreover, the data points $(X_i,Y_i)$ are i.i.d.\ conditional on $\beta$ (since by examining the density $g_{\tilde{Q}}$ we can see that it factors over data points $i$), and satisfy $Y_i = X_i^\top\beta$ almost surely. Therefore, $\tilde{Q}_\beta\in\Qcal_{\mathrm{lin}}^{(n)}$, which completes the proof of~\eqref{eqn:XY_marginal}.

\paragraph{Step 3: combining everything.}
Combining our calculations so far, we have shown that
\begin{multline*}\lambda_{n,d}(P) \leq \dtv(\tilde{P}_{(\bX,\bY)},\tilde{Q}_{(\bX,\bY)}) \leq \dtv(\tilde{P},\tilde{Q})\\ \leq \frac{1}{2}\E_P\left[\left|\frac{h(\bX)}{\E_P[h(\bX)]} - 1\right|\right] + \E_P\left[1 - e^{-\|\bX^\top(\bX\bX^\top)^{-1}\bY\|^2_2/(2c)}\right],\end{multline*}
where the first step holds by~\eqref{eqn:XY_marginal}, the second step holds by construction of $\tilde{P}_{(\bX,\bY)},\tilde{Q}_{(\bX,\bY)}$ from $\tilde{P},\tilde{Q}$, and the third step holds by~\eqref{eqn:step1_lambda_proof}.
 Moreover, since this is true for any $c>0$, and since for any random variable $A\geq 0$ it holds that $\lim_{c\to\infty}\E[e^{-A/c}]=1$ by the dominated convergence theorem,
we therefore have
\[\lambda_{n,d}(P)\leq  \frac{1}{2}\E_P\left[\left|\frac{h(\bX)}{\E_P[h(\bX)]} - 1\right|\right] ,\]
as desired.
\end{proof}

\begin{proof}[Proof of Corollary~\ref{cor:Gaussian-error}]
By Proposition~\ref{prop:general-bound-lambda} (applied with $\Omega=\Sigma^{-1}$), we have 
\begin{align*}
    \lambda_{n,d}(P)
    &\leq \frac{1}{2}\E_P\left[\left|\frac{h_{\Omega}(\bX)}{\E_P[h_{\Omega}(\bX)]} - 1\right|\right]\\
    &\leq \frac{1}{\sqrt{2}}\left\{ \E_P\left[\log\left(\frac{\E_P[h_{\Omega}(\bX)]}{h_{\Omega}(\bX)}\right)\right]\right\}^{1/2}\\
    &=\frac{1}{\sqrt{2}}\left\{ \log\left(\E_P\left[(\det(\bX\Sigma^{-1}\bX^\top))^{-1/2}\right]\right) - \E_P\left[\log\left( (\det(\bX\Sigma^{-1}\bX^\top))^{-1/2}\right)\right]\right\}^{1/2},
\end{align*}
where the second step holds by Pinsker's inequality (to be more concrete, we are using the fact that, for a random variable $Z\geq 0$ with $\E[Z]=1$, $\frac{1}{2}\E[|Z-1|] \leq \sqrt{\frac{1}{2}\E[\log(1/Z)]}$).
Next we compute each of these expected values. First, observe that $\bX\Sigma^{-1}\bX^\top\sim W_n(\bI_n,d)$ (a Wishart distribution).
\cite{goodman1963distribution} proves that the determinant of a random matrix drawn from the Wishart distribution $W_n(\bI_n, d)$ is distributed as the product of independent random variables with a $\chi^2$-distribution, i.e.,
\[\det(\bX\Sigma^{-1}\bX^\top) \stackrel{\mathrm{d}}{=} G_d \cdot G_{d-1}\cdot \hdots \cdot G_{d-n+1},\]
where $G_k\sim \chi^2_k$ for each $k$, and the $G_k$'s are mutually independent.
It is straightforward to check that, since $G_k\sim \chi^2_k$, 
	\[
			\E[G_k^{-1/2}] = \int_0^\infty \frac{1}{\sqrt{x}} \frac{1}{2^{k/2} \Gamma(k/2) } x^{k/2-1} e^{-x/2}\;\mathsf{d} x =\frac{1}{\sqrt{2}} \frac{\Gamma( \frac{k-1}{2} )}{\Gamma( \frac{k}{2} )},
		\]
    and
        \begin{multline*}
        \E\bigl[\log G_k\bigr] - \log 2= \E\bigl[\log (G_k/2)\bigr] = \int_0^\infty \frac{\log(x/2)}{2^{k/2}\Gamma(k/2)}x^{k/2-1}e^{-x/2}\;\mathsf{d}x \\
        = \int_0^\infty \frac{\log(t)}{\Gamma(k/2)}t^{k/2-1}e^{-t}\;\mathsf{d}t = \psi\Bigl(\frac{k}{2}\Bigr), 
    \end{multline*}
where the final equality follows from the integral representation of the digamma function $\psi(u) := \frac{\mathsf{d}}{\mathsf{d}u}\log \Gamma(u) = \int_0^\infty \log(x)x^{u-1}e^{-x}\; \mathsf{d}x/\Gamma(u)$, $u > 0$ \citep{gordon1994gamma}.
Therefore,
\begin{align*}
    &\log \E_P[(\det(\bX\Sigma^{-1}\bX^\top))^{-1/2}] -\E_P\left[\log(\det(\bX\Sigma^{-1}\bX^\top)^{-1/2})\right]\\
    &=\log \E[(G_d\cdot\hdots\cdot G_{d-n+1})^{-1/2}] +\frac{1}{2} \E\left[\log(G_d\cdot\hdots\cdot G_{d-n+1})\right]\\
    &=\log \prod_{k=d-n+1}^d\E[G_k^{-1/2}] + \frac{1}{2}\sum_{k=d-n+1}^d\E[\log G_k]\\
    &= \sum_{k=d-n+1}^d \left\{  \log\left(\frac{1}{\sqrt{2}} \frac{\Gamma( \frac{k-1}{2} )}{\Gamma( \frac{k}{2} )}\right)  + \frac{1}{2}\psi\Bigl(\frac{k}{2}\Bigr) + \frac{\log 2}{2}\right\} \\
    &= \sum_{j=1}^n \left\{  \log\left(\frac{\Gamma( \frac{d-1}{2} + \frac{1-j}{2} )}{\Gamma( \frac{d}{2} + \frac{1-j}{2} )}\right)  + \frac{1}{2}\psi\Bigl(\frac{d}{2} + \frac{1-j}{2}\Bigr)\right\}.
\end{align*}
Next, we bound each term in the sum. Since $\psi(u) = \frac{\mathsf{d}}{\mathsf{d}u}\log \Gamma(u)$ and $d\geq n+2$, we have
\begin{align*}
&\log\left(\Gamma\left( \frac{d-1}{2} + \frac{1-j}{2} \right)\right) - \log\left(\Gamma\left( \frac{d}{2} + \frac{1-j}{2} \right)\right)+ \frac{1}{2}\psi\Bigl(\frac{d}{2} + \frac{1-j}{2}\Bigr)\\
&=\frac{1}{2}\left[\psi\Bigl(\frac{d}{2} + \frac{1-j}{2}\Bigr)-\psi\Bigl(\frac{d-c}{2} + \frac{1-j}{2}\Bigr)\right]\textnormal{\quad for some $c\in[0,1]$, by Taylor's theorem}\\
&\leq \frac{1}{2}\left[\psi\Bigl(\frac{d}{2} + \frac{1-j}{2}\Bigr)-\psi\Bigl(\frac{d-1}{2} + \frac{1-j}{2}\Bigr)\right]\textnormal{\quad since $\psi(u)$ is an increasing function on $u>0$}\\
&\leq \frac{1}{4}\left[\psi\Bigl(\frac{d}{2} + \frac{1-j}{2}\Bigr)-\psi\Bigl(\frac{d-2}{2} + \frac{1-j}{2}\Bigr)\right]\textnormal{\quad since $\psi(u)$ is a concave function on $u>0$}\\
&= \frac{1}{4}\left[ \frac{1}{\frac{d-2}{2} + \frac{1-j}{2}}\right] = \frac{1}{2(d-1-j)},
\end{align*}
where we use the fact that $\psi(u+1)=\psi(u)+\frac{1}{u}$ holds for all $u>0$. Therefore, combining all our calculations,
\begin{multline*}
2\big(\lambda_{n,d}(P)\big)^2 \leq 
    \log \E_P[(\det(\bX\Sigma^{-1}\bX^\top))^{-1/2}] +\frac{1}{2} \E_P\left[\log(\det(\bX\Sigma^{-1}\bX^\top))\right]\\
    \leq\sum_{j=1}^n \frac{1}{2(d-1-j)} \leq \frac{n}{2(d-1-n)},
\end{multline*}
 which completes the proof.

\end{proof}

\subsection{Extensions for the linear model example: generalizing to other distributions} \label{app:linear-model-general-class}
We extend the results of Section~\ref{sec:linear}, to prove a bound on $\lambda_{n,d}(P)$ for a broader family of distributions.
\begin{proposition} \label{prop:lambda_extension}
	Let $P,Q$ be distributions on $\R^d\times \R$, such that $Q$ is absolutely continuous with respect to $P$. Assume $\frac{\mathsf{d}Q}{\mathsf{d}P}(x,y) \leq \varepsilon^{-1}$ for $P$-almost every $(x,y)$. Then, for all $n\geq 1$
	\begin{equation*}
		\lambda_{n,d}(Q) \leq \lambda_{\lceil2n/ \varepsilon\rceil,d }(P) + e^{-n/4}.
	\end{equation*}
\end{proposition}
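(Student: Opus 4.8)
The plan is to reduce the claim about $Q$ to a statement about $\lambda_{N,d}(P)$ with $N := \lceil 2n/\varepsilon\rceil$, by constructing a rejection-sampling device that turns i.i.d.\ draws from $P$ into i.i.d.\ draws from $Q$. Since $Q \ll P$ with $\frac{\mathsf{d}Q}{\mathsf{d}P} \le \varepsilon^{-1}$ holding $P$-almost surely (which forces $\varepsilon \le 1$, because $1 = \int \frac{\mathsf{d}Q}{\mathsf{d}P}\,\mathsf{d}P \le \varepsilon^{-1}$), the function $a(z) := \min\{\varepsilon\,\tfrac{\mathsf{d}Q}{\mathsf{d}P}(z),\,1\}$ takes values in $[0,1]$ and equals $\varepsilon\,\tfrac{\mathsf{d}Q}{\mathsf{d}P}(z)$ for $P$-almost every $z$. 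I would introduce a randomized map (Markov kernel) $K:\Zcal^N \to \Zcal^n$ as follows: on input $(z_1,\dots,z_N)$, draw independent coins $C_i \sim \mathrm{Bernoulli}(a(z_i))$, set $\Acal = \{i : C_i = 1\}$, and output the $n$ points $z_i$ with the smallest indices in $\Acal$ if $|\Acal| \ge n$, and output $(z_1,\dots,z_1)$ ($n$ copies of the first coordinate) otherwise. Fixing $\delta > 0$, I would take $Q' \in \tilde{\Qcal}^{(N)}_{\mathrm{lin}}$ with $\dtv(P^N, Q') \le \lambda_{N,d}(P) + \delta$. The two facts to prove are (i)~$\dtv\bigl(Q^n, K(P^N)\bigr) \le e^{-n/4}$ and (ii)~$K(Q') \in \tilde{\Qcal}^{(n)}_{\mathrm{lin}}$; granting these, the data-processing inequality $\dtv\bigl(K(P^N), K(Q')\bigr) \le \dtv(P^N, Q')$ together with the triangle inequality gives $\lambda_{n,d}(Q) \le \dtv\bigl(Q^n, K(Q')\bigr) \le e^{-n/4} + \lambda_{N,d}(P) + \delta$, and letting $\delta \downarrow 0$ completes the proof.

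For (i): when the inputs $Z_1,\dots,Z_N$ are i.i.d.\ from $P$, the pairs $(Z_i,C_i)$ are i.i.d.\ with $\P(C_i = 1) = \varepsilon$ and conditional law $\mathrm{Law}(Z_i \mid C_i = 1) = Q$, since $a(z)\,\mathsf{d}P(z) = \varepsilon\,\mathsf{d}Q(z)$. A standard thinning argument then shows that, conditionally on $\{|\Acal| = m\}$, the $m$ accepted points are i.i.d.\ from $Q$ for every $m$, so $K(P^N)$ agrees with $Q^n$ on the event $\{|\Acal| \ge n\}$, and hence $\dtv\bigl(Q^n, K(P^N)\bigr) \le \P(|\Acal| < n)$. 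Since $|\Acal| \sim \mathrm{Binom}(N,\varepsilon)$ has mean $N\varepsilon \ge 2n$, a multiplicative Chernoff bound gives $\P(|\Acal| < n) \le \P(|\Acal| \le N\varepsilon/2) \le e^{-N\varepsilon/8} \le e^{-n/4}$.

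For (ii): I would write $Q' = \int P_0^N\,\mathsf{d}\pi(P_0)$ as a mixture over $P_0 \in \Qcal_{\mathrm{lin}}$; since $K$ acts linearly on its input distribution, $K(Q') = \int K(P_0^N)\,\mathsf{d}\pi(P_0)$, so it is enough to show $K(P_0^N) \in \tilde{\Qcal}^{(n)}_{\mathrm{lin}}$ for each such $P_0$. Fixing $P_0$ with $Y = X^\top\beta_0$ holding $P_0$-almost surely, the key point is that \emph{every} distribution absolutely continuous with respect to $P_0$ still satisfies $Y = X^\top\beta_0$ almost surely, hence lies in $\Qcal_{\mathrm{lin}}$ --- in particular the tilt $\tilde P_0 := \frac{a\cdot\mathsf{d}P_0}{\int a\,\mathsf{d}P_0}$ and the conditional law $\nu_0$ of $Z_1$ given $\{|\Acal| < n\}$. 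Repeating the thinning computation with $P_0$ in place of $P$, the output of $K(P_0^N)$ on $\{|\Acal| \ge n\}$ is $\tilde P_0^{\otimes n} \in \Qcal^{(n)}_{\mathrm{lin}}$ (this event being vacuous when $\int a\,\mathsf{d}P_0 = 0$), while on $\{|\Acal| < n\}$ it is $(Z_1,\dots,Z_1)$, with law $\int \delta_w^{\otimes n}\,\mathsf{d}\nu_0(w)$; as each $\delta_w$ (for $w$ in the support of $\nu_0$) is a degenerate member of $\Qcal_{\mathrm{lin}}$, this mixture lies in $\tilde{\Qcal}^{(n)}_{\mathrm{lin}}$. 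Hence $K(P_0^N)$ is a convex combination of two members of the convex set $\tilde{\Qcal}^{(n)}_{\mathrm{lin}}$, as required.

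I expect the main obstacle to be part (ii), and within it the treatment of the failure event $\{|\Acal| < n\}$: one cannot simply output the first $n$ \emph{input} points there, since conditioning on an event that involves all $N$ coins generally destroys the ``mixture of product measures'' structure needed for membership in $\tilde{\Qcal}^{(n)}_{\mathrm{lin}}$. Replicating a single coordinate circumvents this, because a point mass supported on the graph of a linear map is itself an element of $\Qcal_{\mathrm{lin}}$. This device is essential rather than cosmetic, since under $Q'$ the failure probability need not be small, so we genuinely need $K(Q')$ to land \emph{inside} the class rather than merely near it. The remaining ingredients --- the data-processing inequality for total variation, the thinning identity, and the Chernoff estimate --- are routine.
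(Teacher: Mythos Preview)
Your proof is correct and, while built on the same rejection-sampling idea as the paper, is executed differently. The paper proves a two-sided ``sandwich'' lemma comparing $\E_{Q^n}[f]$ to a weighted sum over accepted subsets, constructs the tilted distribution $Q(P_0)$ explicitly for each mixture component $P_0$, and then bounds $Q^n(A)-Q_*(A)$ set by set; this forces an auxiliary assumption $\tfrac{\mathsf{d}Q}{\mathsf{d}P}>0$ everywhere (so that the normalizing constant $\E_{P_0}[\tfrac{\mathsf{d}Q}{\mathsf{d}P}]$ never vanishes), which is then removed at the end by a separate perturbation argument $Q_\delta=(1-\delta)Q+\delta P$. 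Your route is more streamlined: by encoding the whole procedure as a single Markov kernel $K$ and invoking the data-processing inequality, you reduce the problem to checking membership of $K(Q')$ in $\tilde{\Qcal}^{(n)}_{\mathrm{lin}}$, and your device of outputting $(z_1,\dots,z_1)$ on the failure event---exploiting that point masses on the graph of a linear map lie in $\Qcal_{\mathrm{lin}}$---handles the degenerate case $\int a\,\mathsf{d}P_0=0$ automatically, so no positivity assumption or perturbation step is needed. Both approaches yield the same Chernoff term $e^{-n/4}$; yours trades the paper's explicit expectation calculations for a cleaner abstract argument.
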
 
For instance, if $P$ is a distribution with a Gaussian marginal $P_X$, then we know that $\lambda_{\lceil2n/ \varepsilon\rceil,d }(P)\approx 0$ (as long as $d\gg n/ \varepsilon$), by Corollary~\ref{cor:Gaussian-error}. This means that $\lambda_{n,d}(Q)$ will also be small for any distribution $Q$ with sufficiently light tails, and therefore,  Theorem~\ref{thm:linear_example} shows that a nontrivial lower bound is not possible for the model class risk $R_Q(\Flin^{(d)})$.

In order to prove this result, first we need a lemma on rejection sampling.
\begin{lemma}\label{lem:rejection_sampling}
    Let $P,Q$ be distributions on $\Zcal$, such that $Q$ is absolutely continuous with respect to $P$. Assume $\frac{\mathsf{d}Q}{\mathsf{d}P}(z) \leq \varepsilon^{-1}$ for $P$-almost every $z$. 
    
    Let $\tilde{P}$ denote the distribution on $(Z,B)\in\Zcal\times\{0,1\}$ constructed by sampling $Z\sim P$, then sampling $B\mid Z\sim\textnormal{Bernoulli}(\varepsilon\cdot\frac{\mathsf{d}Q}{\mathsf{d}P}(Z) )$.  
    Then, for any $N\geq n\geq 1$, and for any function $f:\Zcal^n\to[0,1]$,
    \[0\leq \E_{Q^n}[f] - \E_{\tilde{P}^N}\left[ \sum_{1\leq i_1<\dots<i_n\leq N} f(Z_{i_1},\dots,Z_{i_n}) \cdot \frac{\one_{B_{i_1} = \dots = B_{i_n}=1}}{1\vee {\sum_i B_i\choose n}} \right] \\{}\leq \P\{\textnormal{Binom}(N,\varepsilon)<n\}.\]
\end{lemma}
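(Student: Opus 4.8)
The plan is to recognize the construction of $\tilde{P}$ as a rejection-sampling scheme with proposal $P$ and target $Q$, and then to compute the expectation of the stated estimator by conditioning on the acceptance pattern $(B_1,\dots,B_N)$.

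First I would record the elementary facts about $\tilde{P}$. Since $\frac{\mathsf{d}Q}{\mathsf{d}P}(z)\le\varepsilon^{-1}$ for $P$-almost every $z$, the quantity $\varepsilon\cdot\frac{\mathsf{d}Q}{\mathsf{d}P}(z)$ lies in $[0,1]$, so the Bernoulli draw is well-defined and $\tilde{P}$ makes sense. Computing the joint law of $(Z,B)$ gives $\tilde{P}(\mathsf{d}z,\,B=1)=\varepsilon\,Q(\mathsf{d}z)$, hence $\tilde{P}(B=1)=\varepsilon$ and the conditional law of $Z$ given $B=1$ is exactly $Q$. Drawing $(Z_1,B_1),\dots,(Z_N,B_N)\iidsim\tilde{P}$ and conditioning on $(B_1,\dots,B_N)$, the $Z_i$ are conditionally independent, and the ``accepted'' coordinates (those with $B_i=1$) are i.i.d.\ from $Q$; moreover $M:=\sum_i B_i\sim\mathrm{Binom}(N,\varepsilon)$.

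Next I would evaluate $\E_{\tilde{P}^N}$ of the estimator, call it $T$, by conditioning on $(B_1,\dots,B_N)$. On the event $\{M<n\}$, no $n$-element index subset is entirely accepted, so every summand of $T$ contains a factor $\one_{B_{i_1}=\dots=B_{i_n}=1}=0$ and thus $T=0$. On $\{M\ge n\}$ we have $1\vee\binom{M}{n}=\binom{M}{n}$, and the sum has exactly $\binom{M}{n}$ nonzero summands, one for each $n$-element subset of the accepted indices; for any such fixed tuple of positions, the corresponding $Z$ values are i.i.d.\ $Q$ given $(B_1,\dots,B_N)$, so each summand has conditional expectation $\E_{Q^n}[f]$. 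This is the one place requiring a little care, since $f$ need not be symmetric, but exchangeability of the accepted coordinates ensures the value of each fixed-position $n$-tuple is distributed as $Q^n$ regardless of order. Hence $\E[T\mid B_1,\dots,B_N]=\one\{M\ge n\}\cdot\E_{Q^n}[f]$, and averaging over the acceptance pattern gives $\E_{\tilde{P}^N}[T]=\P\{M\ge n\}\cdot\E_{Q^n}[f]$.

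Finally, subtracting, $\E_{Q^n}[f]-\E_{\tilde{P}^N}[T]=\P\{M<n\}\cdot\E_{Q^n}[f]=\P\{\mathrm{Binom}(N,\varepsilon)<n\}\cdot\E_{Q^n}[f]$, and since $f$ takes values in $[0,1]$ we have $\E_{Q^n}[f]\in[0,1]$, which delivers both the asserted nonnegativity and the upper bound. The only genuine subtlety is the conditioning bookkeeping in the middle step---in particular checking that the normalization $1\vee\binom{M}{n}$ exactly cancels the number of accepted $n$-tuples on $\{M\ge n\}$ and that the estimator is identically zero on $\{M<n\}$; the remaining computations are routine.
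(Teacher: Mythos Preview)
Your proposal is correct and follows essentially the same route as the paper: condition on $(B_1,\dots,B_N)$, use that the accepted $Z_i$'s are conditionally i.i.d.\ from $Q$ so each accepted $n$-tuple has conditional mean $\E_{Q^n}[f]$, observe that the normalization $1\vee\binom{M}{n}$ exactly cancels the number of accepted tuples on $\{M\ge n\}$ while the estimator vanishes on $\{M<n\}$, and marginalize to get $\E_{\tilde P^N}[T]=\E_{Q^n}[f]\cdot\P\{M\ge n\}$. Your extra remark on why the lack of symmetry of $f$ is harmless is a nice touch that the paper leaves implicit.
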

To understand this lemma, we can interpret it as a result about rejection sampling. To draw one sample from the distribution of $f(Z_1,\dots,Z_n)$ under $Q^n$, we do the following:
\begin{itemize}
    \item We draw $N$ samples from $P$, given by $Z_1,\dots,Z_N$, and draw the Bernoulli random variables $B_1,\dots,B_N$ to perform rejection sampling.
    \item The ``accepted'' draws---that is, all $Z_i$ for which $B_i= 1$---can be viewed as a random sample drawn i.i.d.\ from $Q$. If $\sum_{i=1}^N B_i \geq n$, then, we can choose a random subset of $n$ of these accepted draws (indices $1\leq i_1<\dots<i_n\leq N$), and evaluate $f(Z_{i_1},\dots,Z_{i_n})$. This is a draw from the target distribution.
    \item However, on the event that $\sum_{i=1}^N B_i < n$ (which is highly unlikely if we choose $N$ to be sufficiently large), we do not have sufficiently many accepted samples; instead we might simply return $0$ since no estimate is available.
\end{itemize}
With this intuition in place, the proof is straightforward.
\begin{proof}[Proof of Lemma~\ref{lem:rejection_sampling}]
    First we condition on $B_1,\dots,B_N$. On the event that $\sum_{i=1}^N B_i \geq n$, by the argument described above, for any indices $1\leq i_1<\dots<i_n\leq N$ such that $B_{i_1}=\dots=B_{i_n}=1$, it holds that $f_{i_1,\dots,i_n} := f(Z_{i_1}, \ldots, Z_{i_n})$ is therefore a draw from the target distribution---that is,
    \[\E_{\tilde{P}^N}\left[f_{i_1,\dots,i_n} \mid B_{i_1} = \dots = B_{i_n}=1\right] = \E_{Q^n}[f].\]
    Then
    \[\E_{\tilde{P}^N}\left[f_{i_1,\dots,i_n} \cdot\one_{B_{i_1} = \dots = B_{i_n}=1} \mid B_1,\dots,B_N\right] = \E_{Q^n}[f] \cdot\one_{B_{i_1} = \dots = B_{i_n}=1},\]
    and so averaging over all possible collections of indices, we then have
    \[\frac{\E_{\tilde{P}^N}\left[\sum_{1\leq i_1<\dots <i_n\leq N} f_{i_1,\dots,i_n} \cdot\one_{B_{i_1} = \dots = B_{i_n}=1}\,\middle|\, B_1,\dots,B_N\right]}{\sum_{1\leq i_1<\dots <i_n\leq N}\one_{B_{i_1} = \dots = B_{i_n}=1}} = \E_{Q^n}[f],\]
    on the event that the sum in the denominator is positive. Note that this denominator is equal to ${\sum_i B_i\choose n}$, which is nonzero if and only if $\sum_i B_i \geq n$, so equivalently we have
    \[\frac{\E_{\tilde{P}^N}\left[\sum_{1\leq i_1<\dots <i_n\leq N} f_{i_1,\dots,i_n} \cdot\one_{B_{i_1} = \dots = B_{i_n}=1}\,\middle|\, B_1,\dots,B_N\right]}{{\sum_i B_i\choose n}} = \E_{Q^n}[f],\]
    on the event that $\sum_i B_i \geq n$. To cover both cases, then, we have
    \[\frac{\E_{\tilde{P}^N}\left[\sum_{1\leq i_1<\dots <i_n\leq N} f_{i_1,\dots,i_n} \cdot\one_{B_{i_1} = \dots = B_{i_n}=1}\,\middle|\, B_1,\dots,B_N\right]}{1 \vee {\sum_i B_i\choose n}} = \E_{Q^n}[f] \cdot\one\left\{\sum_i B_i \geq n\right\}.\]
    Marginalizing over the $B_i$'s, then,
    \[\E_{\tilde{P}^N}\left[\frac{\sum_{1\leq i_1<\dots <i_n\leq N} f_{i_1,\dots,i_n} \cdot\one_{B_{i_1} = \dots = B_{i_n}=1}}{1 \vee {\sum_i B_i\choose n}}\right] = \E_{Q^n}[f] \cdot\P_{\tilde{P}^N}\left\{\sum_i B_i \geq n\right\}.\]
    Therefore,
    \begin{multline*}\E_{Q^n}[f] - \E_{\tilde{P}^N}\left[\frac{\sum_{1\leq i_1<\dots <i_n\leq N} f_{i_1,\dots,i_n} \cdot\one_{B_{i_1} = \dots = B_{i_n}=1}}{1 \vee {\sum_i B_i\choose n}}\right]\\ = \E_{Q^n}[f] \cdot\P_{\tilde{P}^N}\left\{\sum_i B_i < n\right\}  = \E_{Q^n}[f]\cdot \P\{\textnormal{Binom}(N,\varepsilon)<n\}\\\in \Big[0,\P\{\textnormal{Binom}(N,\varepsilon)<n\}\Big].\end{multline*}

\end{proof}

With this lemma in place, we are now ready to prove the extension.
\begin{proof}[Proof of Proposition~\ref{prop:lambda_extension}]
First we will prove the result in the case where $\frac{\mathsf{d}Q}{\mathsf{d}P}(x,y)>0$ for all $(x,y)\in\R^d\times\R$.

Fix any $N\geq n$ and any $\delta > 0$. By the definition of $\lambda_{N,d}(P)$, there exists a distribution $P_*\in \tilde{\Qcal}_{\textnormal{lin}}^{(N)}$, such that 
    \[
        \dtv( P^N, P_* ) \leq \lambda_{N,d}( P ) + \delta.
    \]
    By definition of $\tilde{\Qcal}_{\textnormal{lin}}^{(N)}$, the distribution $P_*$ is equal to a mixture of distributions in $\Qcal_{\textnormal{lin}}^{(N)}$, i.e., there exists a probability measure $\nu$ on the space of distributions $\Qcal_{\textnormal{lin}}$, such that sampling from $P_*$ is the same as sampling from $(P_0)^N$ after sampling $P_0 \sim \nu$.

For any $P_0\sim \nu$, define a corresponding distribution $Q(P_0)$ given by \[\frac{\mathsf{d}Q(P_0)}{\mathsf{d}P_0}(x,y) = \frac{\frac{\mathsf{d}Q}{\mathsf{d}P}(x,y)}{\E_{(X,Y)\sim P_0}\left[\frac{\mathsf{d}Q}{\mathsf{d}P}(X,Y)\right]}.\]
Since $P_0\in \Qcal_{\mathrm{lin}}$ and $Q(P_0)$ is absolutely continuous with respect to $P_0$, it follows that $Q(P_0)\in\Qcal_{\mathrm{lin}}$ as well (i.e., since there exists some $\beta\in\R^d$ such that $Y=X^\top\beta$ holds $P_0$-almost surely, we also have that $Y=X^\top\beta$ holds $Q(P_0)$-almost surely). Consequently, we can define $Q_*\in\tilde{\Qcal}_{\mathrm{lin}}^{(n)}$ as the mixture obtained by sampling $P_0\sim \nu$ and then returning $(Q(P_0))^n$.

Our next step is to bound $\dtv(Q^n,Q_*)$. Fix any $A\subseteq(\R^d\times\R)^n$. 
By Lemma~\ref{lem:rejection_sampling} (applied with $f = \one_A$ and $Z=(X,Y)$),
\begin{multline*}Q^n(A) \leq \E_{\tilde{P}^N}\left[\frac{\sum_{1\leq i_1<\dots <i_n\leq N} \one_{((X_{i_1},Y_{i_1}),\dots,(X_{i_n},Y_{i_n}))\in A} \cdot\one_{B_{i_1} = \dots = B_{i_n}=1}}{1 \vee {\sum_i B_i\choose n}}\right]\\ {}+ \P\{\textnormal{Binom}(N,\varepsilon)<n\},\end{multline*}
where $\tilde{P}$ is the distribution on $(X,Y,B)\in\R^d\times\R\times\{0,1\}$ defined as in the statement of the lemma.
Next, for any $P_0\sim\nu$, define $\tilde{P}_0$ as the distribution on $(X,Y,B)\in\R^d\times\R\times\{0,1\}$ obtained by sampling $(X,Y)\sim P_0$, then \[B\mid (X,Y)\sim \textnormal{Bernoulli}\left(\varepsilon\cdot \frac{\mathsf{d}Q}{\mathsf{d}P}(X,Y)\right) = \textnormal{Bernoulli}\left(\varepsilon_{P_0} \cdot \frac{\mathsf{d}Q(P_0)}{\mathsf{d}P_0}(X,Y)\right) ,\] 
where $\varepsilon_{P_0} = \varepsilon\cdot \E_{(X,Y)\sim P_0}\left[\frac{\mathsf{d}Q}{\mathsf{d}P}(X,Y)\right]$. Again applying Lemma~\ref{lem:rejection_sampling},
\[(Q(P_0))^n(A) \geq \E_{(\tilde{P}_0)^N}\left[ \frac{\sum_{1\leq i_1<\dots <i_n\leq N} \one_{((X_{i_1},Y_{i_1}),\dots,(X_{i_n},Y_{i_n}))\in A} \cdot\one_{B_{i_1} = \dots = B_{i_n}=1}}{1 \vee {\sum_i B_i\choose n}} \right].\]
Therefore,
\begin{multline*}Q_*(A) = \E_{P_0\sim \nu}[(Q(P_0))^n(A)]
\\\geq\E_{P_0\sim\nu}\left[\E_{(\tilde{P}_0)^N}\left[ \frac{\sum_{1\leq i_1<\dots <i_n\leq N} \one_{((X_{i_1},Y_{i_1}),\dots,(X_{i_n},Y_{i_n}))\in A} \cdot\one_{B_{i_1} = \dots = B_{i_n}=1}}{1 \vee {\sum_i B_i\choose n}} \right]\right]. \end{multline*}
Let $\tilde{P}_*$ be the distribution on $(\R^d\times\R\times\{0,1\})^N$ obtained as follows: sample $P_0 \sim \nu$, then return a draw from $(\tilde{P}_0)^N$. Then we equivalently have
\[Q_*(A) \geq \E_{\tilde{P}_*}\left[\frac{\sum_{1\leq i_1<\dots <i_n\leq N} \one_{((X_{i_1},Y_{i_1}),\dots,(X_{i_n},Y_{i_n}))\in A} \cdot\one_{B_{i_1} = \dots = B_{i_n}=1}}{1 \vee {\sum_i B_i\choose n}} \right].\]
Since the quantity inside the expected value must always lie in $[0,1]$, it therefore holds that
\[Q_*(A) \geq \E_{\tilde{P}^N}\left[\frac{\sum_{1\leq i_1<\dots <i_n\leq N} \one_{((X_{i_1},Y_{i_1}),\dots,(X_{i_n},Y_{i_n}))\in A} \cdot\one_{B_{i_1} = \dots = B_{i_n}=1}}{1 \vee {\sum_i B_i\choose n}} \right] -\dtv(\tilde{P}^N,\tilde{P}_*).\]

Combining everything, then,
\[Q^n(A) \leq Q_*(A) + \dtv(\tilde{P}^N,\tilde{P}_*) + \P\{\textnormal{Binom}(N,\varepsilon)<n\}.\]
Since this holds for all $A\subseteq(\R^d\times\R)^n$, and since $Q_*\in\tilde{\Qcal}_{\textnormal{lin}}^{(n)}$, we therefore have
\[\lambda_{n,d}(Q) \leq \dtv(Q^n,Q_*) = \sup_{A\subseteq(\R^d\times\R)^n}\{Q^n(A)-Q_*(A)\}\leq \dtv(\tilde{P}^N,\tilde{P}_*) + \P\{\textnormal{Binom}(N,\varepsilon)<n\}.\]
Next, by construction, for both $\tilde{P}^N$ and $\tilde{P}_*$, the conditional distribution of 
$(B_1,\dots,B_N)$ given $(X_1,Y_1),\dots,(X_N,Y_N)$ is equal to
\[\textnormal{Bernoulli}\left(\varepsilon\cdot \frac{\mathsf{d}Q}{\mathsf{d}P}(X_1,Y_1)\right)\times \dots \times \textnormal{Bernoulli}\left(\varepsilon\cdot \frac{\mathsf{d}Q}{\mathsf{d}P}(X_N,Y_N)\right).\]
In other words, the total variation distance between $\tilde{P}^N$ and $\tilde{P}_*$ is solely due to the difference in marginal distributions over $((X_1,Y_1),\dots,(X_N,Y_N))$, i.e.,
\[\dtv(\tilde{P}^N,\tilde{P}_*) = \dtv(P^N,P_*).\]
By our choice of $P_*$, we therefore have $\dtv(\tilde{P}^N,\tilde{P}_*)= \dtv(P^N,P_*)\leq \lambda_{N,d}(P) + \delta$, and so
\[\lambda_{n,d}(Q) \leq \lambda_{N,d}(P) + \delta + \P\{\textnormal{Binom}(N,\varepsilon)<n\}.\]
Finally, 
taking $N = \lceil 2n /\varepsilon\rceil$, and applying a Chernoff bound \citep[Theorem~2.3(c)]{mcdiarmid1998concentration}, we have
\begin{equation*}
	\P\{\textnormal{Binom}(N,\varepsilon) < n\} \leq \exp\{-\varepsilon N/8\} \leq e^{-n/4},
\end{equation*}
and thus
\[\lambda_{n,d}(Q) \leq \lambda_{N,d}(P) + \delta + e^{-n/4}.\]
Since $\delta>0$ can be taken to be arbitrarily small, this completes the proof for the case that $\frac{\mathsf{d}Q}{\mathsf{d}P}(x,y)>0$ for all $(x,y)$.

Next we turn to the general case. Fix some small $c\in(0,1)$, and define $Q_c = (1-c) Q+ c P$. Then
\[\frac{\mathsf{d}Q_c}{\mathsf{d}P}(x,y) = (1-c)\cdot \frac{\mathsf{d}Q}{\mathsf{d}P}(x,y) + c \in (0,\varepsilon^{-1}],\]
so we can apply the result of the proposition to this perturbed distribution to obtain
\[\lambda_{n,d}(Q_c) \leq \lambda_{N,d}(P) + e^{-n/4}.\]
But by definition of $\lambda_{n,d}$, we have
\[\lambda_{n,d}(Q) \leq \lambda_{n,d}(Q_c) + \dtv(Q^n, (Q_c)^n) \leq \lambda_{n,d}(Q_c) +  nc.\]
Taking $c\in(0,1)$ to be arbitrarily small, we have completed the proof for the general case.

\end{proof}

\subsection{Extensions for the linear model example: a tighter bound for the low-dimensional case}
In this section, we construct a more informative lower bound for $R_P(\Flin^{(d)})$ by applying the truncated loss construction of Theorem~\ref{thm:low_complexity_trunc_loss}, and show that in the low-dimensional setting ($d<n$), this lower bound provides an accurate estimate of the model class risk $R_P(\Flin^{(d)})$.

\begin{theorem}\label{thm:linear_example_extension_trunc_loss}
    Fix $\alpha \in (0,1)$, and $n \geq d \geq 1$. Let $\alpha_0 + \alpha_1 = \alpha$, with $\alpha_0, \alpha_1 > 0$. 
    Define the valid lower bound $\hat{L}_\alpha^{\mathrm{ERM-trunc},B}(\Flin^{(d)},\cdot)$ as in Theorem~\ref{thm:low_complexity_trunc_loss}.
    Then, for any data set $\sample_n = ((X_1,Y_1),\dots,(X_n,Y_n))\in(\R^d\times\R)^n$, if it holds that
    \begin{equation}\label{eqn:concentration_for_lin_extension}\frac{1}{n}\sum_{i=1}^n Y_i^4\leq \gamma,  \textnormal{  \ and for all $b\in\R^d$, \ } \frac{1}{n}\sum_{i=1}^n (X_i^\top b)^2\geq \lambda_0\|b\|^2_2, \ \frac{1}{n}\sum_{i=1}^n (X_i^\top b)^4\leq \lambda_1\|b\|^4_2,\end{equation}
    for some $\gamma ,\lambda_0, \lambda_1 > 0$, then
    \[\hat{L}_\alpha^{\mathrm{ERM-trunc},B}(\Flin^{(d)},\sample_n)
    \geq \frac{1}{n}\|\mathcal{P}_{\bX}^\perp(\bY)\|^2_2 - \sqrt{\frac{2B^2\log(1/\alpha)}{n}} - \frac{c}{B},\]
    where $\mathcal{P}_{\bX}^\perp$ denotes projection to the orthogonal complement of the span of the columns of $\bX$, and where the constant $c$ depends only on $\gamma,\lambda_0,\lambda_1$.
\end{theorem}
The condition~\eqref{eqn:concentration_for_lin_extension} will hold with high probability under standard mild assumptions on the distribution of the data, via the usual concentration arguments (including matrix concentration arguments for the minimum eigenvalue---see, e.g., \citet{tropp2012user}).

As an illustrative example, we can consider the setting where $P$ follows a linear model, $Y_i = X_i^\top\beta^* + \zeta_i$, for some fixed true coefficient vector $\beta^*$, and i.i.d.\ noise $\zeta_i$ with mean zero and variance $\sigma^2$. In this case, we have $R_P(\Flin^{(d)}) = \sigma^2$. And, by standard arguments, when $\bX$ has full column rank, then \[\E_P\left[\frac{1}{n}\|\mathcal{P}_{\bX}^\perp(\bY)\|^2_2\right] = \left(1-\frac{d}{n}\right)\sigma^2 = \left(1-\frac{d}{n}\right)R_P(\Flin^{(d)}).\]
In particular, if $d\ll n$, and if we choose the truncation parameter $B$ to satisfy $1\ll B \ll \sqrt{n}$, this shows that the valid lower bound $\hat{L}_\alpha^{\mathrm{ERM-trunc},B}(\Flin^{(d)},\sample_n)$ is in fact an accurate estimator of the true risk $R_P(\Flin^{(d)})$. 
\begin{proof}[Proof of Theorem~\ref{thm:linear_example_extension_trunc_loss}]
    Applying~\eqref{ineq:easier-lower-bound-trunc}, we have
    \[\hat{L}_\alpha^{\mathrm{ERM-trunc},B}(\Flin^{(d)},\sample_n)  \geq \hat{R}(\Flin^{(d)},\sample_n;B)- \sqrt{\frac{2B^2\log(1/\alpha)}{n}}.\]
    Consequently, from this point on, we only need to show that
    $\hat{R}(\Flin^{(d)},\sample_n;B) \geq\frac{1}{n}\|\mathcal{P}_{\bX}^\perp(\bY)\|_2^2 - \frac{c}{B}$
    holds under the assumption~\eqref{eqn:concentration_for_lin_extension}.

    Consider any function of the form $f(x)=x^\top\beta$, for $\beta\in\R^d$. First we consider the case $\|\beta\|^2_2>\frac{4\sqrt{\gamma}}{\lambda_0}$. Define the unit vector $u = \beta/\|\beta\|_2$. Then
    \begin{align*}
        &\hat{R}(f,\sample_n;B)
        =\frac{1}{n}\sum_{i=1}^n \min\{(Y_i - X_i^\top \beta)^2, B\}\\
        &\geq \frac{1}{n}\sum_{i=1}^n \min\left\{\frac{1}{2}(X_i^\top\beta)^2 - Y_i^2, B\right\}\textnormal{\quad since $(a+b)^2\leq 2a^2+2b^2$ for all $a,b\in\R$}\\
        &\geq \frac{1}{n}\sum_{i=1}^n \left(\min\left\{\frac{1}{2}(X_i^\top\beta)^2,B\right\} - Y_i^2\right)\\
        &\geq  - \frac{1}{n}\sum_{i=1}^n Y_i^2+ \frac{1}{n}\sum_{i=1}^n \min\left\{\frac{2\sqrt{\gamma}}{\lambda_0}(X_i^\top u)^2,B\right\} \textnormal{\quad since $\|\beta\|^2_2 > \frac{4\sqrt{\gamma}}{\lambda_0}$}\\
      &\geq - \frac{1}{n}\sum_{i=1}^n Y_i^2 + \frac{1}{n}\sum_{i=1}^n \frac{2\sqrt{\gamma}}{\lambda_0}(X_i^\top u)^2 - \frac{1}{n}\sum_{i=1}^n  \frac{2\sqrt{\gamma}}{\lambda_0}(X_i^\top u)^2 \cdot \one\left\{\frac{2\sqrt{\gamma}}{\lambda_0}(X_i^\top u)^2 \geq B\right\} \\
        &\geq  - \frac{1}{n}\sum_{i=1}^n Y_i^2 + \frac{1}{n}\sum_{i=1}^n \frac{2\sqrt{\gamma}}{\lambda_0}(X_i^\top u)^2  - \frac{1}{n}\sum_{i=1}^n \frac{1}{B} \left[\frac{2\sqrt{\gamma}}{\lambda_0}(X_i^\top u)^2\right]^2\\
        &\geq  - \frac{1}{n}\sum_{i=1}^n Y_i^2 + 2\sqrt{\gamma} - \frac{4\gamma\lambda_1}{B\lambda_0^2}\textnormal{\quad by~\eqref{eqn:concentration_for_lin_extension}}\\
        &\geq \frac{1}{n}\sum_{i=1}^n Y_i^2- \frac{4\gamma\lambda_1}{B\lambda_0^2} \textnormal{\quad since $\frac{1}{n}\sum_{i=1}^nY_i^2\leq \sqrt{\frac{1}{n}\sum_{i=1}^nY_i^4}\leq \sqrt{\gamma}$ by~\eqref{eqn:concentration_for_lin_extension}}\\
        &\geq \frac{1}{n}\|\mathcal{P}_{\bX}^\perp(\bY)\|^2_2- \frac{4\gamma\lambda_1}{B\lambda_0^2}\textnormal{\quad since $\sum_{i=1}^n Y_i^2 = \|\bY\|^2_2 \geq \|\mathcal{P}_{\bX}^\perp(\bY)\|^2_2$.}
    \end{align*}

    Next consider the case $\|\beta\|^2_2\leq \frac{4\sqrt{\gamma}}{\lambda_0}$. Then
    \begin{align*}
        &\hat{R}(f,\sample_n;B)
        =\frac{1}{n}\sum_{i=1}^n \min\{(Y_i - X_i^\top \beta)^2, B\}\\
        &\geq \frac{1}{n}\sum_{i=1}^n (Y_i - X_i^\top \beta)^2 - \frac{1}{n}\sum_{i=1}^n (Y_i - X_i^\top \beta)^2 \cdot\one\{(Y_i - X_i^\top \beta)^2\geq B\}\\
        &\geq \frac{1}{n}\|\mathcal{P}_{\bX}^\perp(Y)\|^2_2 - \frac{1}{n}\sum_{i=1}^n (Y_i - X_i^\top \beta)^2 \cdot\one\{(Y_i - X_i^\top \beta)^2\geq B\}\\
        &\geq \frac{1}{n}\|\mathcal{P}_{\bX}^\perp(Y)\|^2_2 - \frac{1}{n}\sum_{i=1}^n \frac{1}{B}(Y_i - X_i^\top \beta)^4\\
        &\geq \frac{1}{n}\|\mathcal{P}_{\bX}^\perp(Y)\|^2_2 - \frac{1}{n}\sum_{i=1}^n \frac{1}{B}\left(8Y_i^4 + 8(X_i^\top\beta)^4\right)\textnormal{\quad since $(a+b)^4\leq 8a^4+8b^4$ for all $a,b\in\R$}\\
        &\geq \frac{1}{n}\|\mathcal{P}_{\bX}^\perp(Y)\|^2_2 - \frac{8(\gamma+\lambda_1 (\frac{4\sqrt{\gamma}}{\lambda_0})^2)}{B}\textnormal{\quad by~\eqref{eqn:concentration_for_lin_extension}}.
    \end{align*}
    Therefore, combining both cases, we have
    \[\hat{R}(\Flin^{(d)},\sample_n;B) 
    \geq \min\left\{ \frac{1}{n}\|\mathcal{P}_{\bX}^\perp(Y)\|^2_2 - \frac{8(\gamma+\lambda_1 (\frac{4\sqrt{\gamma}}{\lambda_0})^2)}{B},\frac{1}{n}\|\mathcal{P}_{\bX}^\perp(\bY)\|^2_2- \frac{4\gamma\lambda_1}{B\lambda_0^2}\right\}.
    \]
    Choosing the constant $c$ appropriately completes the proof.
\end{proof}



\end{document}